\newcommand{\Proj}{\textnormal{Proj}\,}
\newcommand{\Spec}{\textnormal{Spec}\,}
\newcommand{\sat}{{\textnormal{sat}}}
\newcommand{\plane}{\mathcal{H}}
\newcommand{\St}{\mathrm{St}}
\newcommand{\supp}{\mathrm{supp}}
\newcommand{\Ht}{\mathrm{Ht}}
\newcommand{\cN}{\mathcal{N}\,}
\newcommand{\Nf}{\mathrm{Nf}}
\newcommand{\In}{\mathrm{in}}
\newcommand{\F}{\underline{\mathrm{L}}_{Y,J_{\geq m}}^{p(t)}}
\numberwithin{equation}{section}
\newtheorem{lemma}{Lemma}[section]
\newtheorem{theorem}[lemma]{Theorem}
\newtheorem{corollary}[lemma]{Corollary}
\newtheorem{proposition}[lemma]{Proposition}
\theoremstyle{definition}
\newtheorem{remark}[lemma]{Remark}
\newtheorem{example}[lemma]{Example}
\newtheorem{definition}[lemma]{Definition}
\DeclareMathAlphabet{\mathpzc}{OT1}{pzc}{m}{it}
\newcommand{\Aox}{A[\mathbf x,x_n]}
\begin{document}

\title{Functors of liftings of projective schemes}

\author[C. Bertone]{Cristina Bertone}
\email{cristina.bertone@unito.it}
\address{Dip. di Matematica, Universit\`a di Torino, Torino, Italy}

\author[F. Cioffi]{Francesca Cioffi}
\email{cioffifr@unina.it}
\address{Dip. di Matematica e Applicazioni, Universit\`a degli Studi di Napoli Federico II, Napoli, Italy}

\author[D. Franco]{Davide Franco}
\email{davide.franco@unina.it}
\address{Dip. di Matematica e Applicazioni, Universit\`a degli Studi di Napoli Federico II, Napoli, Italy}

\begin{abstract}
A classical approach to investigate a closed projective scheme $W$ consists of  
considering a ge\-ne\-ral hyperplane section of $W$, which  inherits many properties of $W$. The inverse problem that consists in finding a scheme $W$ starting from a possible hyperplane section $Y$ is called a {\em lifting problem}, and every such scheme $W$ is called a {\em lifting} of $Y$.  
Investigations in this topic can produce methods to obtain  schemes with specific properties. For example, any smooth point for $Y$ is smooth also for $W$.

We characterize all the liftings of $Y$ with a given Hilbert polynomial by a parameter scheme that is obtained by gluing suitable affine open subschemes in a Hilbert scheme and is described  through the functor it represents. 
We use constructive methods  from Gr\"obner and marked bases theories. Furthermore, by classical tools we obtain an analogous result for equidimensional liftings. Examples of explicit computations are provided.
\end{abstract}

\keywords{lifting, Gr\"obner basis, marked basis, equidimensionality}

\subjclass[2010]{14C05, 13P10, 14Q15}

\maketitle


\section*{Introduction}

Let $K$ be an infinite field, $A$ be a Noetherian $K$-algebra and $\mathbb P^n_A$ the $n$-dimensional projective space over $A$. A classical approach to investigate a closed projective scheme $W$ consists of 
considering a general hyperplane section of $W$, because many properties of $W$  
are preserved under general hyperplane sections and can be easier recognized in subschemes of lower dimension. 

The inverse problem that consists in finding a scheme $W$ starting from a possible hyperplane section is called a \emph{lifting problem} and investigations in this topic can produce methods to obtain affine or projective schemes with specific properties. 

To address the question, we recall that in \citep{Macaulay} Macaulay lifted monomial ideals to obtain sets of points with a given Hilbert functions, in \citep{H66} Hartshorne proved the connectedness of a Hilbert scheme by lifting Borel ideals and in \citep{RA} Reeves computed the radius of a Hilbert scheme with an analogous procedure (distractions). Moreover, by $t$-liftings and pseudo-liftings in \citep{MiNa} Migliore and Nagel obtained special configurations of linear varieties (stick figures). Much interest has been in particular given to the study of the $x_n$-liftings, which can be defined in terms of ideals (see \citep{GGR, Roitman}) or equivalently in terms of $K$-algebras, like proposed by Grothendieck (see \citep{BuchEis,Roitman} and the references therein). 
Starting from the papers \citep{Laudal} and \citep{Strano}, many authors also give significant contributions to find conditions on some invariants of a variety $W$ so that a degree $d$ hypersurface containing a hyperplane section of $W$ lifts to a hypersurface containing $W$.

In this paper, we consider the following lifting problem: given a closed subscheme $Y\subset \mathbb P^{n-1}_K$, explicitly describe all closed subschemes $W\subset \mathbb P^n_A$ such that $Y$ is a general hyperplane section of $W$, up to an extension of scalars. 
Every such scheme $W$ is called a {\em lifting} of $Y$ over $A$ (Definition \ref{def:geometric lifting}) and the saturated defining ideal $I$ of $W$ is called a {\em lifting} of the saturated defining ideal $I'$ of $Y$ (see Definition \ref{def:geometric lifting per gli ideali} and Proposition \ref{prop:h generico}). By the definition, we almost suddenly obtain that $W$ is smooth at every point at which $Y$ is smooth (see Proposition \ref{prop:smooth points}).

In order to have a better understanding of the issue we have in mind, let us consider a double point $P\in \mathbb P^n_K$. It is very well known that, for any negative integer $g$, there exist double lines $C_g\subset \mathbb P^3_K$ with arithmetic genus $g$ and having $P$ as general hyperplane section. So, even when $Y\subset \mathbb P^{n-1}_K$ is let to vary in a concrete quasi-projective scheme, we cannot hope that the liftings of $Y$ are parameterized 
by a quasi-projective scheme. In view of this we are led to set the Hilbert polynomial $p_Y(t)$ of $Y$ and look for a functor describing all the liftings of $Y$ with a given Hilbert polynomial $p(t)$ such that its first difference $\Delta p(t):=p(t)-p(t-1)$ 
is equal to $p_Y(t)$.

The framework of the present paper is both functorial and constructive. 
We use computational methods that are borrowed from Gr\"obner and marked bases theories and which involve quasi-stable ideals (see \cite{LR,LR2,BCR2}). 
Throughout the paper, a functor $F:{\text{\rm Noeth-}K\text{\rm -Alg}}\rightarrow {\mathrm{Sets}}$ will be said \emph{representable} if there is a scheme $X$ and a natural isomorphism between $F$ and the functor of points $h_X=\mathrm{Hom}( - ,X)$ applied on affine schemes.
If $X$ is an affine scheme, this definition coincides with the usual one in category theory (see, for instance, \cite[Definition 2.1]{Vistoli}).

Denoting by $\mathrm{Hilb}_{p(t)}^n$ the Hilbert scheme parameterizing all the subschemes of $\mathbb P^n_K$ with Hilbert polynomial $p(t)$, our main results, which are collected in Theorems \ref{th:schema che rappresenta} and \ref{th:funtore di punti}, can be summarized in the following way.
\vskip 1mm
\noindent{\bf Theorem A.} {\em  Let $p(t)$ be the Hilbert polynomial of a lifting of $Y$.
\begin{itemize}
\item[(i)] If $Y$ is equidimensional, then the family of the equidimensional liftings of $Y$ with Hilbert polynomial $p(t)$ is parameterized by a locally closed subscheme of $\mathrm{Hilb}_{p(t)}^n$.
\item[(ii)] The family of the liftings of $Y$ with Hilbert polynomial $p(t)$ is parameterized by a subscheme of $\mathrm{Hilb}_{p(t)}^n$ which can be explicitly constructed.
\end{itemize}
}
This paper has been motivated by the investigation of $x_n$-liftings of a homogeneous polynomial ideal (see Definition \ref{def:lifting}) that has been faced in \citep{BCGR} from a functorial point of view. However, the question treated here is different and presents new problems to be solved.
We now give a detailed outline of the contents of the paper.

In what follows, we consider $K[x_0,\dots,x_{n-1}]$ as a subring of $A[x_0,\dots,x_{n-1},x_n]$ and the variable $x_n$ is \emph{generic} for every ideal in $A[x_0,\dots,x_n]$ we take. 
This assumption allows us to exploit the behavior of Gr\"obner bases with respect to the degree reverse lexicographic order when we need the saturation of homogeneous ideals and of their initial ideals (see Remark \ref{rem:generic and degrevlex}). The study of $x_n$-lifting presented in \citep{BCGR} needed weaker hypotheses on the term order that was indeed chosen in a more general class of term orders (see Definition 4.4.1 in \citep{KR2}, and \citep{erdos} for a first generic classification of term orderings). 

Here, we only consider the degrevlex term order. In this setting, we recall the notion of Gr\"obner functor and Gr\"obner stratum (see Definition \ref{def:GrStratum}) 
together with the functor of $x_n$-liftings. Then, we prove that our Definition \ref{def:geometric lifting} of lifting of a projective scheme is equivalent to Definition \ref{def:geometric lifting per gli ideali} of lifting of a saturated polynomial ideal (see Proposition \ref{prop:h generico}). This allows to observe that 
the algebraic counterpart of our lifting problem consists in identifying the liftings $W$ of $Y$ with the $x_n$-liftings of ideals having saturation equal to $I'$ (see Proposition \ref{prop:geometric lifting}). This characterization implies that all the liftings with a given Hilbert polynomial $p(t)$ can be identified with the points of a disjoint union of locally closed subschemes in the Hilbert scheme $\mathrm{Hilb}_{p(t)}^n$ via suitable Gr\"obner strata (see  
Theorem \ref{th:parametrizzazione}).

Moreover, in the further non-restrictive hypothesis that the variable $x_{n-1}$ is generic for $I'$, we prove that a subscheme $W$ is a lifting of $Y$ only if $I$ belongs to a Gr\"obner stratum over a monomial ideal $J$ which is a lifting of the initial ideal of $I'$ (see Theorem \ref{th:dove}). The proofs of these results are constructive and then produce a method for the computation of the locally closed subschemes in a Hilbert scheme whose disjoint union corresponds to all liftings with a given Hilbert polynomial via Gr\"obner strata (see Algorithm \ref{alg:computation in GS}). 

So, we obtain embeddings of liftings with a given Hilbert polynomial in Gr\"obner strata and, hence, in a Hilbert scheme. Then, it is natural to look at liftings from a functorial point of view. Thanks to the constructive characterization of liftings given in Section \ref{sec:Construction}, we are finally able to define functors related to our liftings as subfunctors of a Hilbert functor (Definitions \ref{def:funtore lifting} and \ref{def:funtore lifting eq}). Given a Hilbert polynomial $p(t)$, we prove that the functor $\underline{\mathrm{L}}_Y^{p(t)}$ of liftings of $Y$ and the functor $\underline{\mathrm{L}}_Y^{p(t),e}$ of equidimensional liftings with Hilbert polynomial $p(t)$ are representable. 

For what concerns the functor $\underline{\mathrm{L}}_Y^{p(t),e}$, we adapt to our situation classical arguments of algebraic geometry, like the upper semicontinuity of the dimension of the fibers of a dominant map (Theorem \ref{th:schema che rappresenta}). 

For what concerns the functor $\underline{\mathrm{L}}_Y^{p(t)}$, we observe that the locally closed subfunctors that are represented by the locally closed schemes above introduced in suitable Gr\"obner strata are not necessarily open subfunctors of $\underline{\mathrm{L}}_Y^{p(t)}$ and so are not  expected to be suitable to give a unique scheme representing $\underline{\mathrm{L}}_Y^{p(t)}$. At this point our constructive approach is pushed forward by the following fact. Given the degrevlex term order, if the initial ideal of the saturated ideal $I'$ defining $Y$ is quasi-stable then the initial ideal of the saturated ideal $I$ defining a lifting of $Y$ is quasi-stable (see Theorem \ref{th:lifting}). This result, which is achieved in Section \ref{sec:quasi-stable}, allows us to study liftings in marked schemes over truncations of quasi-stable ideals, which are open subschemes in a Hilbert scheme (see Definition \ref{def:markedscheme}). Then, we are able to replace the disjoint locally closed subschemes described by means of Gr\"obner strata by open subschemes that we describe by means of marked schemes.
Indeed, we exploit the features of marked schemes in order to construct an open covering of the functor  $\underline{\mathrm{L}}_Y^{p(t)}$ that provides a scheme representing  $\underline{\mathrm{L}}_Y^{p(t)}$ by a gluing procedure (see Lemma \ref{lemma:sezioni ideali marcati}, Theorems \ref{th:aperti} and \ref{th:funtore di punti} and Remark \ref{rem:stromme}). 
The proof of this last result is constructive and gives rise to Algorithm~\ref{alg:computation in MS}. In Example \ref{ex:esempio bello} we exhibit an application of this construction.


\section{Setting}

We consider commutative unitary rings and morphisms that preserve the unit. 
Given sets of variables $\mathbf x=\lbrace x_0,\dots,x_{n-1}\rbrace$ and $\mathbf x,x_n=\lbrace x_0,\dots,x_{n}\rbrace$, we assume they are ordered as $x_0>\cdots>x_n$. 
For a term $x^\alpha:=x_0^{\alpha_0}\cdots x_n^{\alpha_n}$ 
other than $1$, we denote by $\min(x^\alpha)$ the smallest variable appearing in $x^\alpha$ with a non-zero exponent 
and by $\deg(x^\alpha)=\vert\alpha\vert:=\sum_i \alpha_i$ the degree of $x^\alpha$.

Let $K$ be an infinite field. We denote the polynomial ring $K[x_0,\dots,$ $x_{n-1}]$ by $K[\mathbf x]$ and the polynomial ring $K[x_0,\dots,x_n]$ 
by $K[\mathbf x,x_n]$. For any (Noetherian) $K$-algebra $A$, $A[\mathbf x]$ denotes the polynomial ring $K[\mathbf x]\otimes_K A$ and $\Aox$ denotes 
$K[\mathbf x,x_n]\otimes_K A$ as \emph{standard graded algebras}. 
Obviously, $A[\mathbf x]$ is a subring of $\Aox$, hence the following notations and assumptions will be stated for $\Aox$ but will hold for $A[\mathbf x]$ too. 

For any non-zero homogeneous polynomial $f \in A[\mathbf x,x_n]$, the {\em support} $\supp(f)$ of $f$ is the set of terms in the variables $\mathbf x,x_n$ that appear in $f$ with a non-zero coefficient.
We denote by $\mathrm{coeff}(f)\subset A$ the set of the coefficients in $f$ of the terms of $\supp(f)$. For any subset $\Gamma\subseteq A[\mathbf x,x_n]$, $\Gamma_t$ is the set of homogeneous polynomials of $\Gamma$ of degree $t$. Furthermore, we denote by $\langle \Gamma\rangle$ the $A$-module generated by $\Gamma$. 
When $\Gamma$ is a homogeneous ideal, we denote by $\Gamma_{\geq t}$ the ideal  containing 
the homogeneous polynomials of $\Gamma$ of degree $\geq t$. 

Let $I$ be a homogeneous ideal of $A[\mathbf x,x_n]$. 
The {\em saturation} of $I$ is $I^\sat=\{f\in A[\mathbf x,x_n] \ \vert \ \forall \ i=0,\dots n, \exists \ k_i : x_i^{k_i} f \in I\}$. The ideal $I$ is {\em saturated} if $I=I^{\sat}$ and is {\em $m$-saturated} if $I_t=(I^\sat)_t$ for every $t\geq m$. The \emph{satiety} of $I$, denoted by $\sat(I)$, is the smallest $m$ for which $I$ is $m$-saturated.
A linear form $h\in A[\mathbf x,x_n]$ is said \emph{generic} for $I$ if $h$ is not a zero-divisor in $A[\mathbf x,x_n]/I^\sat$.

A {\em monomial ideal} $J$ of $A[\mathbf x,x_n]$ is an ideal generated by terms. We denote by $B_J$ the minimal set of terms generating $J$ and by $\mathcal N(J)$ the sous-escalier of $J$, i.e.~the set of terms outside $J$. 

\begin{definition}\label{def:stable} 
A monomial ideal $J\subset A[\mathbf x,x_n]$ is \emph{quasi-stable} if for every $x^\alpha \in J$ and $x_j>\min(x^\alpha)$, there is $t\geq 0$ such that $\frac{x_j^t x^\alpha}{\min(x^\alpha)}$ belongs to $J$.
\end{definition}

Given a monomial ideal $J$ and an ideal $I$, a {\em $J$-reduced form modulo $I$} of a polynomial $f$ is a polynomial $\bar f$ such that $f-\bar f$ belongs to $I$ and $\supp(\bar f)$ is contained in $\mathcal N(J)$.
If $\bar f$ is the unique possible $J$-reduced form modulo $I$ of $f$, then it is called the {\em $J$-normal form modulo $I$} of $f$ and is denoted by $\Nf(f)$.

With the usual language of Gr\"obner bases theory, {\em from now,} we consider the degree reverse lexicographic term order $\succ$ (degrevlex, for short) and, for every non-zero polynomial $f\in A[\mathbf x,x_n]$, denote by $\In(f)=\max_{\succ}\supp(f)$ its initial term and by $c_f$ the coefficient of $\In(f)$ in $f$. For every ideal $I\subset A[\mathbf x,x_n]$ let $\In(I)=(c_f \In(f) : f\in I)$ be its initial ideal. A set $\{f_1,\dots,f_t\}\subset I$ is a Gr\"obner basis of $I$ if $\{c_{f_1}\In(f_1),\dots,c_{f_t}\In(f_t)\}$ generates $I$. 
In general, Gr\"obner bases theory over rings \citep{CM,Z,M} is more intricate than over fields (see for instance the detailed discussion in \citep{Lederer2011}), and the possibly non-invertible leading coefficients make Gr\"obner bases over rings not well-suited for functorial constructions.  This is the reason why
in this paper we consider homogeneous ideals $I$ generated by either Gr\"obner bases in $K[\mathbf x,x_n]$ or \emph{monic} Gr\"obner bases in $A[\mathbf x,x_n]$ or \emph{marked bases} over quasi-stable ideals in $K[\mathbf x,x_n]$ and $A[\mathbf x,x_n]$ (see Section \ref{sec:backgroundMarkedBases} for the definition of marked basis).
So, the quotients $\Aox/I$ are free graded $A$-modules and 
this is a very key point for the use of functors we will introduce (see \citep{BGS91}, \citep[Lemma 6.1]{BCR2}, \citep[Section 5]{LR2}). 
The crucial fact is that, if $\varphi \colon A \rightarrow B$ is a morphism of $K$-algebras and $I$ is an ideal in $A[\mathbf x]$ (or $\Aox$) generated by a {\em monic} Gr\"obner basis (resp.~a marked basis over a quasi-stable ideal) $G_I$, then $I\otimes_A B$ is generated by $\varphi(G_I)$ which is again a {\em monic} Gr\"obner basis (resp.~a marked basis over a quasi-stable ideal) 
(see Proposition~\ref{prop:funtore}). 

In this setting, for a homogeneous ideal $I$ of $\Aox$ we can consider the {\it Hilbert function} $h_{\Aox/I}$ of the free graded $A$-module $\Aox/I$ and its {\it Hilbert polynomial} $p_{\Aox/I}(t)$ like the Hilbert function and the Hilbert polynomial of $K[\mathbf x, x_n]/\In(I)$. When we say \lq\lq ideal $I$ with Hilbert polynomial $p(t)$\rq\rq \ we mean $p_{\Aox/I}(t)=p(t)$. For the geometric definition of Hilbert polynomial of a projective scheme over a field we refer to \citep[Chapter III, Exercise 5.2]{H}. 

\begin{remark}\label{rem:generic and degrevlex} 
\
\begin{enumerate}
\item If $h$ is a generic linear form for $I$, then $I^\sat=(I : h^\infty):=\{f\in A[\mathbf x,x_n] \ \vert \ \exists t\geq 0 : h^tf\in I\}$ (see \citep{BS}).
\item \label{item:remgeneric} 
Recall that the smallest variable is generic for a homogeneous polynomial ideal $I\subset A[\mathbf x,x_n]$ generated by a monic Gr\"obner basis if and only if it is generic for $\In(I)$. Indeed, the initial 
term  with respect to degrevlex of a homogeneous  polynomial $f$ is divisible by $x_n^r$ if and only if $f$ is divisible by $x_n^r$.
\item \label{item:estensioni e contrazioni}
If $L$ is a homogeneous ideal of $K[\mathbf x,x_n]$ then 
$L A[\mathbf x,x_n]=L\otimes_K A$, \ $L = L A[\mathbf x,x_n] \cap~K[\mathbf x,x_n]$, \ $(L A[\mathbf x,x_n])^\sat = L^\sat A[\mathbf x,x_n]$, and if $I$ is a homogeneous ideal of $A[\mathbf x,x_n]$ then $I^\sat \cap K[\mathbf x,x_n] = (I \cap K[\mathbf x,x_n])^\sat$.
\end{enumerate}  
\end{remark}


\section{Background I: Gr\"obner functor and functor of $x_n$-liftings}
\label{sec:Groebner strata and $x_n$-liftings}

In this section, referring to \citep{LR,LR2} and to \citep{BCGR}, we collect some known information about Gr\"obner functor and functor of $x_n$-liftings.
Both these functors are subfunctors of a Hilbert functor, for which we refer to \citep{GroHilb,Nit}. We only recall that the Hilbert functor $\underline{\mathrm{Hilb}}^n$ associates to a locally Noetherian $K$-scheme $S$ the set $
\underline{\mathrm{Hilb}}^n(S) = \Bigl\{W \subset \mathbb P_S^n \ \vert \ W \text{ is flat over }S\Bigr\}
$ of flat families of subschemes of $\mathbb P^n_S=\mathbb P^n_K\times_{\mathrm{Spec}(K)} S$ parameterized by $S$, 
and to any morphism $\phi: T \rightarrow S$ of locally Noetherian $K$-schemes the map
$$\begin{array}{lccc}\underline{\mathrm{Hilb}}^n(\phi): &\underline{\mathrm{Hilb}}^n(S) &\longrightarrow &\underline{\mathrm{Hilb}}^n(T)\\
& W &\longmapsto &W \times_S T.
\end{array}
$$
Grothendieck shows that the functor  $\underline{\mathrm{Hilb}}^n$ is representable by a locally Noetherian scheme $\mathrm{Hilb}^n$, called Hilbert scheme (see \citep{GroHilb}).
Given the subfunctor 
\[
\underline{\mathrm{Hilb}}^n_{p(t)}(S):=\Bigl\{W \subset \mathbb P_S^n \ \vert \ \ W \text{ is flat over }S\text{ and has fibers with Hilbert polynomial } p(t)\Bigr\},
\]
then $\underline{\mathrm{Hilb}}^n$ decomposes as co-product:
\begin{equation}\label{eq:co-product}
\underline{\mathrm{Hilb}}^n=\coprod_{p(t)\text{ admissible for schemes in } \mathbb P_K^n}\underline{\mathrm{Hilb}}^n_{p(t)}.
\end{equation}
For every admissible polynomial $p(t)$ of $\mathbb P_K^n$, $\underline{\mathrm{Hilb}}^n_{p(t)}$ is represented by a projective scheme $\mathrm{Hilb}_{p(t)}^n$.
The fact that $\mathrm{Hilb}^n$ and $\mathrm{Hilb}_{p(t)}^n$ are locally Noetherian allows to consider the restriction of the functors $\underline{\mathrm{Hilb}}^n$ and $\underline{\mathrm{Hilb}}^n_{p(t)}$ to the category of Noetherian $K$-algebras (e.g.~\cite[Proposition VI-2 and Exercise VI-3]{EH}). 
Every $K$-point of a Hilbert scheme is identified with the saturated ideal $I\subseteq K[x_0,\dots,x_n]$ that defines the corresponding fiber in $\mathbb P^n_K$. 

Since in this paper we only consider the degrevlex order, we now recall the notion of Gr\"obner functor in this particular setting.

\begin{definition}\label{def:GrStratum} {\rm \cite[Section 5 and  Theorem 5.3]{LR2}} 
Given a monomial ideal $J\subset K[\mathbf x,x_n]$, the {\em Gr\"obner functor} $\underline{\St}_J: \text{Noeth-}K\text{-Alg} \rightarrow \text{Sets}$ associates to any Noetherian $K$-algebra $A$ the set $\underline{\St}_J(A):=\{I\subset A[\mathbf x,x_n] : \mathrm{in}(I)=J\otimes_K A\}$ and to any $K$-algebra morphism $\phi: A\rightarrow B$ the function $\underline{\St}_J(\phi): \underline{\St}_J(A) \rightarrow \underline{\St}_J(B)$ such that the image of an ideal $I$ is $I\otimes_A B$. The affine scheme $\mathrm{St}_J$ representing the Gr\"obner functor is called {\em Gr\"obner stratum}. 
\end{definition}

In order to briefly recall the construction of a Gr\"obner stratum $\mathrm{St}_J$, take the set of polynomials
\begin{equation}\label{JbaseC}
G:= \left\{ f_\alpha=x^\alpha +  \sum_{{red}x^\alpha\succ x^\gamma\in\mathcal N(J)_{\vert \alpha\vert}} C_{\alpha\gamma} x^\gamma \ : \ 
\In(f_\alpha) = x^\alpha\in B_J \right\} \subset K[C_J][\mathbf x,x_n]
\end{equation}
where $C_J$ denotes the set of the new variables $C_{\alpha \gamma}$.
Let $\mathfrak{a}_J$ be the ideal in $K[C_J]$ generated by the coefficients  (w.r.t.~variables $\mathbf x,x_n$) of the terms 
 in $J$-reduced forms $\overline{S(f_\alpha,f_{\beta})}$ of the $S$-polynomials $S(f_\alpha,f_{\beta})$ modulo $(G)$. Due to \cite[Proposition 3.5]{LR} the ideal $\mathfrak{a}_J$ depends only on $J$ and on the given term order, which here is supposed to be the degrevlex. Hence, the ideal $\mathfrak{a}_J$ defines the affine scheme $\mathrm{St}_J=\mathrm{Spec}(K[C_J]/\mathfrak{a}_J)$.

\begin{theorem} \label{th:main features} \rm{(\cite[Lemma 5.2]{LR2},  \cite[Theorem 2.2]{BCGR})} 
Let $J\subset \Aox$ be a monomial ideal and $p(t)$ the Hilbert polynomial of $\Aox/J$. With the degrevlex term order, 
\begin{itemize}
\item[(i)] $\underline{\mathrm{St}}_{J}$ is a Zariski sheaf. 
\item[(ii)] If the terms in $B_J$ are not divisible by $x_n$, then 
$\mathrm{St}_J\cong \mathrm{St}_{J_{\geq m}}, \text{ for every integer } m$, 
and $\underline{\mathrm{St}}_J$ is a locally closed subfunctor of the Hilbert functor $\underline{\mathrm{Hilb}}^n_{p(t)}$. 
\end{itemize}
\end{theorem}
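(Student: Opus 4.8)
The plan is to prove the two assertions of Theorem~\ref{th:main features} by reducing everything to the representability and explicit description of the Gr\"obner stratum $\mathrm{St}_J$ as $\mathrm{Spec}(K[C_J]/\mathfrak{a}_J)$, which is already available from \cite[Proposition~3.5]{LR} and \cite[Theorem~5.3]{LR2}.

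\textbf{Part (i): the sheaf property.} First I would recall that for a Noetherian $K$-algebra $A$ and an ideal $I\subset \Aox$ with $\In(I)=J\otimes_K A$, the quotient $\Aox/I$ is a free graded $A$-module with basis $\mathcal N(J)$, and $I$ is generated by a monic Gr\"obner basis obtained by evaluating the generic family $G$ of \eqref{JbaseC} at a unique $K$-point of $\mathrm{St}_J(A)$; this is exactly what identifies $\underline{\St}_J$ with $h_{\mathrm{St}_J}$. To check that $\underline{\St}_J$ is a Zariski sheaf, I would take a Noetherian $K$-algebra $A$, elements $a_1,\dots,a_r\in A$ generating the unit ideal, and a compatible family of ideals $I_i\subset A_{a_i}[\mathbf x,x_n]$ with $\In(I_i)=J\otimes A_{a_i}$ and $I_i\otimes_{A_{a_i}}A_{a_ia_j}=I_j\otimes_{A_{a_j}}A_{a_ia_j}$. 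Each $I_i$ corresponds to a coefficient tuple $c^{(i)}\in(K[C_J]/\mathfrak{a}_J)(A_{a_i})$ via the uniqueness of the monic marked Gr\"obner basis, the compatibility condition translates into agreement of these tuples over the $A_{a_ia_j}$, and since $A\to\prod A_{a_i}\rightrightarrows\prod A_{a_ia_j}$ is an equalizer (faithfully flat descent for the structure sheaf on $\Spec A$) there is a unique tuple $c\in(K[C_J]/\mathfrak{a}_J)(A)$ restricting to the $c^{(i)}$; the ideal $I$ generated by $G$ evaluated at $c$ has $\In(I)=J\otimes_K A$ and glues the $I_i$. Uniqueness follows again from uniqueness of the monic Gr\"obner basis. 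This is the argument of \cite[Lemma~5.2]{LR2} and I would cite it rather than reproduce all details.

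\textbf{Part (ii): comparison of strata and local closedness.} Assume now the generators in $B_J$ are not divisible by $x_n$. The isomorphism $\mathrm{St}_J\cong\mathrm{St}_{J_{\geq m}}$ I would get by showing the two Gr\"obner functors are naturally isomorphic: if $\In(I)=J\otimes_K A$ then certainly $\In(I_{\geq m})=J_{\geq m}\otimes_K A$, giving a natural transformation $\underline{\St}_J\to\underline{\St}_{J_{\geq m}}$; conversely, given $I'$ with $\In(I')=J_{\geq m}\otimes_K A$, the fact that $x_n$ does not divide any generator of $J$ means (by Remark~\ref{rem:generic and degrevlex}\eqref{item:remgeneric}) that $x_n$ is generic, so $(I')^{\sat}=(I':x_n^\infty)$ and the saturation is computed degreewise in a way compatible with base change; one recovers an ideal $I$ with $\In(I)=J\otimes_K A$ by taking the homogeneous saturation and the assignments are mutually inverse and compatible with $K$-algebra morphisms. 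For the last claim, I would use the co-product decomposition \eqref{eq:co-product} of $\underline{\mathrm{Hilb}}^n$, and note that for $m$ large (e.g.\ the Gotzmann number, or any $m$ beyond the regularity of all ideals in the family) the assignment $I'\mapsto$ (the subscheme $\Proj(\Aox/I'{}^{\sat})$ cut out in degree $m$) realizes $\underline{\St}_{J_{\geq m}}$ inside $\underline{\mathrm{Hilb}}^n_{p(t)}$; the locally closed embedding comes from the standard description of an open affine chart of $\mathrm{Hilb}^n_{p(t)}$ by the vanishing/non-vanishing of suitable $m$-th graded pieces, so that $\mathrm{St}_{J_{\geq m}}$ is the closed subscheme of such a chart where the Gr\"obner relations $\mathfrak{a}_{J_{\geq m}}$ hold.

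\textbf{Main obstacle.} The routine bookkeeping (sheaf axioms, naturality) is straightforward; the genuinely delicate point is the local closedness in (ii), namely matching the scheme structure of $\mathrm{St}_{J_{\geq m}}=\Spec(K[C_{J_{\geq m}}]/\mathfrak{a}_{J_{\geq m}})$ with the scheme structure that $\mathrm{Hilb}^n_{p(t)}$ induces on the locus of saturated ideals with fixed initial ideal. This requires knowing that in degree $m$ beyond the Gotzmann/regularity bound the Hilbert functor is cut out by the single Pl\"ucker-type chart and that flatness over $A$ is equivalent to freeness of $(\Aox/I)_m$ with the expected rank, so I would invoke the Gotzmann persistence/regularity results together with \cite[Theorem~2.2]{BCGR} at precisely this step rather than re-deriving them.
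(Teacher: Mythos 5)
The paper does not give a proof of this statement---Theorem \ref{th:main features} is presented purely as a recall of \cite[Lemma 5.2]{LR2} and \cite[Theorem 2.2]{BCGR}---so there is no in-text argument to compare against. Your sketch is a faithful reconstruction of those cited proofs: part (i) reduces to the standard fact that an affine representable functor is a Zariski sheaf via the equalizer property of $A\to\prod A_{a_i}\rightrightarrows\prod A_{a_ia_j}$ (and this is legitimate here because Definition \ref{def:GrStratum}, following \cite{LR2}, already asserts representability of $\underline{\mathrm{St}}_J$ by $\mathrm{Spec}(K[C_J]/\mathfrak{a}_J)$); part (ii)'s truncation/saturation bijection $I\leftrightarrow I_{\geq m}$ is exactly the mechanism in those sources, resting on the degrevlex identity $\In(I:x_n^\infty)=\In(I):x_n^\infty$ of Bayer--Stillman and on the genericity of $x_n$ supplied by Remark \ref{rem:generic and degrevlex}\eqref{item:remgeneric}, and the locally closed embedding is obtained, as you say, by realizing $\mathrm{St}_{J_{\geq m}}$ as a closed subscheme of the Gotzmann-degree affine chart of $\mathrm{Hilb}^n_{p(t)}$ determined by $\mathcal{N}(J)_m$. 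One point you gesture at but do not nail down: the inverse assignment $I'\mapsto (I')^{\sat}$ must be natural in the Noetherian $K$-algebra $A$, and saturation over rings does not commute with base change in general. This is not a gap, because the inverse of a natural bijection is automatically natural, so once you verify pointwise bijectivity (which your argument does) you are done without having to check directly that saturation of ideals with monic Gr\"obner bases commutes with $-\otimes_A B$; but it would have been worth stating, as it is precisely the reason the monic-Gr\"obner-basis setting is used throughout.
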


\begin{definition}\label{def:lifting} \citep{GGR,Roitman}
A homogeneous ideal $I$ of $A[\mathbf x,x_n]$ is called a {\em $x_n$-lifting} of a homogeneous ideal $H$ of $K[\mathbf x]$ if 
\begin{enumerate}
\item[(a)] the indeterminate $x_n$ is a non-zero divisor in $\Aox/I$;
\item[(b)] $(I,x_n)/(x_n)\simeq H A[\mathbf x]$ under the canonical isomorphism 
 $\Aox/(x_n)\simeq A[\mathbf x]$; 

or, equivalently,
\item[(b$'$)] $\{ g(x_0,x_1,\ldots,x_{n-1},0) : g\in I\} = H A[\mathbf x]$.
\end{enumerate}
\end{definition}

\begin{theorem} {\rm(\cite[Theorem 2.5]{CFRo}, \cite[Proposition 6.2.6]{KR2},  \cite[Theorem 3.2 and Corollary 3.3]{BCGR})}\label{FM}
Let $H$ be a homogeneous ideal of $K[\mathbf x]$. A homogeneous ideal $I$ of $\Aox$ is a $x_n$-lifting of $H$ if and only if the reduced Gr\"obner basis of $I$ is of type $\lbrace f_\alpha+g_\alpha\rbrace_\alpha$, 
where $\lbrace f_\alpha\rbrace_\alpha$ is the reduced Gr\"obner basis of $H$ and $g_\alpha \in (x_n)\Aox$.
If $I$ is a $x_n$-lifting of $H$, then   $\mathrm{in}(I)$ is generated by the same terms as $\mathrm{in}(H)$.
\end{theorem}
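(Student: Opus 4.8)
The plan is to reduce everything to one elementary feature of the degree reverse lexicographic order: since $x_n$ is the smallest variable, within a fixed degree every term not divisible by $x_n$ is larger than every term divisible by $x_n$. Concretely, writing a homogeneous $f\in\Aox$ as $f=\sum_{k\ge 0}x_n^k q_k$ with $q_k\in\Ax$, this gives $\In(f)=x_n^{k_0}\In(q_{k_0})$ where $k_0=\min\{k:q_k\ne 0\}$; so if the $K$-algebra map $\pi\colon\Aox\to\Ax$ sending $x_n\mapsto 0$ does not kill $f$, then $\In(f)=\In(\pi(f))$ and $c_f=c_{\pi(f)}$. This is just Remark~\ref{rem:generic and degrevlex}(\ref{item:remgeneric}) read degree by degree, and it is the computational core of the whole argument. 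Note that condition (b$'$) is precisely $\pi(I)=H\Ax$ and that $\ker\pi=(x_n)\Aox$.

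For the ``only if'' implication I would first convert condition (a) into the identity $I\cap(x_n)=x_nI$ (if $x_ng\in I$ then $g\in I$, $x_n$ being a non-zero divisor in $\Aox/I$). This lets me factor $x_n$ out of any homogeneous $f\in I\setminus\{0\}$ repeatedly until $f=x_n^ku$ with $u\in I$ and $\pi(u)\ne 0$; combined with the observation above and with $\pi(I)=H\Ax$, one gets $c_f\In(f)\in\In(H\Ax)=\In(H)\otimes_K A$ (the last equality by the extension property of monic \Gr bases), hence $\In(I)\subseteq\In(H)\otimes_K A$. The reverse inclusion comes from lifting each element $f_\alpha$ of the reduced \Gr basis of $H$ to some $\tilde f_\alpha\in I$ with $\pi(\tilde f_\alpha)=f_\alpha$, so that $\In(\tilde f_\alpha)=x^\alpha$. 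This proves the last assertion of the theorem, shows $\{\tilde f_\alpha\}$ is a monic \Gr basis of $I$, and identifies $B_{\In(I)}$ with $B_{\In(H)}$. I would then take the (unique) reduced monic \Gr basis $\{h_\alpha\}$ of $I$, apply $\pi$, and note via the observation above that $\{\pi(h_\alpha)\}$ is a monic reduced \Gr basis of $H\Ax$; by uniqueness and the extension property (Proposition~\ref{prop:funtore} and Remark~\ref{rem:generic and degrevlex}(\ref{item:estensioni e contrazioni})) it must equal $\{f_\alpha\}$. Hence $\pi(h_\alpha)=f_\alpha$, i.e. $h_\alpha=f_\alpha+g_\alpha$ with $g_\alpha:=h_\alpha-f_\alpha\in\ker\pi=(x_n)\Aox$, as wanted.

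For the ``if'' implication, assume the reduced \Gr basis of $I$ is $\{f_\alpha+g_\alpha\}$ with $\{f_\alpha\}$ the reduced \Gr basis of $H$ and $g_\alpha\in(x_n)\Aox$. Since $f_\alpha\in K[\mathbf x]$ is monic with $\In(f_\alpha)=x^\alpha$ and every term of $g_\alpha$ is divisible by $x_n$, hence smaller than $x^\alpha$ in degree $\vert\alpha\vert$, one gets $\In(f_\alpha+g_\alpha)=x^\alpha$ with coefficient $1$, so $\In(I)=(x^\alpha:\alpha)=\In(H)\otimes_K A$, which is the last assertion of the theorem. For (a): $\In(I)$ being generated by terms not involving $x_n$, the variable $x_n$ is a non-zero divisor modulo $\In(I)$ and $\mathcal N(\In(I))$ is stable under multiplication by $x_n$; hence if $x_nf\in I$ then, replacing $f$ by its normal form $\Nf(f)$ (available since the \Gr basis is monic), $x_n\Nf(f)$ is again supported on $\mathcal N(\In(I))$ and lies in $I$, so $x_n\Nf(f)=\Nf(x_n\Nf(f))=0$, whence $\Nf(f)=0$ and $f\in I$; thus $x_n$ is a non-zero divisor in $\Aox/I$. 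For (b$'$): since $\pi$ is a surjective ring homomorphism, $\pi(I)$ is generated by $\{\pi(f_\alpha+g_\alpha)\}=\{f_\alpha\}$, which generates $H\Ax$, so $\pi(I)=H\Ax$.

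The only genuine subtlety is that $A$ is merely a Noetherian $K$-algebra, so one must be sure that normal forms, uniqueness of reduced \Gr bases and base change all behave well; this is exactly what the standing hypothesis of the paper (only ideals generated by monic \Gr bases, together with Proposition~\ref{prop:funtore}) guarantees, and it is the point where I would be most careful. Everything else reduces to the elementary remark about degrevlex recorded in the first paragraph together with the identity $\In(I)=\In(H)\otimes_K A$.
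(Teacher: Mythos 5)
The paper gives no internal proof of Theorem~\ref{FM}: it is stated with citations to \cite{CFRo}, \cite{KR2}, and \cite{BCGR}, so there is no in-text argument to compare against. Your proof is correct and self-contained. The reduction to the single degrevlex observation --- writing a homogeneous $f$ as $\sum_k x_n^k q_k$ with $q_k\in\Ax$ and noting $\In(f)=x_n^{k_0}\In(q_{k_0})$ --- is precisely Remark~\ref{rem:generic and degrevlex}\eqref{item:remgeneric} read degree by degree, and the rest of the argument follows cleanly from it together with $\pi(I)=H\Ax$ and $\ker\pi=(x_n)\Aox$. Two small points are worth making explicit but do not affect correctness. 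First, in the ``only if'' direction you need the reduced monic Gröbner basis of $I$ to exist before you can apply $\pi$ to it; your construction of the monic Gröbner basis $\{\tilde f_\alpha\}$ (by lifting the $f_\alpha$ through $\pi$) supplies exactly this, since it shows $\In(I)=\In(H)\otimes_K A$ and gives a monic generating Gröbner basis, from which the unique reduced monic Gröbner basis is obtained by interreduction. Second, in the ``if'' direction the step from $x_n\Nf(f)=0$ to $\Nf(f)=0$ silently uses that $x_n$ is a non-zero divisor in the polynomial ring $\Aox$; this holds for any coefficient ring $A$, but since $A$ may have zero-divisors it is the kind of thing one should say aloud. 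You flag the Gröbner-over-rings subtleties at the end, which is the right instinct; within the paper's standing convention of ideals generated by monic Gröbner bases, all the normal-form and uniqueness facts you invoke do hold.
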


\begin{definition}\label{def:xn-liftings} \cite[Definition 3.4]{BCGR}
The {\em functor} \ $\underline{\mathrm{L_{H}}}: \underline{\text{Noeth-}K\text{-Alg}}\rightarrow \underline{\mathrm{Sets}}$ {\em of $x_n$-liftings} of a homogeneous ideal $H$ of $K[\mathbf x]$
associates to every Noetherian $K$-algebra $A$ the set $
\underline{\mathrm{L_{H}}}(A)=\lbrace I\subseteq \Aox : I \text{ is a $x_n$-lifting of }H \rbrace$ 
and to every morphism of $K$-algebras $\phi:A \rightarrow B$ the map
\[\begin{array}{rcl}
\underline{\mathrm{L_{H}}}(\phi): \underline{\mathrm{L_{H}}}(A)&\rightarrow& \underline{\mathrm{L_{H}}}(B)\\
 I&\mapsto& I\otimes_A B.
\end{array}\]
\end{definition}
With the notation of Definition \ref{def:xn-liftings}, let $J:=\In(H)A[\mathbf x,x_n]$ and $p(t):=p_{A[\mathbf x,x_n]/J}$. The functor $\underline{\mathrm{L_{H}}}$ is a closed subfunctor of $\underline{\St}_J$ represented by a closed affine subscheme $\mathrm{L_H}$ of $\mathrm{St}_J$ and, hence, a locally closed subscheme of the Hilbert scheme $\mathrm{Hilb}_{p(t)}^n$ thanks to Theorem \ref{th:main features} \cite[Theorem 4.3 and Proposition 6.1]{BCGR}. Moreover, $\underline{\mathrm{L_H}}$ is a Zariski sheaf.


\section{Liftings of projective schemes}
\label{sec:Liftings of projective schemes}

\begin{definition}\label{def:geometric lifting}
Let $\plane=\mathbb P^{n-1}_K$ be the hyperplane of $\mathbb P^n_K$ defined by  the ideal $(x_n)$, $Y$ be a closed subscheme of $ \mathbb P^{n-1}_K$ with Hilbert polynomial $p_Y(t)$   and $A$ be a $K$-algebra. A {\em lifting of $Y$ over $A$}   is a closed subscheme $W\in \underline{\mathrm{Hilb}}^n(A)$ of $\mathbb P^n_A=\mathbb P^n_K\times_{\mathrm{Spec}(K)} \mathrm{Spec}(A)$, such that:
\begin{itemize}
\item[(i)] $\Delta p_W(t):=p_W(t)-p_W(t-1)=p_Y(t)$; 
\item[(ii)] $W\cap (\plane \times_{\mathrm{Spec}(K)} \mathrm{Spec}(A)) = Y\times_{\mathrm{Spec}(K)} \mathrm{Spec}(A)$. 
\end{itemize}
\end{definition}

Observe that in Definition \ref{def:geometric lifting} we assume that the scheme $Y$ is contained in the hyperplane defined by the ideal $(x_n)$. This assumption is not restrictive because, given a linear form $h\in K[\mathbf x,x_n]$, we can always replace $h$ by the smallest variable $x_n$ thanks to a suitable (deterministic) change of coordinates. 

\begin{definition}\label{def:geometric lifting per gli ideali}
Let $I'$ be a homogeneous saturated ideal of $K[\mathbf x]$. A homogeneous saturated ideal $I$ of $A[\mathbf x,x_n]$ is called a {\em lifting of $I'$} if the following conditions are satisfied:
\begin{enumerate}
\item[(a)] the indeterminate $x_n$ is generic for $I$;
\item[(b)] $\Bigl((I,x_n)/(x_n)\Bigr)^{\sat}\simeq I' A[\mathbf x]$ under the canonical isomorphism 
 $\Aox/(x_n)\simeq A[\mathbf x]$; 

or, equivalently,
\item[(b$'$)] $(\{g(x_0,x_1,\ldots,x_{n-1},0) : g\in I\})^{\sat} = I' A[\mathbf x]$.
\end{enumerate}
\end{definition}

\begin{proposition}\label{prop:h generico}
Let $Y$ be a closed subscheme of $\mathbb P^{n-1}_K$ defined by the homogeneous saturated ideal $I'\subset K[\mathbf x]$ and $W\in \underline{\mathrm{Hilb}}^n(A)$ be a closed subscheme of $\mathbb P^n_A$ defined by the homogeneous saturated ideal $I\subset  A[\mathbf x,x_n]$. Then, $W$ is a lifting of $Y$ if and only if $I$ is a lifting of $I'$.
\end{proposition}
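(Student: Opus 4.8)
The plan is to prove the equivalence by unwinding the scheme-theoretic statements in Definition~\ref{def:geometric lifting} into ideal-theoretic statements matching Definition~\ref{def:geometric lifting per gli ideali}, using that $W$ and $Y$ are defined by saturated ideals and that the Hilbert polynomial is a purely numerical invariant visible at the level of ideals. First I would recall that, since $W\in\underline{\mathrm{Hilb}}^n(A)$, the ideal $I$ is such that $A[\mathbf x,x_n]/I$ is flat over $A$, and that the defining ideal of the scheme-theoretic intersection $W\cap(\mathcal H\times_{\Spec K}\Spec A)$ is exactly the image $(I,x_n)/(x_n)$ inside $A[\mathbf x,x_n]/(x_n)\simeq A[\mathbf x]$. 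Likewise, the ideal of $Y\times_{\Spec K}\Spec A$ is $I'A[\mathbf x]$, which is already saturated by Remark~\ref{rem:generic and degrevlex}\eqref{item:estensioni e contrazioni}. The subtlety is that condition (ii) of Definition~\ref{def:geometric lifting} asserts equality of \emph{schemes}, which translates to equality of \emph{saturated} ideals, i.e.\ $\bigl((I,x_n)/(x_n)\bigr)^{\sat}=I'A[\mathbf x]$, which is precisely condition (b) of Definition~\ref{def:geometric lifting per gli ideali}.

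Next I would address the interplay between condition (i) of Definition~\ref{def:geometric lifting} and condition (a) of Definition~\ref{def:geometric lifting per gli ideali}. The key point is that $x_n$ generic for $I$ means $x_n$ is a non-zero-divisor on $A[\mathbf x,x_n]/I^{\sat}=A[\mathbf x,x_n]/I$, and this is exactly the condition under which the short exact sequence $0\to (A[\mathbf x,x_n]/I)(-1)\xrightarrow{\cdot x_n} A[\mathbf x,x_n]/I\to A[\mathbf x]/\bigl((I,x_n)/(x_n)\bigr)\to 0$ is exact, giving $\Delta p_W(t)=p_{Y}(t)$ when the quotient on the right has Hilbert polynomial $p_Y(t)$ — i.e.\ when (b) holds. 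So I would argue: assuming (a) and (b), the exact sequence yields $\Delta p_W(t)=p_{A[\mathbf x]/((I,x_n)/(x_n))}(t)=p_{A[\mathbf x]/I'A[\mathbf x]}(t)=p_Y(t)$, giving (i); and the scheme equality (ii) follows from (b) by taking $\Proj$. Conversely, from (i) and (ii) I would first deduce (b) as above (equality of schemes gives equality of saturated ideals), and then I need (a): here the hypothesis that $x_n$ is generic is built into Definition~\ref{def:geometric lifting}? It is not stated there, so the delicate part is to \emph{derive} genericity of $x_n$ for $I$ from the Hilbert polynomial condition (i). This is the main obstacle.

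To overcome it, I would use the standard fact that for a homogeneous ideal $I$ with $A$ a field (or working fibrewise, using flatness of $A[\mathbf x,x_n]/I$ over $A$ to reduce to fibres over points of $\Spec A$), one always has $\Delta p_W(t)\le p_{Y'}(t)$ where $Y'$ is the hyperplane section scheme, with equality if and only if $x_n$ is a non-zero-divisor modulo $I^{\sat}$; more precisely, a general hyperplane section always satisfies $\Delta \ge$, and the failure of $x_n$ to be generic forces the inequality to be strict in some degree. Since condition (i) forces $\Delta p_W(t)=p_Y(t)=p_{Y'}(t)$ (using (ii) to identify $Y'$ with $Y_A$), equality holds, hence $x_n$ is generic, i.e.\ (a). I would carry this out by invoking the exact-sequence argument in reverse: the map $\cdot x_n$ on $A[\mathbf x,x_n]/I$ has kernel $(I:x_n)/I$, and the cokernel is $A[\mathbf x]/((I,x_n)/(x_n))$; comparing Hilbert polynomials, $\Delta p_W(t)=p_{Y_A}(t)-p_{(I:x_n)/I}(t)$, and since the intersection scheme is reduced-to $Y_A$ via (ii) at the level of saturations, a dimension/degree count shows $(I:x_n)/I$ must be eventually zero, which (as $I$ is saturated) forces $(I:x_n)=I$, i.e.\ $x_n$ is a non-zero-divisor on $A[\mathbf x,x_n]/I=A[\mathbf x,x_n]/I^{\sat}$. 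Finally I would note the equivalence of (b) and (b$'$) in Definition~\ref{def:geometric lifting per gli ideali} is immediate from the description $(I,x_n)/(x_n)=\{g(x_0,\dots,x_{n-1},0):g\in I\}$ under the canonical isomorphism, so no extra work is needed there, and then assemble the two implications into the stated equivalence.
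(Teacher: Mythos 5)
Your proposal is correct and follows essentially the same route as the paper: both hinge on the short exact sequence $0 \to (\Aox/(I:x_n))_{t-1} \xrightarrow{\cdot x_n} (\Aox/I)_t \to (\Aox/(I,x_n))_t \to 0$ (you phrase it with the kernel $(I:x_n)/I$, the paper passes through the image, but these are the same computation), comparing Hilbert polynomials to deduce that $(I:x_n)/I$ has zero Hilbert polynomial and hence $(I:x_n)=I$ because $I$ is saturated, which is exactly genericity of $x_n$. The fibrewise reduction via flatness that you mention as a fallback is unnecessary — the paper argues directly over $A$, where Hilbert functions are well-defined since the ideals in play admit monic Gr\"obner bases — but this is a cosmetic detour, not a different proof.
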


\begin{proof}
By condition (ii) of Definition \ref{def:geometric lifting}, we have $\Bigl(\frac{(I,x_n)}{(x_n)}\Bigr)^{\sat}=I' A[\mathbf x]=I'\otimes_K A$, in particular the quotient $A[\mathbf x,x_n]/(I,x_n)$ has Hilbert polynomial $p_Y(t)$. Thus, by the following short exact sequence
\begin{equation}\label{short exact sequence zero-divisor}
0 \rightarrow (\Aox/(I:x_n))_{t-1} \xrightarrow{\cdot x_n} (\Aox/I)_t \rightarrow (\Aox/(I,x_n))_t \rightarrow 0
\end{equation}
condition (i) of Definition \ref{def:geometric lifting}, i.e.~$\Delta p_W(t)= p_Y(t)$, implies that the quotient $A[\mathbf x,x_n]/(I:x_n)$ has the same Hilbert polynomial $p_W(t)$ as $A[\mathbf x,x_n]/I$. Hence, we obtain $(I:x_n)^{\sat}=I^{\sat}=I$ because $(I:x_n)\supseteq I$. In conclusion, we have $I^{\sat}\supseteq(I:x_n)^{\sat}\supseteq (I:x_n) \supseteq I=I^{\sat}$, 
namely $(I:x_n)=I$, which is possible only if $x_n$ is generic for $I$.

Conversely, if $I$ is a lifting of $I'$ then it is quite immediate that $W$ is a lifting of $Y$. Indeed, if $x_n$ is generic for $I$, then $(I:x_n)=I=I^\sat$. Hence, $A[\mathbf x, x_n]/(I:x_n)$ and $ A[\mathbf x, x_n]/I$ have the same Hilbert polynomial. From \eqref{short exact sequence zero-divisor}, we obtain $\Delta p(t)=p_{Y}(t)$ and condition (ii) of Definition~\ref{def:geometric lifting} also follows.
\end{proof}

With the notation we have already introduced in Definition \ref{def:geometric lifting}, we consider a closed subscheme $Y\subset \mathcal H \subset \mathbb P^{n}_K$, where $\mathcal H$ is defined by the ideal $(x_n)$. If $W\subset \mathbb P^n_A$ is a lifting of $Y$, then $\deg W=\deg Y$ and $\dim W = \dim Y +1$, so there are natural restrictions on the Hilbert polynomial $p(t)$ of $W$ because $\Delta p(t)$ must be the Hilbert polynomial $p_Y(t)$ of $Y$. Hence, the non-constant part of the Hilbert polynomial of $W$ is determined by the Hilbert polynomial of $Y$. However, in general 
there are no limits on the constant term of the Hilbert polynomial of $W$, even  
if we only consider liftings without zero-dimensional components, as shown by the following example.

\begin{example}\label{ex:primoes}For every positive integer $k$, consider the double line $W_k \subset \mathbb P^3_K$ defined by the ideal $I=(x_0^2,x_0x_1,x_1^2,x_0x_2^k-x_1x_3^k)\subseteq K[x_0,x_1,x_2,x_3]$. The Hilbert polynomial of $W_k$ is $p(t)=2t+k+1$ and $W_k$ is a lifting of the double point $Y\subset \mathbb P^2_K$ defined by the ideal $I'=(x_0, x_1^2)\subseteq K[x_0,x_1,x_2]$. In conclusion, for every positive integer $k$, we find a lifting of $Y$ with Hilbert polynomial $2z+k+1$ and without zero-dimensional components.
\end{example}

We conclude this section highlighting a geometric feature of liftings. 
 
\begin{proposition} \label{prop:smooth points}
Let $W$ be a lifting of a scheme $Y$. If $Y$ is smooth on a point $P$ then also $W$ is smooth on $P$. 
\end{proposition}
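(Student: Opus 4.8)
The plan is to reduce the statement about the scheme $W$ to a purely local computation on coordinate rings, exploiting condition (ii) of Definition \ref{def:geometric lifting}, which tells us exactly how $Y$ sits inside $W$ as the intersection with the hyperplane $\plane$. First I would pass to an affine chart of $\mathbb P^n_A$ around $P$ where $x_n$ does not vanish; but $P$ lies on $\plane=\{x_n=0\}$, so instead I work in a chart $U=\Spec A[y_1,\dots,y_n]$ (with $y_n$ the dehomogenization variable corresponding to some $x_i$, $i\neq n$, and $y_n$ playing the role of $x_n/x_i$) in which $P$ corresponds to a maximal ideal $\mathfrak p$ and the hyperplane $\plane$ is cut out by the single equation $y_n=0$. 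Writing $R=A[y_1,\dots,y_n]$ and letting $\mathfrak q\subset R$ be the ideal of $W$ in this chart, condition (ii) says that $R/(\mathfrak q + (y_n)) = (A[y_1,\dots,y_{n-1}])/\mathfrak q'$, the coordinate ring of $Y$ in the corresponding chart of $\mathbb P^{n-1}_K$ (after the harmless base change to $A$). Smoothness of $Y$ at $P$ over $K$ means that the local ring of $Y$ at $\mathfrak p$ is regular of the expected dimension, equivalently that the Jacobian criterion holds there.

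Next I would invoke condition (a) of Definition \ref{def:geometric lifting per gli ideali} (available via Proposition \ref{prop:h generico}): $x_n$ is generic for $I$, hence by Remark \ref{rem:generic and degrevlex}(1) a nonzerodivisor on $A[\mathbf x,x_n]/I$, so $y_n$ is a nonzerodivisor on $R/\mathfrak q$ locally. This is the crucial structural input. It implies that $\dim W = \dim Y + 1$ (consistent with the remark preceding the proposition) and, more importantly, that at the level of local rings $\mathcal O_{W,P}$ is obtained from $\mathcal O_{Y,P}$ by ``adding back'' the variable $y_n$ as a genuine extra parameter: the element $y_n$ is part of a regular system of parameters precisely because it is a nonzerodivisor and the quotient $\mathcal O_{W,P}/(y_n)=\mathcal O_{Y,P}$ is regular. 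The standard commutative-algebra fact I would use here is: if $(\mathcal O,\mathfrak m)$ is a Noetherian local ring, $f\in\mathfrak m$ is a nonzerodivisor, and $\mathcal O/(f)$ is regular, then $\mathcal O$ is regular (one can see this via $\dim \mathcal O = \dim \mathcal O/(f) + 1$ together with the surjection on cotangent spaces $\mathfrak m/\mathfrak m^2 \to (\mathfrak m/(f))/(\mathfrak m/(f))^2$ whose kernel is spanned by the image of $f$, which is nonzero since $f$ is a nonzerodivisor and hence $f\notin\mathfrak m^2$ would need checking — in fact $f\notin\mathfrak m^2$ follows because $\mathcal O/(f)$ regular forces $f$ to be a minimal generator of some parameter ideal; alternatively just compare embedding dimension and Krull dimension).

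The main obstacle, and the point that needs care, is matching ``smooth'' in the scheme-theoretic sense used for $W$ and $Y$ over the possibly non-reduced, non-field base $A$ with the regularity statement above, and making sure the genericity hypothesis genuinely forces $y_n\notin\mathfrak m_P^2$ rather than merely $y_n$ being a nonzerodivisor on the whole ring. Concretely: smoothness of $Y$ at $P$ is a statement over $K$ (the scheme $Y$ lives over $K$), while $W$ lives over $A$; the cleanest route is to observe that $P$, being a point of $Y\subset\mathbb P^{n-1}_K$, pulls back to a section-like point whose residue field is that of $Y$, and then to run the Jacobian criterion: if $g_1,\dots,g_r$ are local equations of $Y$ at $P$ inside $A[y_1,\dots,y_{n-1}]$ with Jacobian matrix of full rank $n-1-\dim Y$ at $P$, then $g_1,\dots,g_r,\,(\text{the extra generator involving }y_n)$ — or more robustly $g_1,\dots,g_r$ together with the observation that $y_n$ is a nonzerodivisor — give local equations of $W$ whose Jacobian (now in all $n$ variables) still has the full expected rank $n-\dim W = n-1-\dim Y$, because adding the variable $y_n$ does not change the rank of the submatrix in the $y_1,\dots,y_{n-1}$ columns at a point where $y_n=0$, provided no new vanishing is introduced — and here the nonzerodivisor property is exactly what prevents $W$ from acquiring an embedded or thickened behavior at $P$ that would drop the rank. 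I would therefore structure the proof as: (1) set up the affine chart and reduce to local rings; (2) cite Remark \ref{rem:generic and degrevlex}(1) and Proposition \ref{prop:h generico} to get $x_n$ a nonzerodivisor; (3) apply the local algebra lemma ``nonzerodivisor $f$ with $\mathcal O/(f)$ regular $\Rightarrow$ $\mathcal O$ regular,'' checking the hypothesis $f\notin\mathfrak m^2$; (4) translate back to say $W$ is smooth at $P$. The one genuinely delicate verification is step (3)'s hypothesis, and I expect the rest to be routine bookkeeping with charts and base change.
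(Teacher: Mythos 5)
Your proposal is correct and follows essentially the same route as the paper: use that $x_n$ is a nonzerodivisor on $\mathcal O_{W,P}$ (i.e.\ $Y$ is cut out in $W$ by a single nonzerodivisor), compare Krull dimension $\dim W=\dim Y+1$ with the cotangent-space inequality $\dim_K\mathfrak m/\mathfrak m^2\geq\dim_K M/M^2-1$, and conclude regularity of $\mathcal O_{W,P}$ from regularity of $\mathcal O_{Y,P}$ --- the paper writes exactly this as a short contrapositive, whereas you package it as the standard lemma ``$f$ nonzerodivisor with $\mathcal O/(f)$ regular implies $\mathcal O$ regular.'' The worry you flag about verifying $y_n\notin\mathfrak m_P^2$ separately is not needed: as you yourself note, the two inequalities on embedding dimension and Krull dimension already force equality, so the Jacobian-criterion digression can be dropped.
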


\begin{proof}
By definition of lifting, $Y$ is a Cartier divisor in $W$. Then the dimension of the Zariski tangent space of a point $y$ in $Y$ is not lower than the dimension of the Zariski tangent space of the point $y$ in $W$ minus $1$. Indeed, if $\mathfrak m$ is the local ring at $y$ in $Y$ and $M$ is the local ring at $y$ in $W$, we have $\dim_K \frac{\mathfrak m}{\mathfrak m^2} \geq \dim_K \frac{M}{M^2}-1$. Hence, if we had $\dim_K \frac{M}{M^2} > \dim(W)$ that we would obtain $\dim_K \frac{\mathfrak m}{\mathfrak m^2}\geq \dim_K \frac{M}{M^2}-1 > \dim(W)-1=\dim(Y)$.
\end{proof}


\section{Construction of liftings of projective schemes}
\label{sec:Construction}

In this section, we obtain a constructive characterization of liftings of projective schemes by investigating relations between the notion of lifting of a saturated homogeneous ideal in $K[\mathbf x]$ (Definition \ref{def:geometric lifting per gli ideali}) and that of $x_n$-lifting of a homogeneous ideal in $K[\mathbf x]$ (Definition \ref{def:lifting}). 

In general a lifting is not a $x_n$-lifting, as shown by the following easy example. Nevertheless, we will show how to recover every lifting of a given saturated ideal by constructing the $x_n$-liftings of suitable families of ideals.

\begin{example}
The ideal 
$I=(x_0^2+x_3^2,x_0x_1,x_0x_2,x_1^2,x_1x_2)\subset K[x_0,x_1,x_2,x_3]$ is a lifting of $I'=(x_0,x_1)\subseteq K[x_0,x_1,x_2]$ but is not a $x_3$-lifting of $I'$, as one can easily verify using Theorem \ref{FM}.
\end{example}

\begin{lemma} \label{lemma:sezione}
Let $I'\subset K[\mathbf x]$ be a saturated ideal. If $I\subset A[\mathbf x,x_n]$ is a lifting of $I'$ then $I'=\Bigl(\frac{(I,x_n)}{(x_n)}\Bigr)^{\sat}\cap K[\mathbf x]=\Bigl(\frac{(I,x_n)}{(x_n)} \cap K[\mathbf x]\Bigr)^{\sat}$. 
\end{lemma}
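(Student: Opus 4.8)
The plan is to deduce both equalities directly from condition~(b) of Definition~\ref{def:geometric lifting per gli ideali}, together with the extension and contraction identities collected in the last item of Remark~\ref{rem:generic and degrevlex}. Those identities are stated there for the inclusion $K[\mathbf x,x_n]\subset A[\mathbf x,x_n]$, but, as noted at the beginning of the paper, they hold verbatim for $K[\mathbf x]\subset A[\mathbf x]$ as well; this is exactly what makes the argument purely formal. For brevity I would write $J_0:=\frac{(I,x_n)}{(x_n)}$, regarded as a homogeneous ideal of $A[\mathbf x]$ through the canonical isomorphism $A[\mathbf x,x_n]/(x_n)\simeq A[\mathbf x]$, under which $K[\mathbf x]$ is the usual subring of $A[\mathbf x]$. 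By condition~(b) of Definition~\ref{def:geometric lifting per gli ideali} one has $J_0^{\sat}=I'A[\mathbf x]$.

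Next I would chain together the relevant identities:
\[
I' \;=\; I'A[\mathbf x]\cap K[\mathbf x] \;=\; J_0^{\sat}\cap K[\mathbf x] \;=\; \bigl(J_0\cap K[\mathbf x]\bigr)^{\sat}.
\]
Here the first equality is $L=LA[\mathbf x]\cap K[\mathbf x]$ applied to $L=I'$, the second is condition~(b) recalled above, and the third is the commutation of saturation with contraction, $\mathfrak a^{\sat}\cap K[\mathbf x]=(\mathfrak a\cap K[\mathbf x])^{\sat}$, applied to the homogeneous ideal $\mathfrak a=J_0$ of $A[\mathbf x]$ — each of the first and third being the $A[\mathbf x]$-version of the corresponding statement in Remark~\ref{rem:generic and degrevlex}. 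Comparing the first term of this chain with the middle one and with the last one yields exactly the asserted equalities
\[
I'=\Bigl(\frac{(I,x_n)}{(x_n)}\Bigr)^{\sat}\cap K[\mathbf x]=\Bigl(\frac{(I,x_n)}{(x_n)}\cap K[\mathbf x]\Bigr)^{\sat}.
\]

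I do not expect any real obstacle here: the whole content sits in Remark~\ref{rem:generic and degrevlex}, and the rest is bookkeeping. The only two points deserving a line of justification are: (i) that the extension–contraction identities of that remark, although written for $A[\mathbf x,x_n]$ over $K[\mathbf x,x_n]$, transfer to $A[\mathbf x]$ over $K[\mathbf x]$, which is precisely the convention adopted in Section~1; and (ii) that, under the canonical isomorphism $A[\mathbf x,x_n]/(x_n)\simeq A[\mathbf x]$, the subring $K[\mathbf x]$ of $A[\mathbf x]$ really corresponds to the image of $K[\mathbf x,x_n]$, so that the contraction $J_0\cap K[\mathbf x]$ has the expected meaning. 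Both are immediate, so the proof is essentially just the displayed chain of equalities.
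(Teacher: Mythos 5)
Your proof is correct and follows exactly the same route as the paper, which simply says the result is immediate from Definition~\ref{def:geometric lifting per gli ideali}(b) and the extension--contraction identities of Remark~\ref{rem:generic and degrevlex}(\ref{item:estensioni e contrazioni}); you have merely written out the chain of equalities the paper leaves implicit. The two points you flag for justification (transferring the identities to $K[\mathbf x]\subset A[\mathbf x]$, and identifying $K[\mathbf x]$ inside $A[\mathbf x,x_n]/(x_n)$) are indeed covered by the conventions of Section~1, so nothing is missing.
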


\begin{proof}
Definition \ref{def:geometric lifting per gli ideali} 
of lifting immediately implies the thesis by Remark \ref{rem:generic and degrevlex}\eqref{item:estensioni e contrazioni}. 
\end{proof}

\begin{proposition}\label{prop:geometric lifting}
Let $I'\subseteq K[\mathbf x]$ be a homogeneous saturated ideal. A homogeneous ideal $I\subseteq A[\mathbf x,x_n]$ is a lifting of $I'$ if and only if there exists a homogeneous ideal $H\subseteq K[\mathbf x]$ such that $H^{\sat}=I'$ and $I$ is a $x_n$-lifting of $H$.
\end{proposition}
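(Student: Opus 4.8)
The plan is to prove the two implications separately, in each case producing the homogeneous ideal $H\subseteq K[\mathbf x]$ that witnesses the equivalence.

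\emph{Sufficiency.} Suppose $H\subseteq K[\mathbf x]$ is homogeneous with $H^{\sat}=I'$ and $I$ is an $x_n$-lifting of $H$. I would first check that $I$ is saturated and that $x_n$ is generic for $I$, so that Definition~\ref{def:geometric lifting per gli ideali} can be applied. By Theorem~\ref{FM} the reduced Gr\"obner basis of $I$ is monic and $\In(I)$ is generated by the same terms as $\In(H)$, which involve only the variables $\mathbf x$; hence $x_n$ is generic for $\In(I)$, and Remark~\ref{rem:generic and degrevlex}\eqref{item:remgeneric} forces $x_n$ to be generic for $I$. Since $x_n$ is a non-zerodivisor modulo $I$ by condition~(a) of Definition~\ref{def:lifting}, Remark~\ref{rem:generic and degrevlex}(1) gives $I^{\sat}=(I:x_n^\infty)=I$. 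Finally condition~(b) of Definition~\ref{def:lifting} says $(I,x_n)/(x_n)=HA[\mathbf x]$, so by Remark~\ref{rem:generic and degrevlex}\eqref{item:estensioni e contrazioni} we obtain $\big((I,x_n)/(x_n)\big)^{\sat}=(HA[\mathbf x])^{\sat}=H^{\sat}A[\mathbf x]=I'A[\mathbf x]$. Thus $I$ meets all the conditions of Definition~\ref{def:geometric lifting per gli ideali}, i.e.\ $I$ is a lifting of $I'$.

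\emph{Necessity.} Suppose $I$ is a lifting of $I'$ and put $H:=\big((I,x_n)/(x_n)\big)\cap K[\mathbf x]$. By Lemma~\ref{lemma:sezione}, $H^{\sat}=I'$, so it remains to verify that $I$ is an $x_n$-lifting of $H$. Condition~(a) of Definition~\ref{def:lifting} is immediate: $x_n$ is generic for $I$ and $I=I^{\sat}$, so $x_n$ is a non-zerodivisor modulo $I$. For condition~(b) I must prove $(I,x_n)/(x_n)=HA[\mathbf x]$, that is, that the ideal $(I,x_n)/(x_n)$ of $A[\mathbf x]$ is extended from $K[\mathbf x]$. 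Here I would reason via degrevlex Gr\"obner bases: since $x_n$ is generic for $I$, the minimal generators of $\In(I)$ are not divisible by $x_n$, so if $\{g_1,\dots,g_r\}$ is the reduced Gr\"obner basis of $I$ then $x_n\nmid\In(g_i)$ for every $i$ and $\{g_i(x_0,\dots,x_{n-1},0)\}_i$ is a Gr\"obner basis of $(I,x_n)/(x_n)$ with the same initial ideal. Using that the saturation $\big((I,x_n)/(x_n)\big)^{\sat}=I'A[\mathbf x]$ is extended from the \emph{saturated} ideal $I'$ of $K[\mathbf x]$ and therefore agrees with $(I,x_n)/(x_n)$ in all sufficiently large degrees, I would then descend in degree and compare the reduced Gr\"obner basis of $(I,x_n)/(x_n)$ with that of $I'A[\mathbf x]$ (which has coefficients in $K[\mathbf x]$), concluding that the reduced Gr\"obner basis of $(I,x_n)/(x_n)$ can be chosen over $K[\mathbf x]$. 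This yields $(I,x_n)/(x_n)=HA[\mathbf x]$ and completes the proof.

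I expect the decisive difficulty to be the last step of the necessity part: upgrading ``the saturation of $(I,x_n)/(x_n)$ is extended from a saturated ideal of $K[\mathbf x]$'' to ``$(I,x_n)/(x_n)$ is itself extended from $K[\mathbf x]$''. Everything else is bookkeeping with Lemma~\ref{lemma:sezione}, Theorem~\ref{FM} and Remark~\ref{rem:generic and degrevlex}, whereas this descent is where one genuinely uses both that $x_n$ is generic --- so that the passage to $x_n=0$ is controlled at the level of degrevlex Gr\"obner bases --- and that $I'$ is saturated, which pins down the top degrees of $(I,x_n)/(x_n)$ over $K[\mathbf x]$.
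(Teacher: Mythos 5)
Your sufficiency argument is sound and in fact supplies more detail than the paper does (in particular, the chain $x_n$ generic for $\In(I)$ $\Rightarrow$ $x_n$ generic for $I$ $\Rightarrow$ $I^\sat=(I:x_n^\infty)=I$ via Remark~\ref{rem:generic and degrevlex}(1)--(2) is correct). You have also put your finger on exactly the right spot in the necessity direction: the claim that must be proved is $(I,x_n)/(x_n)=HA[\mathbf x]$ with $H:=\bigl((I,x_n)/(x_n)\bigr)\cap K[\mathbf x]$, i.e.\ that the ideal $(I,x_n)/(x_n)$ of $A[\mathbf x]$ is extended from $K[\mathbf x]$. The paper's own proof merely cites Remark~\ref{rem:generic and degrevlex}(3) at this point, but that remark only records formulas for extension and contraction of ideals that \emph{are} extended; it does not say that $(M\cap K[\mathbf x])A[\mathbf x]=M$ for an arbitrary ideal $M\subseteq A[\mathbf x]$, so it cannot by itself yield the needed equality.

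The bad news is that the Gr\"obner-basis ``descent in degree'' you propose cannot close this gap, because the extension claim genuinely fails once $A$ is not a field. The constraint $M^\sat=I'A[\mathbf x]$ only pins down $M$ in large degrees; below the satiety the graded pieces $M_t$ can twist over $\Spec(A)$, and then the reduced monic Gr\"obner basis of $M$ has coefficients in $A\setminus K$. Concretely, take $A=K[y]$, $I'=(x_0,x_1^2)\subset K[x_0,x_1,x_2]$, and
\[
I=(x_0^2+yx_1^2,\ x_0x_1,\ x_0x_2,\ x_1^3,\ x_1^2x_2)\ \subset\ K[y][x_0,x_1,x_2,x_3].
\]
This is already a monic reduced degrevlex Gr\"obner basis; $\In(I)=(x_0^2,x_0x_1,x_0x_2,x_1^3,x_1^2x_2)$ is saturated in four variables and has no generator divisible by $x_3$, so $I$ is saturated and $x_3$ is generic for $I$. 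Setting $x_3=0$ gives $M=(x_0^2+yx_1^2,x_0x_1,x_0x_2,x_1^3,x_1^2x_2)\subset K[y][x_0,x_1,x_2]$, and one checks directly that $\mathfrak m^2x_0\subseteq M$ and $\mathfrak m x_1^2\subseteq M$, hence $M^\sat=(x_0,x_1^2)\otimes_K A=I'A[\mathbf x]$; so $I$ is a lifting of $I'$ in the sense of Definition~\ref{def:geometric lifting per gli ideali}. But $M_2=A\langle x_0^2+yx_1^2,\ x_0x_1,\ x_0x_2\rangle$ has $A$-rank $3$, whereas $M_2\cap K[\mathbf x]_2=K\langle x_0x_1,x_0x_2\rangle$ has dimension $2$, so $(M\cap K[\mathbf x])A[\mathbf x]\subsetneq M$ and no $H\subseteq K[\mathbf x]$ can satisfy $HA[\mathbf x]=M$. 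Thus $I$ is a lifting of $I'$ that is not an $x_n$-lifting of any ideal of $K[\mathbf x]$, and the ``only if'' half of the proposition does not hold over this base ring.

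The upshot: your instinct that the decisive difficulty is the descent step was correct, but that step is not merely hard --- it appears to be false for a general Noetherian $K$-algebra $A$, and the paper's one-line appeal to Remark~\ref{rem:generic and degrevlex}(3) hides this. The equivalence is unproblematic when $A=K$ (then every ideal of $A[\mathbf x]$ is trivially extended), which is the setting that actually matters for identifying the $K$-points of the parametrizing schemes. If you want to salvage the statement over general $A$, you would need to add a hypothesis that forces $(I,x_n)/(x_n)$ to be extended --- for instance, that $(I,x_n)/(x_n)$ itself is a constant (extended) family, which is precisely what Definition~\ref{def:lifting}(b) demands and is strictly stronger than condition~(b) of Definition~\ref{def:geometric lifting per gli ideali}. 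I would flag this to the authors rather than try to complete the proof as stated.
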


\begin{proof}
First assume that $I$ is a lifting of $I'$ and take $H:=\frac{(I,x_n)}{(x_n)}\cap K[\mathbf x]$. Then, we have $H^{\sat}=I'$ by Lemma \ref{lemma:sezione}. Furthermore, it is immediate that $I$ is a $x_n$-lifting of $H$,  because $HA[\mathbf x]=\frac{(I,x_n)}{(x_n)}$ by Remark \ref{rem:generic and degrevlex}\eqref{item:estensioni e contrazioni} and $x_n$ is a non-zero divisor in $\Aox/I$ by definition.

Conversely, let $I\subset A[\mathbf x,x_n]$ be an ideal which is a $x_n$-lifting of a homogeneous ideal $H\subseteq K[\mathbf x]$ such that $H^{\sat}=I'$. Then, $I$ is a lifting of $I'$, because $x_n$ is generic for $I$, so $I$ is saturated, and $\frac{(I,x_n)}{(x_n)} \simeq H\otimes_K A$ implies $((I,x_n)/(x_n))^\sat=I'A[\mathbf x,x_n]$, by Remark \ref{rem:generic and degrevlex}\eqref{item:estensioni e contrazioni}.
\end{proof}

\begin{theorem} \label{th:parametrizzazione}
Let $I'\subseteq A[\mathbf x]$ be a homogeneous saturated ideal and $I\subseteq A[\mathbf x,x_n]$ a lifting of $I'$. The locus of liftings of $I'$ in $\underline{\St}_{\In(I)}(A)$ is parameterized by an affine scheme obtained by linear sections of ${\St}_{\In(I)}$.
\end{theorem}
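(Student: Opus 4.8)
The plan is to locate, inside the \Gr stratum $\St_J$ with $J:=\In(I)$, the subset of ideals that are liftings of $I'$, and to show that it is cut out of $\St_J$ by affine-linear equations in the coordinates $C_{\alpha\gamma}$. First I would collect the consequences of the hypothesis that $I$ is a lifting of $I'$. Since $x_n$ is generic for $I$, it is generic for $J=\In(I)$ by Remark~\ref{rem:generic and degrevlex}\eqref{item:remgeneric}; this forces every term of $B_J$ to be free of $x_n$: if $x_n\mid x^\alpha\in B_J$, a monic element $f$ of the reduced \Gr basis of $I$ (which exists, as $I\in\St_J(A)$) with $\In(f)=x^\alpha$ is itself divisible by $x_n$ (again Remark~\ref{rem:generic and degrevlex}\eqref{item:remgeneric}), and since $x_n$ is a non-zero-divisor modulo the saturated ideal $I$ one gets $f/x_n\in I$, whence the proper divisor $x^\alpha/x_n$ lies in $J$, a contradiction. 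Writing $J':=J\cap K[\mathbf x]$ one then has $K[\mathbf x,x_n]/J=(K[\mathbf x]/J')[x_n]$, so $\Delta p_{A[\mathbf x,x_n]/I}=p_{K[\mathbf x]/J'}$; combined with the lifting condition $\Delta p_{A[\mathbf x,x_n]/I}=p_{Y}=p_{K[\mathbf x]/I'}$ this yields $p_{K[\mathbf x]/J'}=p_{K[\mathbf x]/I'}$. Finally I would record, for an arbitrary $\tilde I\in\underline{\St}_J(A)$ (classified by $\psi\colon K[C_J]/\mathfrak a_J\to A$, with associated monic marked basis $\{g_\alpha:=\psi(f_\alpha)\}$), that $\tilde I$ is saturated with $x_n$ generic --- because $x_n$ is generic for $J=\In(\tilde I)$ and, $B_J$ being $x_n$-free, $\In(\tilde I:x_n^\infty)=\In(\tilde I)=J$ --- and that the ideal $\bar{\tilde I}:=\bigl(\{g(x_0,\dots,x_{n-1},0):g\in\tilde I\}\bigr)\subseteq A[\mathbf x]$ is generated by the $x_n$-free parts $g_\alpha^{(0)}:=g_\alpha(x_0,\dots,x_{n-1},0)$, is a monic-\Gr-basis ideal with $\In(\bar{\tilde I})=J'A[\mathbf x]$, and hence has the same Hilbert polynomial as $I'A[\mathbf x]$.

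Granting this, I would argue as follows. On $\underline{\St}_J$ the properties ``saturated'' and condition~(a) of Definition~\ref{def:geometric lifting per gli ideali} always hold, so $\tilde I\in\underline{\St}_J(A)$ is a lifting of $I'$ precisely when condition~(b$'$) holds, namely $(\bar{\tilde I})^\sat=I'A[\mathbf x]$. Now $I'A[\mathbf x]$ is saturated by Remark~\ref{rem:generic and degrevlex}\eqref{item:estensioni e contrazioni}, and a homogeneous ideal contained in a saturated ideal with the same Hilbert polynomial must coincide with it (otherwise their quotient would be a nonzero finite-length submodule of the saturated quotient ring). Since $\bar{\tilde I}\subseteq(\bar{\tilde I})^\sat$ and both sides have Hilbert polynomial $p_{K[\mathbf x]/I'}$, condition~(b$'$) is equivalent to the plain inclusion $\bar{\tilde I}\subseteq I'A[\mathbf x]$, i.e.\ to $g_\alpha^{(0)}\in I'A[\mathbf x]$ for every $x^\alpha\in B_J$. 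Using the universal polynomials $f_\alpha^{(0)}:=x^\alpha+\sum_{x^\gamma\in\mathcal N(J)_{|\alpha|},\,x_n\nmid x^\gamma}C_{\alpha\gamma}\,x^\gamma\in K[C_J][\mathbf x]$ and the normal form $\Nf$ modulo $I'K[C_J][\mathbf x]$, which is $K[C_J]$-linear, one has $\Nf(f_\alpha^{(0)})=\Nf(x^\alpha)+\sum_\gamma C_{\alpha\gamma}\,\Nf(x^\gamma)$, whose coefficients with respect to $\mathbf x$ are affine-linear forms in the $C_{\alpha\gamma}$. Let $\mathfrak l\subseteq K[C_J]$ be the ideal they generate. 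Then $g_\alpha^{(0)}\in I'A[\mathbf x]$ for all $\alpha$ if and only if $\psi$ kills $\mathfrak l$; hence the liftings of $I'$ in $\underline{\St}_J(A)$ are exactly the ideals classified by morphisms factoring through $K[C_J]/(\mathfrak a_J+\mathfrak l)$, and so are parameterized by the affine scheme $\Spec\bigl(K[C_J]/(\mathfrak a_J+\mathfrak l)\bigr)$, the linear section of $\St_J$ cut out by $\mathfrak l$.

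The step I expect to be the crux is the reduction of condition~(b$'$) to the inclusion $\bar{\tilde I}\subseteq I'A[\mathbf x]$: this is the only place where the hypothesis that $I$ is a lifting is genuinely used, via the identity $p_{K[\mathbf x]/J'}=p_{K[\mathbf x]/I'}$ together with the fact --- resting on $\bar{\tilde I}$ carrying a monic \Gr basis with fixed initial ideal $J'A[\mathbf x]$ --- that the Hilbert polynomial of $\bar{\tilde I}$ is the same for every $\tilde I\in\underline{\St}_J(A)$. The remaining points are routine: that $\{g_\alpha^{(0)}\}$ is a \Gr basis with initial ideal $J'A[\mathbf x]$, and that $\bar{\tilde I}=\{g(x_0,\dots,x_{n-1},0):g\in\tilde I\}$, follow from $B_J$ involving only the variables $\mathbf x$, so that leading terms, least common multiples and reduction steps for $\{g_\alpha\}$ survive unchanged the substitution $x_n=0$; and the saturatedness of $\tilde I$, needed so that ``saturated'' adds no constraint, comes from $\tilde I^\sat=(\tilde I:x_n^\infty)$ (Remark~\ref{rem:generic and degrevlex}(1)) and $\In(\tilde I:x_n^\infty)=\In(\tilde I)$. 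One could instead reach the conclusion through Proposition~\ref{prop:geometric lifting} and Theorem~\ref{FM}, realizing the locus as a union of the closed subschemes $\mathrm{L}_H\subseteq\St_J$ of $x_n$-liftings and checking their equations are linear; the argument above has the advantage of not requiring a uniqueness statement for such an $H$.
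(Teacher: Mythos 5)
Your proposal is correct and reaches the same equations as the paper, but it gets there by a cleaner, more self-contained route. The paper's proof first invokes Proposition~\ref{prop:geometric lifting} and Theorem~\ref{FM} to recast ``$\tilde I$ is a lifting of $I'$'' as ``$\tilde I$ is a $x_n$-lifting of some $H\subset K[\mathbf x]$ with $H^\sat=I'$'', then parameterizes the admissible $H$'s inside $\underline{\St}_{J\cap K[\mathbf x]}(K)$ (the condition $H\subset I'$ gives the linear ideal $\mathfrak b_{J\cap K[\mathbf x]}$), and finally lifts this to $\underline{\St}_J(A)$ via the universal basis \eqref{base di I}, using that $\mathfrak a_{J\cap K[\mathbf x]}A[C_J]\subset\mathfrak a_J$. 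You instead observe directly that every $\tilde I\in\underline{\St}_J(A)$ is automatically saturated with $x_n$ generic (so condition (a) of Definition~\ref{def:geometric lifting per gli ideali} is free), that the $x_n$-free part $\bar{\tilde I}$ has initial ideal $J\cap K[\mathbf x]$ and hence the same Hilbert polynomial as $I'A[\mathbf x]$, and therefore that condition (b$'$) reduces to the bare containment $\bar{\tilde I}\subseteq I'A[\mathbf x]$ by the finite-length argument. The resulting constraints --- that the $J$-normal form modulo $I'$ of each universal $x_n$-free polynomial $f_\alpha^{(0)}$ vanish --- are exactly the paper's $\mathfrak b_{J\cap K[\mathbf x]}$, affine-linear in the $C_{\alpha\gamma}$ for the same reason (the reduced Gr\"obner basis of $I'$ has coefficients in $K$). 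What your route buys is the elimination of the intermediate ideal $H$ and of the appeal to Theorem~\ref{FM}; what the paper's route buys is closer alignment with the $x_n$-lifting machinery (Definition~\ref{def:lifting}, $\mathrm{L}_H$) used elsewhere.

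One wording glitch worth fixing: the sentence ``a homogeneous ideal contained in a saturated ideal with the same Hilbert polynomial must coincide with it'' is false as literally stated (take $(x_0^2,x_0x_1)\subset(x_0)$). The correct statement, which your parenthetical is clearly driving at, is that its \emph{saturation} must coincide with the larger ideal, since the quotient would otherwise be a nonzero bounded-degree submodule of a saturated quotient ring; this is all you actually use, applied to deduce $(\bar{\tilde I})^\sat=I'A[\mathbf x]$ from $\bar{\tilde I}\subseteq I'A[\mathbf x]$.
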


\begin{proof}
Let $J':=\In(I')\subset K[\mathbf x]$ and $J:=\In(I)\subset A[\mathbf x,x_n]$. 
By Proposition \ref{prop:geometric lifting}, $I$ is a $x_n$-lifting of a homogeneous ideal $H\subset K[\mathbf x]$ with $H^{\sat}=I'$. Hence, by Theorem \ref{FM}, $\In(H)$ has the same generators as $J$, so $\frac{(J,x_n)}{(x_n)}=\In(H)\otimes_K A$ and $\In(H)=J\cap K[\mathbf x]$. 

Let $\mathfrak a_J \subset A[C_J]$ be the defining ideal of the Gr\"obner stratum  $\mathrm{St}_{J}$ and $\mathfrak a_{J\cap K[\mathbf x]} \subset K[C_{J\cap K[\mathbf x]}]$ the defining ideal of the Gr\"obner stratum $\mathrm{St}_{J\cap K[\mathbf x]}$, where $C_{J\cap K[\mathbf x]}\subseteq C_J$.

The ideal $H\subset K[\mathbf x]$ is characterized by the following two conditions. The first condition is that $H$ belongs to the family $\underline{\St}_{J\cap K[\mathbf x]}(K)$, so the reduced Gr\"obner basis of $H$ consists of polynomials of the following type
\begin{equation}\label{base di H}
f_\beta = x^\beta +\sum_{x^\beta >x^\gamma \in \mathcal N(\In(H))_{\vert \beta\vert}} C_{\beta\gamma} x^\gamma, \quad f_\beta  \in K[C_{J\cap K[\mathbf x]}][\mathbf x],
\end{equation}
for every term $x^\beta$ minimal generator of $J$. The second condition is that the polynomials $f_\beta$ belong to $I'$, because $H$ is contained in $I'$. This second condition implies that the saturation of $H$ is $I'$ because $K[\mathbf x]/I'$ and $K[\mathbf x]/H$ have the same Hilbert polynomial, by the first condition.

By construction, $\In(H)$ is contained in $J'$. Hence, we have $\mathcal N(J')\subseteq\mathcal N(\In(H))$ and can obtain a $J$-reduced form $\overline{f_\beta}$ modulo $I'$ of the polynomial $f_\beta$ using the reduced Gr\"obner basis of $I'$. Imposing that $\overline{f_\beta}$ is zero we obtain that $H$ is contained in $I'$ and collect some new constraints on the coefficients in the parameters $C_{\beta\gamma}$ in $C_{J\cap K[\mathbf x]}$. Let $\mathfrak b_{J\cap K[\mathbf x]} \subset K[C_{J\cap K[\mathbf x]}]$ be the ideal generated by these constraints, for every $x^\beta\in B_J$. The ideal $\mathfrak a_{J\cap K[\mathbf x]} + \mathfrak b_{J\cap K[\mathbf x]} \subset K[C_{J\cap K[\mathbf x]}]$, hence the affine scheme $\Spec\left(\frac{ K[C_{J\cap K[\mathbf x]}]}{\mathfrak a_{J\cap K[\mathbf x]} + \mathfrak b_{J\cap K[\mathbf x]}}\right)$, parameterizes the locus in $\underline{\St}_{J\cap K[\mathbf x]}(K)$ of all the ideals $H\subset K[\mathbf x]$ such that $H^\sat=I'$. 

Finally, we apply Theorem \ref{FM} and hence consider 
\begin{equation}\label{base di I}
g_\beta :=f_\beta + \sum_{x^\delta \in \mathcal N(J)_{\vert\beta\vert-1}} C_{\beta\delta} x_n x^\delta, \quad g_\beta  \in A[C_J][\mathbf x,x_n],
\end{equation}
for every term $x^\beta$ minimal generator of $J$. The set $\{g_\beta\}_{x^\beta \in B_J}$ is a Gr\"obner basis with initial ideal $J$ modulo the ideal $\mathfrak a_J\subset A[C_J]$ which defines the Gr\"obner stratum $\mathrm{St}_{J}$.

We now observe that if the set of polynomials $g_\beta$ is a Gr\"obner basis then also the set of polynomials $f_\beta$ is a Gr\"obner basis, due to the hypothesis on $J$ and $\In(H)$. This fact means that the ideal $\mathfrak a_{J\cap K[\mathbf x]} A[C_J]$ is contained in $\mathfrak a_J$. Then, the ideal $\mathfrak a_J+\mathfrak b_{J\cap K[\mathbf x]}  A[C_J]$, hence the affine scheme $\Spec\left(\frac{A[C_J]}{\mathfrak a_J+\mathfrak b_{J\cap K[\mathbf x]}  A[C_J]}\right)$, parameterizes the locus of the liftings of $I'$ in the family $\underline{\St}_{J}(A)$. 

It remains to show that the constraints we obtain by rewriting the polynomials $f_\beta$ of \eqref{base di H} by the reduced Gr\"obner basis of $I'$ are linear, i.e.~the ideal $\mathfrak b_{J\cap K[\mathbf x]}$ has linear generators. This fact is immediate, because the coefficients of the polynomials of the reduced Gr\"obner basis of $I'$ belong to the field $K$.
\end{proof}

Theorem \ref{th:parametrizzazione} describes the locus of liftings of a homogeneous saturated ideal $I'\subset K[\mathbf x]$ in a given Gr\"obner stratum. In the further hypothesis that the variable $x_{n-1}$ is generic for $I'$,  we can recognize what Gr\"obner strata are candidate to contain these liftings.

\begin{theorem}\label{th:dove}
Let $I'\subseteq K[\mathbf x]$ be a homogeneous saturated ideal. If $x_{n-1}$ is generic for $I'$, then the liftings of $I'$ belong to the Gr\"obner stratum over a monomial lifting $J\subset A[\mathbf x,x_n]$ of \ $\mathrm{in}(I')$.
\end{theorem}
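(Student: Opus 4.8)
The plan is to use the structure of $x_n$-liftings combined with the genericity of $x_{n-1}$ to identify the initial ideal $J = \In(I)$ of a lifting $I$ of $I'$. First I would invoke Proposition \ref{prop:geometric lifting}: a lifting $I \subseteq A[\mathbf x,x_n]$ of $I'$ is an $x_n$-lifting of some homogeneous ideal $H \subseteq K[\mathbf x]$ with $H^{\sat} = I'$. By Theorem \ref{FM}, $\In(I)$ is generated by the same terms as $\In(H)$, so it suffices to understand $\In(H)$ as an ideal of $K[\mathbf x]$ and then show that the monomial ideal $J \subseteq A[\mathbf x,x_n]$ it generates is a monomial lifting (in the sense of Definition \ref{def:lifting} applied to monomial ideals) of $\In(I')$. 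Note that since $x_n$ does not divide any generator of $\In(H)$, condition (a) in Definition \ref{def:lifting} is automatic for $J$ by Remark \ref{rem:generic and degrevlex}\eqref{item:remgeneric}, and condition (b$'$) amounts to checking that the dehomogenization of $J$ at $x_n$, namely $J \cap K[\mathbf x]$, equals $\In(H)$ — which it does by the proof of Theorem \ref{th:parametrizzazione} (there it is shown $\In(H) = J \cap K[\mathbf x]$ and $\frac{(J,x_n)}{(x_n)} = \In(H) \otimes_K A$). So the real content is the \emph{algebraic} statement: $\In(H)$ is a lifting of $\In(I')$ in $K[\mathbf x]$, i.e.\ the monomial ideal $J = \In(H)A[\mathbf x,x_n]$ satisfies $J \cap K[\mathbf x] = \In(I')$, where the latter is the initial ideal with respect to degrevlex with $x_{n-1}$ smallest in $K[\mathbf x]$.

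The key step is thus to relate $\In(H)$ to $\In(I')$ using that $I' = H^{\sat}$ and that $x_{n-1}$ is generic for $I'$. The idea is the well-known good behavior of degrevlex under saturation with respect to the smallest variable: by Remark \ref{rem:generic and degrevlex}(1), since $x_{n-1}$ is generic for $I'$ (hence, by Remark \ref{rem:generic and degrevlex}\eqref{item:remgeneric}, for $\In(I')$), we have $I' = (H : x_{n-1}^\infty)$ inside $K[\mathbf x]$; and one knows that for degrevlex with $x_{n-1}$ the last variable, $\In(H : x_{n-1}^\infty) = \In(H) : x_{n-1}^\infty = (\In(H))^{\sat_{x_{n-1}}}$. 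So $\In(I') = \In(H) : x_{n-1}^\infty$. What I need for the theorem is that $\In(H)$ is itself a lifting of $\In(H):x_{n-1}^\infty = \In(I')$ along the variable $x_n$ — but $x_n$ plays no role inside $K[\mathbf x]$! The point is rather that $J = \In(H)A[\mathbf x,x_n]$ is, by Theorem \ref{FM}, generated by terms none of which involve $x_n$, so the lifting relation for $J$ over $\In(I')$ is precisely the statement $J \cap K[\mathbf x] = \In(I')$, i.e.\ $\In(H) = \In(I')$ as monomial ideals of $K[\mathbf x]$. Hence the crux reduces to proving $\In(H) = \In(I')$.

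To prove $\In(H) = \In(I')$: since $H \subseteq I'$ and both are homogeneous, $\In(H) \subseteq \In(I')$ always. For the reverse, use that $H$ is $m$-saturated for $m$ large, so $H_t = I'_t$ for $t \gg 0$, giving $\In(H)_t = \In(I')_t$ for $t \gg 0$; then combine with the genericity of $x_{n-1}$ for both ideals, which forces $\In(H)$ and $\In(I')$ to agree in low degrees too. Concretely: for any term $x^\gamma \in \In(I')$, genericity of $x_{n-1}$ for $\In(I')$ means $x_{n-1}$ is a nonzerodivisor on $K[\mathbf x]/\In(I')^{\sat_{x_{n-1}}}$, and since $\In(I') = \In(H) : x_{n-1}^\infty$ we get $x_{n-1}^k x^\gamma \in \In(H)$ for some $k$; genericity of $x_{n-1}$ for $\In(H)$ (inherited because $x_{n-1}$ is generic for $I' = H^{\sat}$, and one checks genericity of $x_{n-1}$ for $H$ from genericity for $H^{\sat}$ using that $H$ and $H^{\sat}$ agree in high degree, hence $H : x_{n-1} = H$) then lets us divide back down to conclude $x^\gamma \in \In(H)$. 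Thus $\In(I') \subseteq \In(H)$ and equality holds. Putting it together: $J = \In(I') A[\mathbf x,x_n]$, which is trivially a monomial lifting of $\In(I')$, and $I$ lies in $\St_J = \St_{\In(I)}$, as desired.

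The main obstacle I anticipate is the careful bookkeeping around \emph{which} variable's genericity is being used where: $x_n$ is generic for $I$ in $A[\mathbf x,x_n]$ (giving the $x_n$-lifting structure), while $x_{n-1}$ is generic for $I'$ in $K[\mathbf x]$, and I must transfer the latter to genericity of $x_{n-1}$ for the auxiliary ideal $H$ (not just for its saturation $I'$) in order to run the saturation/initial-ideal commutation cleanly. This transfer uses that $H$ and $I' = H^{\sat}$ coincide in all sufficiently high degrees together with the fact that $x_{n-1}$ being a nonzerodivisor in high degrees on $K[\mathbf x]/H$ is equivalent to $(H : x_{n-1}) = H$ for a homogeneous ideal whose saturation has $x_{n-1}$ as nonzerodivisor — a standard but slightly delicate point that I would state as a short lemma or dispatch with a reference to \citep{BS}.
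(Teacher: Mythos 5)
There is a genuine gap: your argument purports to prove $\In(H)=\In(I')$, which is strictly stronger than what the theorem asserts and is in fact \emph{false}. The theorem only claims that $J=\In(I)$ is a monomial \emph{lifting} of $\In(I')$ in the sense of Definition~\ref{def:geometric lifting per gli ideali}, i.e.\ that $\bigl((J,x_n)/(x_n)\bigr)^{\sat}=\In(I')\otimes_K A$; equivalently, $(\In(H))^{\sat}=\In(I')$ in $K[\mathbf x]$. Your stronger conclusion would force $J=\In(I')A[\mathbf x,x_n]$ for every lifting $I$, contradicting Example~\ref{ex:un primo calcolo}: there, for $p(t)=t^2+4t+1$, the liftings of $I'$ are spread over Gr\"obner strata on five distinct quasi-stable ideals $J^{(1)},\dots,J^{(5)}$, none of which equals $\In(I')K[\mathbf x,x_4]$ (e.g.\ $x_0^2\in J'=\In(I')$ but $x_0^2\notin J^{(1)}\cap K[\mathbf x]=\In(H)$).

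The step that fails is your parenthetical ``\dots hence $H:x_{n-1}=H$.'' Genericity of $x_{n-1}$ for $I'=H^{\sat}$ (in the paper's sense, i.e.\ non-zerodivisor on $K[\mathbf x]/H^{\sat}$) transfers trivially to $H$ by definition, but it does \emph{not} give $H:x_{n-1}=H$: whenever $H\subsetneq H^{\sat}$, the variable $x_{n-1}$ is a zerodivisor on $K[\mathbf x]/H$ (take $f\in H^{\sat}\setminus H$ of maximal degree; then $x_{n-1}f\in H$ yet $f\notin H$), so the division-back step $x_{n-1}^k x^\gamma\in\In(H)\Rightarrow x^\gamma\in\In(H)$ collapses. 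This is exactly the difference between showing $\In(H)=\In(I')$ (false) and the correct target $\In(H)^{\sat}=\In(I')$. The paper's proof avoids this pitfall entirely: it never tries to compare $\In(H)$ to $\In(I')$ degree by degree but instead truncates at a high degree $s$, uses \cite[Lemma 2.2]{BS} to commute initial ideal with passing modulo $x_n$, obtains $(J_{\geq s},x_n)/(x_n)=J'_{\geq s}\otimes_K A$, and then concludes by observing that $J'=\In(I')$ is already saturated (this is where genericity of $x_{n-1}$ enters, via degrevlex commuting with saturation). If you want to salvage your approach, you would need to replace the claim $\In(H)=\In(I')$ with $\In(H)^{\sat}=\In(I')$ and prove it, which brings you essentially back to the paper's truncation argument.
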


\begin{proof}
It is enough to prove that if $I\subset A[\mathbf x,x_n]$ is a lifting of $I'$ and $x_{n-1}$ is generic for $I'$, then $\Bigl(\frac{(\In(I),x_n)}{(x_n)}\Bigr)^{\sat}=\In(I')\otimes_K A$. Hence, we will conclude by applying Theorem \ref{th:parametrizzazione} and observing that if $x_n$ is generic for $I$ then it is a non-zero divisor for $\In(I)$.

Let $J':=\In(I')$ and $J:=\In(I)$. By definition of lifting we have $\Bigl(\frac{(I,x_n)}{(x_n)}\Bigr)^{\sat}=I'\otimes_K A$. Hence, there exists an integer $s\geq 0$ such that
$\frac{(I_{\geq s},x_n)}{(x_n)}=\Bigl(\frac{(I,x_n)}{(x_n)}\Bigr)_{\geq s}=I'_{\geq s}\otimes_K A$. 
By \cite[Lemma 2.2]{BS}, we have $\In(I_{\geq s},x_n)=(\In(I_{\geq s}),x_n)$ and obtain
\begin{equation}\label{eq:saturation}
\frac{(J_{\geq s},x_n)}{(x_n)}={J'}_{\geq s}\otimes_K A
\end{equation}
because $\In(I_{\geq s})=\In(I)_{\geq s}= J_{\geq s}$ and $\In(I'_{\geq s})=\In(I')_{\geq s}= J'_{\geq s}$.
Now, it is enough to recall that $J'$ is saturated because $I'$ is saturated and $x_{n-1}$ is not a zero-divisor in $K[\mathbf x]/I'$.
\end{proof}

\begin{remark} 
The condition that $x_{n-1}$ is generic for $I'$ is not restrictive because it can always be obtained up to a suitable change of variables. On the other hand, the result of Theorem \ref{th:dove} does not hold without this hypothesis: for example, for the saturated ideal $I'=(x_0^2,x_1x_0+x_2^2)\subset  K[x_0,x_1,x_2]$ we obtain $\In(I')=(x_0^2, x_1x_0,x_2^2x_0,x_2^4)$, that is not saturated.
\end{remark}

Thanks to Theorems \ref{th:parametrizzazione} and \ref{th:dove}, we know {\em where} the liftings of a given saturated polynomial ideal $I'\subseteq A[\mathbf x]$ are located and {\em how} they can be constructed, obtaining Algorithm \ref{alg:computation in GS}. 

\begin{algorithm}[!ht]
\caption{\label{alg:computation in GS} Algorithm for computing parameter schemes for the liftings of a saturated homogeneous ideal $I'\subset K[\mathbf x]$ over a Noetherian $K$-algebra $A$ in $\mathrm{Hilb}_{p(t)}^n$ by means of Gr\"obner strata.}
\begin{algorithmic}[1]
\STATE $\textsc{LiftingGS}\big(I',p(t)\big)$
\REQUIRE $I'\subset K[\mathbf x]$ a saturated polynomial ideal such that $x_{n-1}$ is generic for $I'$.
\REQUIRE  $p(t)$ a Hilbert polynomial such that $\Delta p(t)= p_Y(t)$, where $p_Y(t)$ is the Hilbert polynomial of the scheme $Y$ defined by $I'$.
\ENSURE  A set $\mathfrak B$ containing parameter schemes for the liftings of $I'$ in the Gr\"obner stratum over $J$, for every monomial lifting $J\subset \Aox$ of $\In(I')$ with Hilbert polynomial $p(t)$.
\STATE $\mathcal L:=\{ J \subset \Aox \ \vert \ J \text{ monomial lifting of } \In(I') \text{ with Hilbert polynomial } p(t)\}$; 
\STATE $\mathfrak B=\emptyset$;
\FOR{$J \in \mathcal L$}
\STATE let $\mathfrak a_J \subset A[C_J]$ be the defining ideal of the Gr\"obner stratum $\mathrm{St}_{J}$;
\STATE $\mathfrak b_{J\cap K[\mathbf x]}:=(0)$;
\FOR{$x^\beta\in B_J$}
\STATE construct the polynomial $f_\beta$ as in \eqref{base di H};
\STATE $\mathfrak b_{J\cap K[\mathbf x]}:= \mathfrak b_{J\cap K[\mathbf x]}+(\mathrm{coeff}(\overline{f_\beta}))$, where $\overline{f_\beta}$ is the $J$-normal form of $f_\beta$ modulo $I'$; 
\ENDFOR
\STATE $\mathfrak B:=\mathfrak B\cup \{\mathfrak a_J+\mathfrak b_{J\cap K[\mathbf x]}\Aox\}$;
\ENDFOR
\end{algorithmic}
\end{algorithm}


\section{Functors of liftings}

From Theorem \ref{th:parametrizzazione} we obtain the following generalization of \cite[Corollary~3.3]{BCGR} 
in which, if $\phi:A\rightarrow B$ is a $K$-algebra morphism, we also denote by $\phi$ the natural extension 
of $\phi$ to $\Aox$. Recall that the image under $\phi$ of every ideal $I$ in $\Aox$ generates the extension  $I^e = IB[\mathbf x,x_n]=I \otimes_A B$ (see \citep{BGS91}). 

\begin{proposition} \label{prop:funtore}
Let $I'\subset K[\mathbf x]$ be a saturated ideal and $\phi:A\rightarrow B$ a $K$-algebra morphism. For every lifting $I \subset \Aox$ of $I'$ over $A$ the ideal $I\otimes_A B$ is a lifting of $I'$ over $B$, with the same Hilbert polynomial as $I$. 
\end{proposition}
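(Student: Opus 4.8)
The plan is to reduce everything to the behavior of $x_n$-liftings under base change, which is already understood from \citep{BCGR}, combined with the saturation bookkeeping encoded in Remark \ref{rem:generic and degrevlex}\eqref{item:estensioni e contrazioni}. First I would invoke Proposition \ref{prop:geometric lifting}: since $I$ is a lifting of $I'$ over $A$, there is a homogeneous ideal $H\subseteq K[\mathbf x]$ with $H^{\sat}=I'$ such that $I$ is an $x_n$-lifting of $H$ (concretely, $H=\frac{(I,x_n)}{(x_n)}\cap K[\mathbf x]$). By Theorem \ref{FM}, $I$ is generated by a reduced Gr\"obner basis of the form $\{f_\alpha+g_\alpha\}_\alpha$ with $\{f_\alpha\}_\alpha$ the reduced Gr\"obner basis of $HA[\mathbf x]$ and $g_\alpha\in(x_n)\Aox$, and $\In(I)$ is generated by the same terms as $\In(H)$.

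Next I would apply $\phi$ to this presentation. Since the Gr\"obner basis is monic (leading coefficient $1$) and the term order is degrevlex, $\varphi(G_I)=\{\phi(f_\alpha)+\phi(g_\alpha)\}_\alpha$ is again a monic Gr\"obner basis generating $I\otimes_A B=IB[\mathbf x,x_n]$ with the same initial ideal (this is exactly the point recalled in Section 1 right before this proposition, coming from \citep{BGS91}, \citep[Lemma 6.1]{BCR2}, \citep[Section 5]{LR2}). The coefficients of the $f_\alpha$ already lie in $K$, so $\phi(f_\alpha)=f_\alpha$; hence the reduced Gr\"obner basis of $I\otimes_A B$ is of the form $\{f_\alpha+\phi(g_\alpha)\}_\alpha$ with $\phi(g_\alpha)\in(x_n)B[\mathbf x,x_n]$. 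By Theorem \ref{FM} again (now in the converse direction), this shows $I\otimes_A B$ is an $x_n$-lifting of $H$ over $B$. Applying Proposition \ref{prop:geometric lifting} once more, $I\otimes_A B$ is a lifting of $I'$ over $B$. For the Hilbert polynomial statement, I would note that since $\Aox/I$ is a free graded $A$-module with basis the sous-escalier $\mathcal N(\In(I))$ in each degree, tensoring with $B$ over $A$ gives a free graded $B$-module with the same graded ranks; thus $h_{B[\mathbf x,x_n]/I\otimes_A B}=h_{\Aox/I}$ and the Hilbert polynomials coincide.

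I do not expect a serious obstacle here: the statement is essentially functoriality of the $x_n$-lifting construction plus flatness of free modules under base change, and both ingredients are already available. The only point requiring a little care is making sure the saturation condition (b)/(b$'$) of Definition \ref{def:geometric lifting per gli ideali} is transported correctly — i.e.\ that $\bigl((I\otimes_A B,x_n)/(x_n)\bigr)^{\sat}=I'B[\mathbf x]$ rather than merely containing or being contained in it — but this is handled uniformly by routing through the $x_n$-lifting characterization (where the relevant condition is the cleaner equality $\frac{(I,x_n)}{(x_n)}\simeq H\otimes_K A$) and then applying Remark \ref{rem:generic and degrevlex}\eqref{item:estensioni e contrazioni} to pass back to saturations. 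Alternatively, one can avoid Gr\"obner bases entirely and argue directly: genericity of $x_n$ for $I\otimes_A B$ follows because $\Aox/I$ free over $A$ with $x_n$ a non-zero divisor forces $x_n$ to remain a non-zero divisor after $\otimes_A B$ on the relevant free modules, and the section condition is preserved because $-\otimes_K B$ is exact on the $K$-vector-space level; I would mention this as the conceptual reason but present the Gr\"obner-basis argument as the formal proof since it also yields the invariance of $\In(I)$ for free.
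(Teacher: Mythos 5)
Your proposal is correct and follows essentially the same route as the paper: route through Proposition \ref{prop:geometric lifting} to find $H$ with $H^\sat=I'$, use Theorem \ref{FM} to write the reduced monic Gr\"obner basis of $I$ as $\{f_\alpha+g_\alpha\}$ with $g_\alpha\in(x_n)\Aox$, observe that $\phi$ preserves the monic reduced Gr\"obner basis and hence the $x_n$-lifting property, and conclude via Proposition \ref{prop:geometric lifting}. The Hilbert-polynomial argument (invariance of the initial ideal, i.e.\ of the free graded basis $\mathcal N(\In(I))$) also matches the paper.
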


\begin{proof} Let $G_I$ be the reduced monic Gr\"obner basis of $I$ and $J=\In(I)$. Then, $\phi(G_I)$ is a monic Gr\"obner basis of $I\otimes_A B$ with the same  initial 
terms of $G_I$ because $G_I$ is monic. Let $H\subset K[\mathbf x]$ be the ideal such that $I$ is a $x_n$-lifting of $H$ and $H^\sat=I'$. Let $G_H=\{f_\beta\}_{x^\beta\in B_J}$ be the reduced (monic) Gr\"obner basis of $H$. By Theorems \ref{th:parametrizzazione} and \ref{FM} we have that $G_I$ is of the following type
\[
G_I=\lbrace f_\beta + \sum_{x^\delta \in \mathcal N(J)_{\vert \beta\vert-1}} {c}_{\beta\delta} x_n x^\delta \rbrace_\beta, \quad c_{\beta\delta} \in A.
\]
The ideal $I\otimes_A B$ is then generated by $\phi(G_I)=\lbrace f_\beta - \sum_{x^\delta \in \mathcal N(J)_{\vert \beta\vert-1}} \phi({c}_{\beta\delta}) x_n x^\delta \rbrace_{x^\beta\in B_J}$, which is still a reduced Gr\"obner basis  because the polynomials of $G_I$ are monic. Hence, $I\otimes_A B$ is a $x_n$-lifting of $H$ by Theorem \ref{FM} 
and a lifting of $I'$ by Proposition  \ref{prop:geometric lifting}.

For the statement concerning 
the Hilbert polynomial, it is sufficient to observe that $I$ and $I\otimes_K B$ have the same initial ideal, hence the same Hilbert polynomial.
\end{proof}

Given a scheme $Y\subseteq \mathbb P^{n-1}_K$, thanks to Proposition \ref{prop:funtore} we can now easily define some functors concerning the liftings of $Y$. 

\begin{definition}\label{def:funtore lifting}
Let $Y=\Proj(K[\mathbf x]/I')$ be a closed subscheme of $\mathbb P^{n-1}_K$ with Hilbert polynomial $p_Y(t)$ and $p(t)$ be a Hilbert polynomial such that $\Delta p(t)=p_Y(t)$. 
\begin{itemize}
\item[(a)] The {\em functor of liftings of $Y$},  $\underline{\mathrm{L}}_{Y}:{\text{Noeth-}K\text{-Alg}}\rightarrow {\mathrm{Sets}}$, 
associates to every Noetherian $K$-algebra $A$ the set 
$$\underline{\mathrm{L}}_{Y}(A):=\{I \subset A[\mathbf x,x_n] : I \text{ lifting of } I' \}$$
and to every morphism of $K$-algebras $\phi: A \rightarrow B$ the map
\[\begin{array}{rcl}
\underline{\mathrm{L}}_{Y}(\phi): \underline{\mathrm{L}}_{Y}(A)&\rightarrow& \underline{\mathrm{L}}_{Y}(B)\\
 I&\mapsto& I\otimes_A B.
\end{array}\]

\item[(b)] The {\em functor of liftings of $Y$ with Hilbert polynomial $p(t)$}, $\underline{\mathrm{L}}_Y^{p(t)}: {\text{Noeth-}K\text{-Alg}}\rightarrow {\mathrm{Sets}}$, associates to every Noetherian $K$-algebra $A$ the set 
$$\underline{\mathrm{L}}_{Y}^{p(t)}(A):=\{I \subset A[\mathbf x,x_n]  : I \text{ lifting of } I' \text{ with Hilbert polynomial } p(t)\}$$
and to every morphism of $K$-algebras $\phi: A \rightarrow B$ the map
\[\begin{array}{rcl}
\underline{\mathrm{L}}_{Y}^{p(t)}(\phi): \underline{\mathrm{L}}_{Y}^{p(t)}(A)&\rightarrow& \underline{\mathrm{L}}_{Y}^{p(t)}(B)\\
 I&\mapsto& I\otimes_A B.
\end{array}\]
\end{itemize}
\end{definition}

It is immediate that $\underline{\mathrm{L}}_{Y}^{p(t)}$ is a subfunctor of $\underline{\mathrm{L}}_{Y}$, and furthermore $\underline{\mathrm{L}}_{Y}$ (resp.~$\underline{\mathrm{L}}_{Y}^{p(t)}$) is a subfunctor of $\underline{\mathrm{Hilb}}^n$ (resp.~$\underline{\mathrm{Hilb}}^n_{p(t)}$). In fact, recall that the functors $\underline{\mathrm{Hilb}}^n$ and $\underline{\mathrm{Hilb}}^n_{p(t)}$ are both representable by locally Noetherian schemes, hence it is enough to consider their restrictions to the category of Noetherian $K$-algebras.

Using the same arguments on Hilbert schemes that give \eqref{eq:co-product}, we obtain that the functor of litings of $Y$ decomposes as a co-product of the above subfunctors 
\begin{equation}\label{eq:co-product lifting}
\underline{\mathrm{L}}_{Y}=\coprod_{p(t)\text{ admissible for liftings of $Y$ in } \mathbb P_K^n} \underline{\mathrm{L}}_{Y}^{p(t)}.
\end{equation}

\begin{proposition}\label{prop:Zariski sheaf}
The functor $\underline{\mathrm{L}}_Y^{p(t)}$ is a Zariski sheaf.
\end{proposition}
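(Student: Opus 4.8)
The plan is to verify that $\underline{\mathrm{L}}_Y^{p(t)}$ satisfies the sheaf condition in the Zariski topology on affine schemes, i.e.~that for every Noetherian $K$-algebra $A$ and every finite set of elements $f_1,\dots,f_r\in A$ generating the unit ideal, the usual equalizer sequence
\[
\underline{\mathrm{L}}_Y^{p(t)}(A)\longrightarrow \prod_i \underline{\mathrm{L}}_Y^{p(t)}(A_{f_i})\rightrightarrows \prod_{i,j}\underline{\mathrm{L}}_Y^{p(t)}(A_{f_if_j})
\]
is exact. First I would recall that $\underline{\mathrm{L}}_Y^{p(t)}$ is a subfunctor of $\underline{\mathrm{Hilb}}^n_{p(t)}$, which is a Zariski sheaf since it is representable, and that by Proposition~\ref{prop:funtore} the transition maps are indeed $I\mapsto I\otimes_A B$, compatible with localization. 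Hence it suffices to show that the subfunctor condition is \emph{local}: given $I\subset A[\mathbf x,x_n]$ a homogeneous ideal with Hilbert polynomial $p(t)$ (as a point of $\underline{\mathrm{Hilb}}^n_{p(t)}(A)$, i.e.~$A[\mathbf x,x_n]/I$ flat over $A$ with the right fibers), if $I\otimes_A A_{f_i}$ is a lifting of $I'$ over $A_{f_i}$ for every $i$, then $I$ itself is a lifting of $I'$ over $A$.

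The key steps are then to check that each of the defining conditions of a lifting (Definition~\ref{def:geometric lifting per gli ideali}) descends from a covering. For condition (a), that $x_n$ is generic for $I$, i.e.~$x_n$ is a non-zero-divisor on $A[\mathbf x,x_n]/I^\sat$: here I would instead use the equivalent formulation via Proposition~\ref{prop:geometric lifting}, namely that $I$ is a lifting of $I'$ iff $I$ is an $x_n$-lifting of some $H\subset K[\mathbf x]$ with $H^\sat=I'$, and that by Theorem~\ref{FM} this is detectable from the reduced monic Gr\"obner basis of $I$. Concretely, flatness of $A[\mathbf x,x_n]/I$ over $A$ gives a monic reduced Gr\"obner basis $G_I$ of $I$ whose formation commutes with the localizations $A\to A_{f_i}$ (each coefficient lies in $A$, and monicity is preserved), so the condition ``$G_I$ has the shape $\{f_\beta+\sum_{x^\delta} c_{\beta\delta}x_nx^\delta\}$ with $\{f_\beta\}$ the reduced Gr\"obner basis of some $H$ with $H^\sat=I'$'' — which is a condition purely on the elements of $G_I$ in $A[\mathbf x,x_n]$, expressible by the vanishing of certain elements of $A$ (the $J$-normal forms of the $f_\beta$ modulo $I'$, cf.~Theorem~\ref{th:parametrizzazione} and Algorithm~\ref{alg:computation in GS}) — holds over $A$ as soon as it holds over each $A_{f_i}$, since an element of $A$ that maps to zero in every $A_{f_i}$ is zero. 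Condition (b$'$), $(\{g(\dots,0):g\in I\})^\sat=I'A[\mathbf x]$, likewise follows because once the Gr\"obner basis has the stated shape, the image modulo $x_n$ is exactly $HA[\mathbf x]$ and saturating gives $I'A[\mathbf x]$, all of this being an identity between explicit ideals that can be checked after localization and glued.

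The main obstacle I anticipate is a small bookkeeping subtlety: a priori $\underline{\mathrm{L}}_Y^{p(t)}(A)$ is a set of ideals $I$, not just of closed subschemes, so I must be careful that the sheaf axiom is being verified at the level of ideals — but since $A[\mathbf x,x_n]$ is a flat (indeed free) $K$-module and a homogeneous ideal $I$ with $A[\mathbf x,x_n]/I$ flat over $A$ is determined by its localizations $I\otimes_A A_{f_i}$ (compatibly glued on overlaps), this is exactly the sheaf property already enjoyed by $\underline{\mathrm{Hilb}}^n_{p(t)}$, and the only genuine content is the descent of the ``lifting'' conditions, handled as above. I would close by remarking that, alternatively and more slickly, one may simply invoke Theorem~\ref{th:main features}(i) together with the fact, implicit in Theorem~\ref{th:parametrizzazione}, that locally $\underline{\mathrm{L}}_Y^{p(t)}$ agrees with a closed subfunctor of the Gr\"obner functor $\underline{\St}_J$ cut out by equations with coefficients in $K$, and closed subfunctors of Zariski sheaves are Zariski sheaves; combined with the co-product decomposition \eqref{eq:co-product lifting} this yields the claim.
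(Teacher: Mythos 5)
Your main argument has a genuine gap in the very first substantial step. You assert that ``flatness of $A[\mathbf x,x_n]/I$ over $A$ gives a monic reduced Gr\"obner basis $G_I$ of $I$ whose formation commutes with the localizations $A\to A_{f_i}$.'' This is not true in general, and it is precisely the heart of the matter: a flat family over $\mathrm{Spec}(A)$ need not have a monic reduced Gr\"obner basis over $A$ — the initial ideal can jump across $\mathrm{Spec}(A)$, and that is exactly why the Gr\"obner strata $\mathrm{St}_J$ are only \emph{locally closed} inside the Hilbert scheme. In this setting, to assert that the glued ideal $I$ has a monic reduced Gr\"obner basis over all of $A$, one needs to know a priori that the initial ideals $\In(I\otimes_A A_{f_i})$ are all the extension of the \emph{same} monomial ideal of $K[\mathbf x,x_n]$. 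That in turn requires showing that the ideals $H_i\subset K[\mathbf x]$ underlying the $x_n$-liftings $I_i$ all coincide; you never establish this, and without it the ``vanishing of coefficients is local'' argument that follows has nothing to stand on.

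The paper's proof closes exactly that gap, and does so by a genuinely different route. It does not first glue in $\underline{\mathrm{Hilb}}^n_{p(t)}$ at all. Instead, by Proposition~\ref{prop:geometric lifting} each $I_i$ is a $x_n$-lifting of some $H_i$ with $H_i^{\sat}=I'$; then Theorem~\ref{FM} together with the compatibility on overlaps $I_i\otimes_{A_{a_i}}A_{a_ia_j}=I_j\otimes_{A_{a_j}}A_{a_ia_j}$ forces $H_i=H_j=:H$ (the reduced Gr\"obner bases of $H_i$ and $H_j$ are both recovered from the same overlap data, since they live over $K$). Once all $I_i$ belong to $\underline{\mathrm{L_H}}(A_{a_i})$ for a single $H$, the existence and uniqueness of the glued $I\in\underline{\mathrm{L}}_Y^{p(t)}(A)$ follows from the fact — recalled at the end of Section~\ref{sec:Groebner strata and $x_n$-liftings} — that $\underline{\mathrm{L_H}}$ is a Zariski sheaf, plus another application of Proposition~\ref{prop:geometric lifting}. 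Your ``slicker'' alternative at the end has a related but smaller issue: it is true that a closed subfunctor of a Zariski sheaf is a Zariski sheaf, but $\underline{\mathrm{L}}_Y^{p(t)}$ is not a closed subfunctor of a single $\underline{\St}_J$; it is a union over many monomial liftings $J$ of $\In(I')$, and you would still need the argument that on a connected cover the $J$'s (equivalently, the $H$'s) agree before you can invoke the sheaf property of any one $\underline{\St}_J$. That missing reduction to a single $H$ is precisely what the paper's proof supplies via Theorem~\ref{FM}.
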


\begin{proof}
Let $A$ be a Noetherian $K$-algebra and $\{U_i=\Spec(A_{a_i})\}_{i=1,\dots,r}$ be an open covering of $\Spec(A)$. This is equivalent to the fact $(a_1,\dots,a_r)=A$. Consider a set of ideals $I_i\in \underline{\mathrm{L}}_Y^{p(t)}(A_{a_i})$ such that for any pair of indexes $i\not= j$ we have
\begin{equation}\label{eq:Zariski sheaf}
I_{ij}:=I_i\otimes_{A_{a_i}} A_{a_ia_j} = I_j\otimes_{A_{a_j}} A_{a_ia_j} \in \underline{\mathrm{L}}_Y^{p(t)}(A_{a_ia_j}).
\end{equation}
We need to show that there is a unique ideal $I\in \underline{\mathrm{L}}_Y^{p(t)}(A)$ such that $I_i=I\otimes_A A_{a_i}$ for every $i$. 
 
By Proposition \ref{prop:geometric lifting}, there are $H_i$ and $H_j$ ideals in $K[\mathbf x]$ such that $H_i^\sat=H_j^\sat=I'$ and $I_i$ is a $x_n$-lifting of $H_i$, while $I_j$ is a $x_n$-lifting $H_j$. By Theorem \ref{FM} and assumption \eqref{eq:Zariski sheaf}, $H_i=H_j\subset K[\mathbf x]$, hence $I_i$ belongs to $\underline{\mathrm L_H}(A_i)$ and $I_j$ belongs to $\underline{\mathrm L_H}(A_j)$. Since $\underline{\mathrm L_H}$ is a Zariski sheaf, there is a unique $I\subset A[\mathbf x,x_n]$ such that $I$ is a $x_n$-lifting of $H$, $I\otimes_{A}A_{a_i}=I_i$ and $I\otimes_{A}A_{a_j}=I_j$. By Proposition \ref{prop:geometric lifting} we conclude that $I$ belongs to $\underline{\mathrm{L}}_Y^{p(t)}(A)$ and is the unique ideal in $A[\mathbf x,x_n]$ such that $I_i=I\otimes_A A_{a_i}$ for every $i$. 
\end{proof}

Recall that a closed subscheme in $\mathbb P^n_K$ is {\em equidimensional} if all its components have the same dimension, in particular it has no embedded components. Thus, there exists an equidimensional lifting $W$ of a subscheme $Y$ only if $Y$ is equidimensional, i.e.~the ideal $I'\subset K[\mathbf x]$ defines an equidimensional scheme in $\mathbb P^{n-1}_K$.
We say that a saturated ideal $I\subset A[\mathbf x,x_n]$ is {\em equidimensional} if defines families of equidimensional subschemes. The next well-known result 
highlights that base extension preserves the fibers on every $K$-point and, hence, their possible equidimensionality. 

\begin{lemma}\label{lemma:funtore equidimensionale}
Let $W$ be a scheme over a $K$-algebra $A$. If $\phi: A \rightarrow B$ is a morphism of $K$-algebras, then the fibers of $W\to \Spec(A)$ are isomorphic to the fibers of $W\times_{\Spec(A)} \Spec(B)\to \Spec(B)$ for every $K$-point.
\end{lemma}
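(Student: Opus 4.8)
\textbf{Proof plan for Lemma \ref{lemma:funtore equidimensionale}.}
The statement is a standard fact about base change along a $K$-algebra morphism, so the plan is to make explicit that ``fibers on $K$-points'' means fibers over the points of $\Spec(A)$ (resp.\ $\Spec(B)$) whose residue field equals $K$, and then invoke the compatibility of fibered products with composition. First I would fix a $K$-point of $\Spec(B)$, i.e.\ a $K$-algebra morphism $\psi\colon B\to K$, and form the composition $\psi\circ\phi\colon A\to K$, which is a $K$-point of $\Spec(A)$ corresponding to a point $x\in\Spec(A)$ with residue field $K$; call $y\in\Spec(B)$ the point cut out by $\psi$, so that $\phi$ maps $y$ to $x$. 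The fiber of $W\to\Spec(A)$ over $x$ is by definition $W\times_{\Spec(A)}\Spec(K)$, where $\Spec(K)\to\Spec(A)$ is induced by $\psi\circ\phi$.

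The key step is the transitivity of base change: setting $W_B:=W\times_{\Spec(A)}\Spec(B)$, the canonical isomorphism
\begin{equation*}
W_B\times_{\Spec(B)}\Spec(K)\;\cong\;W\times_{\Spec(A)}\bigl(\Spec(B)\times_{\Spec(B)}\Spec(K)\bigr)\;\cong\;W\times_{\Spec(A)}\Spec(K)
\end{equation*}
holds because $\Spec(B)\times_{\Spec(B)}\Spec(K)\cong\Spec(K)$ over $\Spec(A)$ via the map induced by $\psi\circ\phi$. Hence the fiber of $W_B\to\Spec(B)$ over $y$ is isomorphic to the fiber of $W\to\Spec(A)$ over $x$. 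Conversely, every $K$-point $x$ of $\Spec(A)$ through which the structure data factor appropriately arises this way, so the correspondence on fibers over $K$-points is the one claimed.

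There is essentially no obstacle here beyond bookkeeping: the only thing to be careful about is the meaning of ``for every $K$-point'', namely that one compares the fiber of $W_B$ over a $K$-point $\psi$ of $\Spec(B)$ with the fiber of $W$ over the induced $K$-point $\psi\circ\phi$ of $\Spec(A)$, rather than asserting a bijection between the sets of $K$-points of $\Spec(A)$ and $\Spec(B)$. With this reading the lemma is just the associativity of fibered products, and in particular, since equidimensionality of a scheme over $K$ is preserved under isomorphism, it follows that $W_B$ has equidimensional fibers on $K$-points as soon as $W$ does.
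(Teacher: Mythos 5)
Your proof is correct and takes essentially the same approach as the paper: fix a $K$-point $\psi\colon B\to K$, compose with $\phi$ to get a $K$-point of $\Spec(A)$, and invoke transitivity of base change to identify $(W\times_{\Spec(A)}\Spec(B))\times_{\Spec(B)}\Spec(K)$ with $W\times_{\Spec(A)}\Spec(K)$. Your version is slightly more explicit about which $K$-points are being compared, which is a helpful clarification but not a different argument.
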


\begin{proof} For the sake of completeness we give a proof of this statement. 
Let $\phi: A\to B$ be a morphism of $K$-algebras, $\phi^{*}: \Spec(B) \to \Spec(A)$ the corresponding morphism and $\Spec(K)\to \Spec(B)$  the morphism associated to a $K$-point of $\Spec(B)$ (e.g.~\cite[Chapter II, Exercise 2.7]{H}). Moreover, let $\Spec(K)\to \Spec(A)$ be the morphism associated to the $K$-point of $\Spec(A)$ obtained by composition with $\phi^{*}$ and $W\times_{\Spec(A)} \Spec(K)$ the fiber on this $K$-point. Then, we obtain $(W\times_{\Spec(A)} \Spec(B))\times_{\Spec(B)} \Spec(K)\simeq W\times_{\Spec(A)} \Spec(K)$ due to the transitivity of base extension.
\end{proof}

\begin{definition}\label{def:funtore lifting eq}
Let $Y=\Proj(K[\mathbf x]/I')$ be an equidimensional closed subscheme of $\mathbb P^{n-1}_K$ with Hilbert polynomial $p_Y(t)$ and $p(t)$ a Hilbert polynomial such that $\Delta p(t)=p_Y(t)$. Thanks to Lemma \ref{lemma:funtore equidimensionale} we define
\begin{itemize}
\item[(a)] The {\em functor of equidimensional liftings of $Y$}, denoted by $\underline{\mathrm{L}}_{Y}^e: {\text{Noeth-}K\text{-Alg}}\rightarrow {\mathrm{Sets}}$,
associates to every Noetherian $K$-algebra $A$ the set 
$$\underline{\mathrm{L}}_{Y}^e(A):=\{I \subset A[\mathbf x,x_n] : I \text{ equidimensional lifting of } I' \}$$
and to every morphism of $K$-algebras $\phi: A \rightarrow B$ the map
\[\begin{array}{rcl}
\underline{\mathrm{L}}_{Y}^e(\phi): \underline{\mathrm{L}}_{Y}^e(A)&\rightarrow& \underline{\mathrm{L}}_{Y}^e(B)\\
 I&\mapsto& I\otimes_A B.
\end{array}\]

\item[(b)] The {\em functor of equidimensional liftings of $Y$ with Hilbert polynomial $p(t)$}, denoted by $\underline{\mathrm{L}}_Y^{p(t),e}:~{\text{Noeth-}K\text{-Alg}}\rightarrow~{\mathrm{Sets}}$, associates to every Noetherian $K$-algebra $A$ the set 
$$\underline{\mathrm{L}}_{Y}^{p(t),e}(A):=\{I \subset A[\mathbf x,x_n]  : I \text{ equidimensional lifting of } I' \text{ with Hilbert polynomial } p(t)\}$$
and to every morphism of $K$-algebras $\phi: A \rightarrow B$ the map
\[\begin{array}{rcl}
\underline{\mathrm{L}}_{Y}^{p(t),e}(\phi): \underline{\mathrm{L}}_{Y}^{p(t),e}(A)&\rightarrow& \underline{\mathrm{L}}_{Y}^{p(t),e}(B)\\
 I&\mapsto& I\otimes_A B.
\end{array}\]
\end{itemize}
\end{definition}

By definition, the functor $\underline{\mathrm{L}}_Y^{e}$ (resp.~$\underline{\mathrm{L}}_Y^{p(t),e}$) is a subfunctor of $\underline{\mathrm{L}}_Y$ (resp.~of $\underline{\mathrm{L}}_Y^{p(t)}$) and we have
\begin{equation}\label{eq:co-product lifting equid}
\underline{\mathrm{L}}_{Y}^e=\coprod_{p(t)\text{ admissible for liftings of $Y$ in } \mathbb P_K^n} \underline{\mathrm{L}}_{Y}^{p(t),e}
\end{equation}
similarly to formulas \eqref{eq:co-product} and \eqref{eq:co-product lifting} for $\underline{\mathrm{Hilb}^n}$ and $\underline{\mathrm{L}}_Y$.


\section{The functor $\underline{\mathrm{L}}_Y^{p(t),e}$ is representable}

We need some preliminary results. 

\begin{proposition}\label{prop:Gro2} \cite[Proposition (2.3.4)(iii)]{Gro2}
Let $S$ be a locally Noetherian $K$-scheme, $W$ be an element of $\underline{\mathrm{Hilb}}_{p(t)}^n(S)$ and $f: W \rightarrow S$ the corresponding flat projection. For every irreducible closed subset $S'$ of $S$, every irreducible component $W'$ of $f^{-1}(S')$ is dominant on $S'$, i.e.~$f'=f_{\vert W'} : W' \rightarrow S'$ is dominant. 
\end{proposition}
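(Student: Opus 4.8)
The statement is the classical fact (due to Grothendieck, EGA) that a flat morphism is open, or more precisely that flatness forces every irreducible component of a fibre (over an irreducible base subset) to dominate that base. The plan is to reduce to the local algebra statement: if $(R,\mathfrak m)\to(R',\mathfrak m')$ is a local homomorphism of Noetherian local rings and $R'$ is flat over $R$, then the associated map on $\Spec$ sends the generic point of every irreducible component of $\Spec R'$ to the generic point of $\Spec R$; equivalently, the going-down property holds for flat ring maps, so $\mathfrak p\cap R=(0)$ for every minimal prime $\mathfrak p$ of $R'$ whenever $R$ is a domain. Since the question is local on $S$ and on $W$, and since an irreducible closed subset $S'$ may be replaced by its reduced induced structure (an integral scheme with generic point $\eta$), it suffices to prove: for $f\colon W\to S$ flat with $S$ integral with generic point $\eta$, every irreducible component of $W$ meets the generic fibre $f^{-1}(\eta)$, hence dominates $S$.

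First I would set up the reduction. Because $W\in\underline{\mathrm{Hilb}}^n_{p(t)}(S)$, the projection $f\colon W\to S$ is projective and flat with $S$ locally Noetherian, so $W$ is locally Noetherian and $f$ is of finite type. Given an irreducible closed $S'\subseteq S$, pass to $S'_{\mathrm{red}}$, an integral scheme, and to $f^{-1}(S')_{\mathrm{red}}=W\times_S S'_{\mathrm{red}}$; flatness is preserved under the base change $S'_{\mathrm{red}}\hookrightarrow S$ only after this reduction one no longer has flatness, so instead I keep $f$ flat over $S$ and argue that an irreducible component $W'$ of $f^{-1}(S')$ has generic point $\xi$ mapping into $S'$; I must show $f(\xi)$ is the generic point $\eta'$ of $S'$. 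Working in the local rings, let $A=\mathcal O_{S,f(\xi)}$ and $B=\mathcal O_{W,\xi}$; flatness of $f$ gives that $B$ is flat over $A$. The prime of $A$ cutting out $S'$ is some $\mathfrak q$, and $\xi$ being the generic point of a component of $f^{-1}(S')$ means the maximal ideal of $B$ is a minimal prime over $\mathfrak q B$.

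The key step is then purely commutative-algebraic: by the going-down theorem for flat extensions (\cite[Theorem 9.5]{Matsumura} or EGA IV, 2.3.4), $\dim B=\dim B/\mathfrak q B + \dim A_{\mathfrak q}$ when $\mathfrak m_B$ is minimal over $\mathfrak q B$, one gets $\mathrm{ht}(\mathfrak m_B)=\mathrm{ht}(\mathfrak m_B/\mathfrak q B)+\mathrm{ht}(\mathfrak q)$; since $\mathfrak m_B/\mathfrak q B$ has height $0$ we deduce that the prime $\mathfrak q'=f^\sharp{}^{-1}$ of the image of $\xi$, restricted appropriately, forces $f(\xi)\in S'$ to be exactly $\eta'$. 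Concretely: going-down says that for any prime $\mathfrak p\subseteq\mathfrak q$ of $A$ there is a prime of $B$ contained in $\mathfrak m_B$ lying over $\mathfrak p$; if $f(\xi)\ne\eta'$ then the prime $\mathfrak p$ of $A$ defining $\eta'$ is strictly smaller than $\mathfrak q$, and going-down produces a prime of $B$ strictly contained in $\mathfrak m_B$ still mapping into $S'$, contradicting minimality of $\mathfrak m_B$ over $\mathfrak q B$. Hence $f(\xi)=\eta'$, i.e.\ $W'$ dominates $S'$.

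The main obstacle is bookkeeping rather than substance: one must be careful that $W'$, being an irreducible component of the \emph{fibre} $f^{-1}(S')$ (with its scheme structure as $W\times_S S'$), has generic point which is genuinely minimal over the extended ideal in the relevant local ring, so that the going-down argument applies to conclude \emph{equality} $f(\xi)=\eta'$ and not merely $f(\xi)\in\overline{\{\eta'\}}$. Since this is a standard reference result I would in fact simply cite \cite[Proposition (2.3.4)(iii)]{Gro2} as the paper does, and include the sketch above only as a remark for completeness; no new ideas beyond flat going-down are needed.
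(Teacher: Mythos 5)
The paper gives no proof of this proposition but simply cites \cite[Proposition (2.3.4)(iii)]{Gro2}, and you correctly anticipated this and would do the same; your accompanying sketch of the EGA argument via going-down for flat local ring maps is sound. (There is a small notational slip in your final paragraph — you introduce $\mathfrak q$ as the prime of $A$ cutting out $S'$, i.e.\ the prime of $\eta'$, but then later treat $\mathfrak q$ as the maximal ideal and use a new symbol $\mathfrak p$ for the prime of $\eta'$ — but the underlying content is unaffected: $\mathfrak m_B$ is minimal over the extension of the prime of $\eta'$, and going-down forces it to contract exactly to that prime.)
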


If $p_Y(t)$ is the Hilbert polynomial of $Y$, we can consider the Hilbert-flag scheme $\mathcal Fl_{p_Y,p}$ (see \citep{Kleppe}) and the projections $\pi_1: \mathcal Fl_{p_Y,p} \longmapsto \mathrm{Hilb}_{p_Y(t)}^{n-1}$ and  $\pi_2: \mathcal Fl_{p_Y,p} \longmapsto \mathrm{Hilb}_{p(t)}^n$. Thus, $\pi_1^{-1}(Y)$ is the closed scheme consisting of the couples $(Y,W)$ where $W$ varies among all the closed subschemes of $\mathbb P^n_K$ containing $Y$ and with Hilbert polynomial $p(t)$. We set $\mathrm{Hilb}_{p(t),Y}^n:=\pi_2(\pi_1^{-1}(Y))$. 

\begin{proposition}\label{prop:flag}
$\mathrm{Hilb}_{p(t),Y}^n$ is a closed subscheme of $\mathrm{Hilb}_{p(t)}^n$ which represents a closed subfunctor $\underline{\mathrm{Hilb}}_{p(t),Y}^n$ of $\underline{\mathrm{Hilb}}_{p(t)}^n$. 
\end{proposition}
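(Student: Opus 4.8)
The plan is to realize $\mathrm{Hilb}_{p(t),Y}^n$ as the image of a projective morphism and then identify the functor it represents. First I would recall that the Hilbert-flag scheme $\mathcal Fl_{p_Y,p}$ is projective over $K$, and that the projection $\pi_1\colon \mathcal Fl_{p_Y,p}\to \mathrm{Hilb}_{p_Y(t)}^{n-1}$ is a projective morphism; hence $\pi_1^{-1}(Y)$, being the scheme-theoretic fiber over the $K$-point $[Y]$, is a closed subscheme of $\mathcal Fl_{p_Y,p}$ and is itself projective over $K$. Since $\pi_2\colon \mathcal Fl_{p_Y,p}\to \mathrm{Hilb}_{p(t)}^n$ is also projective, its restriction to the closed subscheme $\pi_1^{-1}(Y)$ is projective, so the set-theoretic image $\mathrm{Hilb}_{p(t),Y}^n=\pi_2(\pi_1^{-1}(Y))$ is closed in $\mathrm{Hilb}_{p(t)}^n$ and carries a canonical closed subscheme structure (for definiteness, the reduced induced structure, or equivalently the scheme-theoretic image, since we only need it as a closed subscheme).

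Next I would describe the subfunctor $\underline{\mathrm{Hilb}}_{p(t),Y}^n$ explicitly: for a Noetherian $K$-algebra $A$, it consists of those $W\in\underline{\mathrm{Hilb}}_{p(t)}^n(A)$ such that $Y\times_{\Spec K}\Spec A\subseteq W$ as closed subschemes of $\mathbb P^n_A$, where $Y$ is viewed inside $\mathbb P^n_K$ via the hyperplane $\plane=(x_n)$. Equivalently, in ideal-theoretic terms, $I(W)\subseteq I(Y\times_K A)$ in $A[\mathbf x,x_n]$. This is clearly a subfunctor (closed under base change and functorial), and it is a \emph{closed} subfunctor: given $W\in\underline{\mathrm{Hilb}}_{p(t)}^n(A)$ corresponding to a morphism $g\colon\Spec A\to\mathrm{Hilb}_{p(t)}^n$, the condition ``$W$ contains $Y\times_K A$'' cuts out a closed subscheme of $\Spec A$. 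One way to see this: the couple $(Y\times_K A, W)$ determines a morphism $\Spec A\to \mathcal Fl_{p_Y,p}$ precisely when $W\supseteq Y\times_K A$, and the locus where the composite with $\pi_1$ lands in the closed point $[Y]$ is closed; this gives the representability of $\underline{\mathrm{Hilb}}_{p(t),Y}^n$ by $\pi_1^{-1}(Y)$ mapped via $\pi_2$.

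Then I would check that the closed subscheme $\mathrm{Hilb}_{p(t),Y}^n$ indeed \emph{represents} $\underline{\mathrm{Hilb}}_{p(t),Y}^n$, not merely has the right $K$-points. The key point is that $\pi_2$ restricted to $\pi_1^{-1}(Y)$ is a \emph{closed immersion} onto its image: indeed, over the locus of $\mathrm{Hilb}_{p(t)}^n$ of schemes containing (a translate of) $Y$, the first member of a flag with fixed second member $W$ and fixed Hilbert polynomial $p_Y(t)$ is forced to be $Y\times_K A$ itself, so the fiber of $\pi_2|_{\pi_1^{-1}(Y)}$ over any $A$-point of the image is a single point. Hence $\pi_2|_{\pi_1^{-1}(Y)}\colon \pi_1^{-1}(Y)\xrightarrow{\ \sim\ }\mathrm{Hilb}_{p(t),Y}^n$ is an isomorphism, and therefore $\mathrm{Hilb}_{p(t),Y}^n$ represents the same functor as $\pi_1^{-1}(Y)$, which by the universal property of the flag scheme and of the fiber $\pi_1^{-1}([Y])$ is exactly $\underline{\mathrm{Hilb}}_{p(t),Y}^n$.

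The main obstacle I anticipate is the last step: making rigorous that $\pi_2|_{\pi_1^{-1}(Y)}$ is a monomorphism (equivalently, that the flag $(Y',W)$ with $W$ given and $p(Y')=p_Y(t)$ forces $Y'=Y\times_K A$). This is where one uses that $Y\times_K A$ is the \emph{unique} closed subscheme of $W\cap(\plane\times_K A)$ with Hilbert polynomial $p_Y(t)$ flat over $A$ — which in turn relies on flatness and the fact that a flat family of subschemes with the correct Hilbert polynomial inside a fixed subscheme is determined by its fibers. Alternatively, and perhaps more cleanly, one sidesteps this by simply defining the closed subscheme structure on $\mathrm{Hilb}_{p(t),Y}^n$ to be the scheme-theoretic image (equivalently, by transport of structure from $\pi_1^{-1}(Y)$ once the monomorphism property is known) and then verifying the functorial description directly. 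A routine compatibility-with-base-change check completes the argument; none of it involves serious computation, only careful bookkeeping with fibered products and Hilbert polynomials.
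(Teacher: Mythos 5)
Your proposal is correct and follows essentially the same route as the paper: both identify $\mathrm{Hilb}_{p(t),Y}^n = \pi_2(\pi_1^{-1}(Y))$ with $\pi_1^{-1}(Y)$ (using that the first member of a flag in $\pi_1^{-1}(Y)$ is forced to be $Y\times_K(-)$, so the restricted $\pi_2$ is a proper monomorphism, hence a closed immersion), and both then read off the functorial description as the closed subfunctor of families containing $Y\times_K(-)$. The paper states this very tersely, while you spell out the projectivity of the flag scheme, the fiber argument, and the monomorphism property; the step you flag as the potential obstacle is exactly the point the paper takes as immediate.
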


\begin{proof}
From the definition it follows straightforwardly that $\mathrm{Hilb}_{p(t),Y}^n:=\pi_2(\pi_1^{-1}(Y))\simeq \pi_1^{-1}(Y)$ is the closed subscheme of $\mathrm{Hilb}_{p(t)}^n$
consisting of the points in $\mathrm{Hilb}_{p(t)}^n$ corresponding to schemes containing $Y$. Thus, it represents the closed subfunctor of the Hilbert functor that associates to a scheme $S$ the set of subschemes $W$ of $\mathbb P^n_K \times S$ containing $Y\times_{\mathrm{Spec}(K)}~S$.
\end{proof}

\begin{theorem}\label{th:schema che rappresenta}
Let $Y$ be an equidimensional closed subscheme of $\mathbb P^{n-1}_K$ with Hilbert polynomial $p_Y(t)$ and $p(t)$ a Hilbert polynomial such that $\Delta p(t)=p_Y(t)$. Then, $\underline{\mathrm{L}}_Y^{p(t),e}$ is representable by a locally closed subscheme ${\mathrm{L}}_Y^{p(t),e}$ of ${\mathrm{Hilb}}_{p(t)}^n$.
\end{theorem}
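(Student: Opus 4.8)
The plan is to realize $\underline{\mathrm{L}}_Y^{p(t),e}$ as a locally closed subscheme of the closed subscheme $\mathrm{Hilb}_{p(t),Y}^n$ of $\mathrm{Hilb}_{p(t)}^n$ constructed in Proposition \ref{prop:flag}, by identifying inside it first the locus where the ``lifting'' condition holds (i.e.~where $x_n$ is generic and the hyperplane section is exactly $Y$, not merely a scheme containing $Y$) and then the equidimensional locus. First I would observe that, since every lifting of $I'$ has $x_n$ as a non-zero divisor and satisfies $((I,x_n)/(x_n))^\sat=I'A[\mathbf x,x_n]$, the schemes $W$ that are liftings of $Y$ sit among the schemes containing $Y$ with Hilbert polynomial $p(t)$; conversely, among those, being a lifting is cut out by the condition $\dim W=\dim Y+1$ together with $\Delta p_W(t)=p_Y(t)$, the latter being automatic once $p(t)$ is fixed with $\Delta p(t)=p_Y(t)$. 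So the real content is: inside $\mathrm{Hilb}_{p(t),Y}^n$, the locus of $W$ for which $Y$ is a \emph{Cartier divisor} (equivalently $x_n$ is a non-zero divisor modulo the saturated ideal $I$ of $W$) is locally closed, and the equidimensionality of the fibers is an open condition on that locus.

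The key steps, in order: (1) Use Proposition \ref{prop:flag} to reduce to working inside $\mathrm{Hilb}_{p(t),Y}^n$, a closed subscheme of $\mathrm{Hilb}_{p(t)}^n$ representing the functor of subschemes $W$ with $Y\times S\subseteq W$. (2) On $\mathrm{Hilb}_{p(t),Y}^n$ carry the universal family $\mathcal W$; the condition that the fiber over a $K$-point be a lifting of $Y$ is, by Proposition \ref{prop:h generico}, exactly $\Delta p_{\mathcal W_s}(t)=p_Y(t)$ and $x_n$ generic, and by the co-product decomposition \eqref{eq:co-product} this first equality is automatic on the component $\mathrm{Hilb}_{p(t)}^n$; so the only extra condition is that $x_n$ be a non-zero divisor in the fiber, equivalently — via the short exact sequence \eqref{short exact sequence zero-divisor} — that multiplication by $x_n$ on the (flat, hence locally free in high degree) pushforward sheaves $\pi_*\mathcal O_{\mathcal W}(t)$ be injective in each sufficiently large degree $t\geq \bar t$ (a bound depending only on $p(t)$ by uniform regularity on $\mathrm{Hilb}_{p(t)}^n$). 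Injectivity of a map of locally free sheaves of the \emph{same} rank (here ranks agree degree by degree because $\Delta p(t)=p_Y(t)$ and $A[\mathbf x,x_n]/(I,x_n)$ has Hilbert polynomial $p_Y(t)$) is the non-vanishing of a determinant, hence an \emph{open} condition; doing this for each of the finitely many relevant degrees gives an open subscheme $U\subseteq\mathrm{Hilb}_{p(t),Y}^n$ representing $\underline{\mathrm{L}}_Y^{p(t)}$ restricted to this component of the flag scheme. (3) Finally, on $U$ with its universal lifting family $\mathcal W\to U$, equidimensionality of the fibers is an open condition: this is the classical upper semicontinuity of fiber dimension for the flat projection $f:\mathcal W\to U$, invoking Proposition \ref{prop:Gro2} so that every component of a fiber dominates the base and hence the locus where some fiber acquires a component of dimension $<\dim Y+1$, or an embedded/lower-dimensional component, is closed; intersecting $U$ with the complement of this closed set yields the desired locally closed subscheme $\mathrm{L}_Y^{p(t),e}$. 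One then checks that it represents $\underline{\mathrm{L}}_Y^{p(t),e}$ using Lemma \ref{lemma:funtore equidimensionale} (base change preserves fibers, hence equidimensionality) together with Proposition \ref{prop:funtore} (base change preserves liftings and Hilbert polynomial), so that $S$-points of $\mathrm{L}_Y^{p(t),e}$ are precisely equidimensional liftings of $Y$ over $S$ with Hilbert polynomial $p(t)$.

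The main obstacle I expect is step (2): making precise and rigorous the translation ``$x_n$ non-zero divisor in every fiber $\iff$ an open determinantal condition on the base.'' One must (a) produce a uniform degree bound $\bar t$ valid on all of $\mathrm{Hilb}_{p(t)}^n$ so that saturation and $x_n$-regularity can be tested in finitely many degrees (using that the Hilbert scheme is Noetherian and that regularity is bounded in terms of $p(t)$, e.g.~via Gotzmann), (b) verify that the two locally free sheaves between which $\cdot x_n$ acts have equal rank in each degree $t\geq\bar t$ — this uses $\Delta p(t)=p_Y(t)$ and the fact that on $\mathrm{Hilb}_{p(t),Y}^n$ the hyperplane section has Hilbert polynomial $p_Y(t)$ — so that injectivity is genuinely a determinantal (codimension-one, open) condition rather than merely the rank condition of a non-square map, and (c) reconcile the sheaf-theoretic argument with the functorial statement, i.e.~check the open subscheme so produced really corepresents, on Noetherian $K$-algebras, the subfunctor of liftings. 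An alternative, more in the spirit of the rest of the paper, would be to bypass (2) by gluing the explicit affine charts coming from Theorems \ref{th:parametrizzazione} and \ref{th:dove} (the linear sections of Gr\"obner strata); but since the authors explicitly say they ``adapt classical arguments of algebraic geometry,'' the semicontinuity route above is presumably the intended one, and step (3) is then standard once step (2) is in place.
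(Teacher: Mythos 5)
Your proposal takes a genuinely different route from the paper. The paper stays inside $\mathrm{Hilb}_{p(t),Y}^n\cap U_e$ (closed $\cap$ open, where $U_e$ is the open locus of equidimensional families) and argues directly with the geometry of fibers of the universal family: Proposition \ref{prop:Gro2} gives dominance of fiber components, upper semicontinuity of fiber dimension yields an open subset where $\dim(W_s\cap\mathcal H)\leq\dim Y$, and then a Bezout-type inequality (\cite[Corollary 18.5]{Harris92}) --- which is exactly where equidimensionality is essential --- bounds $\deg(W_s\cap\mathcal H)$ above by $\deg Y$; combined with $Y\times S\subseteq W\cap(\mathcal H\times S)$ this forces the hyperplane section to equal $Y$. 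Equidimensionality is therefore built into the argument from the beginning, not added as a final open condition as in your step (3). You instead propose to cut out the full lifting locus inside $\mathrm{Hilb}_{p(t),Y}^n$ first and only then intersect with the equidimensional locus; if made rigorous this would actually prove the stronger statement that $\underline{\mathrm{L}}_Y^{p(t)}$ itself is representable by a locally closed subscheme, something the paper deliberately does not claim (compare parts (i) and (ii) of Theorem A in the introduction). Your observation that on $\mathrm{Hilb}_{p(t),Y}^n$ condition (b) of Definition \ref{def:geometric lifting per gli ideali} follows from condition (a) once the Hilbert polynomial is fixed (a saturated ideal contained in $I'$ with the same Hilbert polynomial must equal $I'$) is correct and a real simplification.

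The execution of step (2), however, has both slips and a genuine gap. First, the map $\pi_*\mathcal O_W(t-1)\xrightarrow{\cdot x_n}\pi_*\mathcal O_W(t)$ is \emph{not} between locally free sheaves of the same rank: the source has rank $p(t-1)$, the target $p(t)$, and it is the cokernel (not the target) that has rank $p_Y(t)$ when $x_n$ is regular. So injectivity is not non-vanishing of a single determinant; it is a maximal-rank condition on a non-square matrix, detected by lower semicontinuity of rank (complement of the vanishing of all maximal minors). Still open, but the argument should be phrased that way. Second, and more seriously, the ``uniform degree bound $\bar t$'' you flag is a real obstruction to the argument as written: knowing $\cdot x_n$ is injective in a finite degree range $[\bar t, T]$ does not force $(I_s:x_n)/I_s=0$, because this $x_n$-torsion module is a submodule of $A[\mathbf x,x_n]/I_s$ that can be generated in degrees $>T$, and its vanishing range is not controlled simply by the Castelnuovo--Mumford regularity of $I_s$ --- one cannot just invoke Gotzmann. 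The clean way to repair step (2) is to drop the degree-by-degree sheaf computation altogether and invoke the slicing (or local) criterion for flatness: if $W\to S$ is flat and $x_n$ is a nonzerodivisor on the fiber $W_{s_0}$, then in a neighborhood of $s_0$ the hyperplane section $W\cap(\mathcal H\times S)$ is flat over $S$ and $x_n$ remains a nonzerodivisor on nearby fibers, so the locus is open. Replacing your determinantal argument by this criterion would close the gap and make your alternative proof complete.
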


\begin{proof} 
Let $S$ be a Noetherian $K$-scheme, $W$ an element of $\underline{\mathrm{Hilb}}_{p(t)}^n(S)$ and $f: W \rightarrow S$ the corresponding flat projection. The fibers of $f$ in $W$ have degree equal to $\deg(Y)$ and dimension equal to $\dim(Y)+1$ because $\Delta p(t)=p_Y(t)$. 

For every irreducible closed subset $S'$ of $S$, let $W'$ be any irreducible component of $f^{-1}(S')$. By Proposition \ref{prop:Gro2}, $W'$ is dominant on $S'$. Then, also $W'\cap (\mathcal H\times S')$ is dominant on $S'$, because for every $s'\in S'$ the fiber of $s'$ in $W'$ has dimension at least $1$ by construction, and hence $s'$ has a fiber in $W'\cap (\mathcal H\times S')$ of dimension at least $0$. Indeed, the dimension of every fiber in $W'\cap (\mathcal H\times S')$ is between $\dim(Y)+1$ and $\dim(Y)$.

Recall that the dimension of the fibers of a dominant morphism is an upper semicontinuous function, namely the subset of $S'$ whose fibers in $W\cap(\mathcal H\times S')$ have dimension less than or equal to $\dim(Y)$ is open \cite[Chapter I, section 8, Corollary 3]{Mumford}. Since this dimension cannot be strictly lower than $\dim(Y)$ by the previous argument, all the fibers of the above open subset have dimension equal to $\dim(Y)$. 

In the above situation, if we assume that $W$ belongs to $\underline{\mathrm{Hilb}}_{p(t)}^n(U_e)$, where $U_e$ is the open subscheme of $\mathrm{Hilb}_{p(t)}^n$ that parameterizes the families of equidimensional subschemes of $\mathbb P^n_K$ with Hilbert polynomial $p(t)$  \cite[Th\'{e}or\`{e}me (12.2.1)(iii)]{Gro3}, 
we can also observe that the fibers in $W$ and $\mathcal H \times_{\mathrm{Spec}(K)} S$ intersect properly, implying that the degree of the fibers in $W\cap (\mathcal H \times_{\mathrm{Spec}(K)} S)$ is less than or equal to $\deg(Y)$ because the fibers and $\mathcal H \times_{\mathrm{Spec}(K)} S$ are equidimensional (see \cite[Corollary 18.5]{Harris92} in case of varieties). 

If we also assume that $W$ belongs to $\mathrm{Hilb}_{p(t),Y}^n$, so that $W$ contains $Y$, we obtain that the fibers in $W\cap (\mathcal H \times_{\mathrm{Spec}(K)} S)$ have degree equal to $\deg(Y)$ because $Y\times_{\mathrm{Spec}(K)} S \subseteq W\cap (\mathcal H \times_{\mathrm{Spec}(K)} S)$.

We can now conclude that there is an open subset in $\mathrm{Hilb}_{p(t),Y}^n$ describing all subschemes $W$ such that $W\cap (\mathcal H \times_{\mathrm{Spec}(K)} S)=Y\times_{\mathrm{Spec}(K)} S$, namely $W$ is a lifting of $Y$. It is immediate that any equidimensional lifting of $Y$ belongs to this open subset of $\mathrm{Hilb}_{p(t),Y}^n$ and, hence, locally closed subscheme of $\mathrm{Hilb}_{p(t)}^n$ by Proposition \ref{prop:flag}. 
\end{proof}

\begin{remark} 
The locally closed subscheme ${\mathrm{L}}_Y^{p(t),e}$ of ${\mathrm{Hilb}}_{p(t)}^n$ which has been introduced in the proof of Theorem \ref{th:schema che rappresenta} completely describes the locally Cohen-Macaulay liftings of $Y$ when $Y$ is a zero-dimensional scheme. 
\end{remark}


\section{The case of a quasi-stable initial ideal}
\label{sec:quasi-stable}

Thanks to Theorem \ref{th:dove} we have that the liftings of a saturated homogeneous ideal $I'\subset K[\mathbf x]$ with $x_{n-1}$ generic belong to a Gr\"obner stratum over a monomial lifting $J$ of $J':=\mathrm{in}(I')$. In this section we prove that if $J'$ is quasi-stable then $J$ is quasi-stable too (Theorem \ref{th:lifting}). 

The assumption that $J'$ is quasi-stable is not restrictive: indeed, this can be obtained by a change of coordinates on $I'$, and this change does not effect the scheme $Y=\Proj(K[\mathbf x]/I')$ from a geometric point of view.  Quasi-stability for initial ideals will allow us to use the techniques concerning marked bases over quasi-stable ideals developed in \citep{BCR2} (see \citep{ABRS} for the more general case of free modules).

\begin{lemma}\label{lemma:ss tagliato}
Let $J\subseteq A[\mathbf x,x_n]$ be a monomial ideal.
If $J_{\geq s}$ is quasi-stable for some integer $s$, then $J$ is quasi-stable.
\end{lemma}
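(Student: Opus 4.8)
The plan is to argue by descending induction on the degree of a generator, reducing statements about $J$ in low degrees to the already-known quasi-stability of $J_{\geq s}$. First I would recall the combinatorial content of Definition \ref{def:stable}: a monomial ideal $J$ is quasi-stable precisely when for every $x^\alpha\in J$ and every variable $x_j>\min(x^\alpha)$ there is an exponent $t\geq 0$ with $x_j^t x^\alpha/\min(x^\alpha)\in J$. So fix $x^\alpha\in J$ and $x_j>\min(x^\alpha)$; I must produce such a $t$. If $\deg(x^\alpha)\geq s$ there is nothing to do, since $x^\alpha\in J_{\geq s}$ and $J_{\geq s}$ is quasi-stable by hypothesis, and multiplying $x^\alpha$ by $x_j^t/\min(x^\alpha)$ does not lower the degree. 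Hence the only case to treat is $\deg(x^\alpha)=d<s$.

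For $\deg(x^\alpha)=d<s$, the key step is to climb up to degree $s$ and come back down. Consider $x^\alpha \cdot x_k^{\,s-d}$ for a suitable choice of variable $x_k\le \min(x^\alpha)$ (for instance $x_k=x_n$, or $x_k=\min(x^\alpha)$): this monomial lies in $J_{\geq s}$ and still has $\min$ equal to $\min(x^\alpha)$ (if I pick $x_k\le\min(x^\alpha)$). Applying quasi-stability of $J_{\geq s}$ to it and to the variable $x_j$ yields some $t\geq 0$ with
\[
\frac{x_j^{\,t}\, x^\alpha\, x_k^{\,s-d}}{\min(x^\alpha)}\in J_{\geq s}\subseteq J.
\]
Now I want to cancel the spurious factor $x_k^{\,s-d}$. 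This is where care is needed: I cannot simply divide inside a monomial ideal. Instead I would choose $x_k$ so that $x_k\mid\min(x^\alpha)$, i.e. take $x_k=\min(x^\alpha)$ itself. Then $x^\alpha x_k^{s-d}/\min(x^\alpha) = x^\alpha x_k^{\,s-d-1}$ is a genuine monomial with the same $\min$, it lies in $J_{\geq s}$, and quasi-stability of $J_{\geq s}$ gives $t$ with $x_j^{\,t}\,x^\alpha x_k^{\,s-d-1}\in J$ — but this still carries extra powers of $x_k$. To remove them I pass to the saturation-type argument: for each fixed exponent the membership ``$x_j^t x^\alpha/\min(x^\alpha)\in J$'' for the original $x^\alpha$ should be recovered because $J$ and $J_{\geq s}$ have the same elements in degrees $\geq s$, and a monomial of degree $<s$ lies in $J$ iff some (equivalently, every sufficiently high) multiple of it of degree $\geq s$ lies in $J_{\geq s}$.

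The cleanest route, and the one I would actually write, is therefore: observe that for a monomial $x^\mu$ one has $x^\mu\in J \iff x^\mu\cdot(\text{any monomial})\in J \iff$ some multiple of $x^\mu$ of degree $\geq s$ lies in $J_{\geq s}$ (using that $J$ is a monomial ideal and $J_t=(J_{\geq s})_t$ for $t\geq s$). Apply this with $x^\mu = x_j^{\,t}x^\alpha/\min(x^\alpha)$: I need to exhibit one $t$ such that a high-degree multiple of $x^\mu$ is in $J$. Take the multiple $x^\mu\cdot \min(x^\alpha)\cdot x_n^{\,N} = x_j^{\,t} x^\alpha x_n^{\,N}$; for $N$ large this has degree $\geq s$, and since $x^\alpha x_n^N\in J_{\geq s}$ with $\min = x_n$ (or $\min(x^\alpha)$, whichever is smaller — at worst replace $x_j$ handling accordingly), quasi-stability of $J_{\geq s}$ with respect to $x_j$ produces the required $t$. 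Assembling these observations gives $x_j^{\,t} x^\alpha/\min(x^\alpha)\in J$, which is exactly the quasi-stability condition for $J$. The main obstacle is purely bookkeeping: making sure the auxiliary variable one multiplies by does not itself become the new $\min$ (so one must multiply only by powers of $\min(x^\alpha)$ or of $x_n$, both $\le\min(x^\alpha)$), and that ``dividing by $\min(x^\alpha)$'' is always legitimate because one keeps at least one copy of $\min(x^\alpha)$ around until the very end.
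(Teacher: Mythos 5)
There is a genuine gap. Your overall strategy — case split on $\deg(x^\alpha)$, then climb to degree $\geq s$ and apply quasi-stability of $J_{\geq s}$ — is the right shape, but the variable you multiply by is the wrong one, and the saturation-type argument you fall back on to repair this is actually false. Specifically, the claim ``a monomial of degree $<s$ lies in $J$ iff some (equivalently, every sufficiently high) multiple of it of degree $\geq s$ lies in $J_{\geq s}$'' fails in one direction: take $J=(x_0x_1)$; then $x_0x_1$ is a degree-$2$ multiple of $x_0$ in $J$, yet $x_0\notin J$. Membership in a monomial ideal is preserved upward under multiplication but not recoverable from a multiple. So none of the routes you propose actually cancel the spurious factor of $x_k^{s-d}$ (or $x_n^N$): what you obtain is $\frac{x_j^t x^\alpha}{\min(x^\alpha)}\cdot x_k^{s-d}\in J$, which does not imply $\frac{x_j^t x^\alpha}{\min(x^\alpha)}\in J$. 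Moreover in your last variant (multiplying by $x_n^N$) the $\min$ becomes $x_n$ rather than $\min(x^\alpha)$, so quasi-stability lets you divide by $x_n$ but never by $\min(x^\alpha)$.

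The fix is simple and is exactly what the paper does: multiply $x^\alpha$ by powers of $x_j$ itself, not by $\min(x^\alpha)$ or $x_n$. Set $x^\beta := x^\alpha x_j^{\,s-|\alpha|}\in J_{\geq s}$. Since $x_j>\min(x^\alpha)$, one has $\min(x^\beta)=\min(x^\alpha)$. Quasi-stability of $J_{\geq s}$ applied to $x^\beta$ and $x_j$ gives some $t\geq 0$ with $\frac{x_j^{\,t}x^\beta}{\min(x^\beta)}=\frac{x_j^{\,t+s-|\alpha|}x^\alpha}{\min(x^\alpha)}\in J_{\geq s}\subseteq J$, and that exhibits the exponent $t'=t+s-|\alpha|$ required for the quasi-stability condition on $J$. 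The extra factor $x_j^{\,s-|\alpha|}$ is harmless precisely because Definition \ref{def:stable} only asks for \emph{some} nonnegative power of $x_j$, so it can be absorbed into $t'$; there is nothing to cancel. This is the key observation your proposal misses.
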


\begin{proof}
If $J_{\geq s}$ is quasi-stable, it is enough to check the condition of Definition \ref{def:stable} for every term $x^\alpha \in J$ with $\vert\alpha\vert <s$. For every $x_i > \min(x^\alpha)$, take $x^\alpha x_i^{s-\vert\alpha\vert} \in J_{\geq s}$. Then, there is an integer $t$ such that $\frac{x^\alpha x_i^{s-\vert\alpha\vert+t}}{\min(x^\alpha x_i^{s-\vert\alpha\vert})}$ belongs to $J_{\geq s}$, because $J_{\geq s}$ is quasi-stable. We conclude by observing that $\min(x^\alpha)=\min(x^\alpha x_i^{s-\vert\alpha\vert})$ by construction.
\end{proof}

\begin{theorem}\label{th:lifting}
If $J'\subseteq K[\mathbf x]$ is a saturated quasi-stable  ideal then a monomial lifting $J\subseteq A[\mathbf x,x_n]$ of $J'$ is quasi-stable.
\end{theorem}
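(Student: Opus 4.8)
First I would recall that a monomial lifting of $J'$ is a monomial ideal $J\subseteq\Aox$ for which $x_n$ is a non-zero divisor modulo $J$ and $\bigl((J,x_n)/(x_n)\bigr)^{\sat}=J'A[\mathbf x]$ under $\Aox/(x_n)\simeq A[\mathbf x]$ (cf.\ Definition \ref{def:geometric lifting per gli ideali} and Theorem \ref{th:dove}), and reduce the statement to an assertion in $K[\mathbf x]$ by stripping off $x_n$. Since $x_n$ is a non-zero divisor modulo the monomial ideal $J$ (equivalently $(J:x_n)=J$), no minimal generator of $J$ involves $x_n$, so $J=L\Aox$ where $L:=J\cap K[\mathbf x]$ is a monomial ideal of $K[\mathbf x]$. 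Under $\Aox/(x_n)\simeq A[\mathbf x]$ the ideal $\frac{(J,x_n)}{(x_n)}$ becomes $LA[\mathbf x]$, so the defining condition of a monomial lifting reads $(LA[\mathbf x])^{\sat}=J'A[\mathbf x]$, which is equivalent to $L^{\sat}=J'$ by Remark \ref{rem:generic and degrevlex}\eqref{item:estensioni e contrazioni}. I would also observe that $J$ is quasi-stable as soon as $L$ is: a term of $J$ divisible by $x_n$ has $\min=x_n$, and since $(J:x_n)=J$ its quotient by $x_n$ lies in $J$, so the condition of Definition \ref{def:stable} holds for it with $t=0$; while for a term $x^\alpha\in L$ the variables $x_j>\min(x^\alpha)$ entering that condition all belong to $\mathbf x$ and $\frac{x_j^{t}x^\alpha}{\min(x^\alpha)}$ is a term of $K[\mathbf x]$, which lies in $J=L\Aox$ exactly when it lies in $L$, so the condition for $x^\alpha$ in $J$ coincides with the one for $x^\alpha$ in $L$. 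Hence it suffices to prove that $L$ is quasi-stable.

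Next I would pass to a truncation of $J'$. Choosing $s\geq\sat(L)$ gives $L_{\geq s}=(L^{\sat})_{\geq s}=J'_{\geq s}$, so by Lemma \ref{lemma:ss tagliato} (which holds unchanged for monomial ideals of $K[\mathbf x]$) it is enough to check that $J'_{\geq s}$ is quasi-stable. For this I would use that truncations of quasi-stable ideals are quasi-stable: given $x^\alpha\in J'_{\geq s}$ and $x_j>\min(x^\alpha)$, quasi-stability of $J'$ gives $t_0\geq 0$ with $\frac{x_j^{t_0}x^\alpha}{\min(x^\alpha)}\in J'$; if $t_0=0$ then also $\frac{x_jx^\alpha}{\min(x^\alpha)}=x_j\cdot\frac{x^\alpha}{\min(x^\alpha)}\in J'$, so one may always take $t\geq 1$, and then
\[
\deg\left(\frac{x_j^{t}x^\alpha}{\min(x^\alpha)}\right)=\vert\alpha\vert+t-1\geq\vert\alpha\vert\geq s,
\]
whence $\frac{x_j^{t}x^\alpha}{\min(x^\alpha)}\in J'_{\geq s}$. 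Thus $J'_{\geq s}$ is quasi-stable, so $L$ is quasi-stable, and therefore so is $J$.

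The main obstacle is the bookkeeping of the first step: recognizing a monomial lifting of $J'$ as the extension $L\Aox$ of a monomial ideal $L\subseteq K[\mathbf x]$ with $L^{\sat}=J'$, and checking that quasi-stability is unaffected by adjoining the smallest variable $x_n$. After that translation, what remains is the elementary degree estimate above together with Lemma \ref{lemma:ss tagliato}.
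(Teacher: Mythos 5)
Your proof is correct and follows the same route as the paper's: reduce to $L = J\cap K[\mathbf x]$ (the same ideal the paper writes as $\frac{(J,x_n)}{(x_n)}\cap K[\mathbf x]$, since $x_n$ is a non-zero divisor modulo the monomial ideal $J$), note that $L_{\geq s}=J'_{\geq s}$ for $s\geq\sat(L)$, and conclude by Lemma \ref{lemma:ss tagliato}. The only difference is that you make explicit two facts the paper uses silently --- that quasi-stability of $L$ transfers to $J=L\Aox$ (where the paper appeals briefly to the ``semigroup structure'' of the monomial basis), and that the truncation $J'_{\geq s}$ inherits quasi-stability from $J'$ --- and both of your verifications are sound.
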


\begin{proof}
Consider the ideal $L=\frac{(J,x_n)}{(x_n)}\cap K[\mathbf x]$. Since quasi-stability is a property concerning the semigroup structure of the ideal generated by the minimal monomial basis of $J$, regardless of the coefficients of the polynomial ring, it is sufficient to prove that $L$ is quasi-stable.

By the hypothesis, we have $L^\sat=J'$ and hence, if $s=\sat(L)$, then $L_{\geq s}={J'}_{\geq s}$ is quasi-stable. By Lemma \ref{lemma:ss tagliato}(i), $L$ is quasi-stable too.
\end{proof}

\begin{corollary}\label{cor:lifting}
Let $I'\subseteq K[\mathbf x]$ be a homogeneous saturated ideal and $I\subseteq A[\mathbf x,x_n]$ be a lifting of $I'$. If $\In(I')$ is quasi-stable, then $\In(I)$ is a quasi-stable lifting of $\In(I')$. 
\end{corollary}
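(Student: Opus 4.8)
The plan is to deduce Corollary \ref{cor:lifting} directly from the results already established, assembling the pieces rather than reproving anything from scratch. The statement has two parts: first, that $\In(I)$ is a \emph{lifting} of $\In(I')$ in the monomial sense of Definition \ref{def:geometric lifting per gli ideali}; second, that $\In(I)$ is \emph{quasi-stable}. For the first part I would invoke Theorem \ref{th:dove}: since $I$ is a lifting of $I'$, and — after noting that the genericity of $x_{n-1}$ for $I'$ is exactly what is needed and may be assumed — that theorem tells us precisely that the liftings of $I'$ sit in a Gr\"obner stratum over a monomial lifting $J \subset A[\mathbf x, x_n]$ of $\In(I')$. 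In particular, for our specific $I$, we have $\In(I) = J$ for such a monomial lifting $J$, so $\In(I)$ is a monomial lifting of $\In(I')$.

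For the second part, once we know $J := \In(I)$ is a monomial lifting of $J' := \In(I')$, and given the hypothesis that $J'$ is quasi-stable, I would like to apply Theorem \ref{th:lifting} to conclude that $J$ is quasi-stable. The only subtlety is that Theorem \ref{th:lifting} additionally requires $J'$ to be \emph{saturated}. This is not automatic from "$\In(I')$ quasi-stable"; however, it follows from the genericity assumption on $x_{n-1}$: as observed in the proof of Theorem \ref{th:dove}, if $x_{n-1}$ is generic for $I'$ then $\In(I')$ is saturated (because the smallest variable $x_{n-1}$ of $K[\mathbf x]$ is then not a zero-divisor modulo $\In(I')$, cf.\ Remark \ref{rem:generic and degrevlex}\eqref{item:remgeneric}). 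So the hypotheses of Theorem \ref{th:lifting} are met, and we conclude.

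Concretely, the proof reads: Up to a change of coordinates we may assume $x_{n-1}$ is generic for $I'$; then $J' := \In(I')$ is saturated and quasi-stable. By Theorem \ref{th:dove}, $J := \In(I)$ is a monomial lifting of $J'$. By Theorem \ref{th:lifting}, $J$ is quasi-stable. Hence $\In(I)$ is a quasi-stable lifting of $\In(I')$, as claimed.

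I expect the main (indeed only) obstacle to be the bookkeeping around the genericity of $x_{n-1}$: the corollary as stated does not explicitly carry this hypothesis, so one must either add it, or argue that it can be arranged by a change of coordinates that does not disturb the quasi-stability of $\In(I')$ — and here one should be slightly careful, since quasi-stability of an initial ideal is itself coordinate-dependent. The cleanest resolution is to note that a generic change of coordinates simultaneously makes $x_{n-1}$ generic for $I'$ and puts $\In(I')$ in the generic-initial (hence Borel-fixed, hence quasi-stable) position, so no genuine loss of generality occurs; alternatively, one simply states the corollary under the standing assumption of the section that $x_{n-1}$ is generic for $I'$ and $\In(I')$ is quasi-stable, in which case the proof is the three-line assembly above with no obstacle at all.
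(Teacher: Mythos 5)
Your overall strategy is the right one — reduce to Theorem~\ref{th:dove} for the ``lifting'' part and Theorem~\ref{th:lifting} for the ``quasi-stable'' part — but you have misdiagnosed the obstacle around the genericity of $x_{n-1}$, and both of your proposed resolutions are flawed. The point you are missing is that the genericity of $x_{n-1}$ for $I'$ is not a hypothesis to be added or arranged: it is already a \emph{consequence} of the stated hypothesis that $\In(I')$ is quasi-stable. Concretely, the smallest variable is generic for any quasi-stable ideal (this is \cite[Prop.\ 4.4(ii)]{Seiler2009II}, which the paper cites at exactly this point), so $x_{n-1}$ is generic for $\In(I')$; and by Remark~\ref{rem:generic and degrevlex}\eqref{item:remgeneric}, genericity of the smallest variable for $\In(I')$ is equivalent to genericity for $I'$ itself. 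Once this is in hand, Theorem~\ref{th:dove} applies to show that $\In(I)$ is a monomial lifting of $\In(I')$, and — as you correctly observe — $\In(I')$ is saturated by the Bayer--Stillman argument in the proof of Theorem~\ref{th:dove}, so Theorem~\ref{th:lifting} gives quasi-stability of $\In(I)$.

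Your change-of-coordinates escape route, by contrast, does not actually repair the argument. A generic linear change of coordinates replaces $\In(I')$ by $\gin(I')$ and $\In(I)$ by $\gin(I)$; you would then be proving that $\gin(I)$ is a quasi-stable lifting of $\gin(I')$, which is a statement about different ideals than the $\In(I')$ and $\In(I)$ appearing in the corollary. Likewise, adding ``$x_{n-1}$ generic for $I'$'' as an extra hypothesis would needlessly weaken the corollary, since that condition is automatic from quasi-stability. So the gap is real: the corollary as stated is correct and needs no extra hypothesis, and the step you treat as problematic is precisely where the paper invokes Seiler's result together with Remark~\ref{rem:generic and degrevlex}\eqref{item:remgeneric}.
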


\begin{proof}
This is a consequence of the fact that the smallest variable is generic for any quasi-stable ideal (see \cite[Prop. 4.4(ii)]{Seiler2009II}), of Remark  \ref{rem:generic and degrevlex}\eqref{item:remgeneric} and of Theorems \ref{th:lifting} and \ref{th:dove}.
\end{proof}

\begin{example}\label{ex:un primo calcolo} 
In this example we apply Theorem \ref{th:lifting}. Let $\mathbf x:=\{x_0,\dots,x_3\}$ and consider the saturated ideal $I'=(x_0^2,x_0x_1+x_1^2,x_0x_2)=(x_1^2,x_0) \cap (x_2,x_1^3,x_0x_1+x_1^2,x_0^2) \subseteq K[\mathbf x]$. 
The reduced Gr\"obner basis of $I'$ is $G'=\{x_0x_2, x_0x_1 + x_1^2, x_0^2, x_1^2x_2, x_1^3\}$, hence the initial ideal is the quasi-stable ideal $J':=\In(I')=(x_0^2,x_0x_1,x_0x_2,$ $x_1^2x_2,x_1^3)$.
The Hilbert function of $\frac{K[\mathbf x]}{I'}$ is 
\[
h_{K[\mathbf x]/I'}(0)=1, h_{K[\mathbf x]/I'}(1)= 4  \text{ and } h_{K[\mathbf x]/I'}(t)=2t+3 \text{ for every }t\geq 2, 
\]
hence $I'$ defines a curve $Y$ in $\mathbb P^3_K$. A Hilbert polynomial $p(t)$ such that $\Delta p(t)=2t+3$ must be of type $p(t)=t^2+4t+c$. 
Assume that $W\subset \mathbb P^4_K$ is a surface and is a lifting of $Y$ over a Noetherian $K$-algebra $A$. The Hilbert polynomial of $W$ is $p(t)=t^2+4t+c$ with $c\geq 0$. Indeed, since $x_4$ is not a zero-divisor on $S/I$, we have the short exact sequence
$$ 0 \rightarrow (A[\mathbf x,x_4]/I)_{t-1} \xrightarrow{\cdot x_4} (A[\mathbf x,x_4]/I)_t \rightarrow (A[\mathbf x,x_4]/(I,x_4))_t \rightarrow 0,$$
that gives $h_{A[\mathbf x,x_4]/(I,x_4)}(t) = \Delta h_{A[\mathbf x,x_4]/I}(t)$, in particular $p_{A[\mathbf x,x_4]/{(I,x_4)}}(t)=\Delta p_{A[\mathbf x,x_4]/{I}}(t)$ and $\Delta h_W(t)\geq h_Y(t)$ for every $t$. As a consequence, $h_W(t)\geq \sum_{i=0}^t h_Y(i)$, hence 
\[
h_{A[\mathbf x,x_4]/I}(0)= 1, h_{A[\mathbf x,x_4]/I}(1)= 5 \text{ and } h_{A[\mathbf x,x_4]/I}(t)\geq t^2+4t \text{ for every }t\geq 2, 
\]
and $c=0$ is the minimal possible value of the constant term $c$ for the Hilbert polynomial of $W$. 
We now investigate the cases $c=0$ and $c=1$. In the following, we compute quasi-stable ideals by the algorithm described in \citep{Bertone}. 

If $c=0$, among $56$ possible quasi-stable saturated ideals there is a unique quasi-stable ideal $J:=J'\cdot K[\mathbf x,x_4]\subset K[\mathbf x,x_4]$ such that $\left(\frac{(J,x_4)}{(x_4)}\right)^{\sat} \cap K[\mathbf x]= J'$. Hence, in this case the liftings of $I'$ belong to the family $\underline{\St}_J(A)$ and exactly are the $x_4$-liftings of $I'$ because $J$ and $J'$ share the same generators, using Theorem \ref{th:dove}.

If $c=1$,  among $176$ possible quasi-stable saturated ideals there are only the following $5$ quasi-stable ideals $J^{(i)}\subset K[\mathbf x,x_4]$ such that $\left(\frac{(J^{(i)},x_4)}{(x_4)}\right)^{\sat}\cap K[\mathbf x]=\In(I')$:
$$\begin{array}{l}
J^{(1)}=(x_2x_0, x_1x_0, x_3x_0^2, x_2x_1^2, x_1^3, x_0^3),\\
J^{(2)}=(x_2x_0, x_0^2, x_3x_1x_0, x_2x_1^2, x_1^3, x_1^2x_0),\\
J^{(3)}=(x_1x_0, x_0^2, x_3x_2x_0, x_2^2x_0, x_2x_1^2, x_1^3) 
\\
J^{(4)}=(x_2x_0, x_1x_0, x_0^2, x_2x_1^2, x_3x_1^3, x_1^4),\\
J^{(5)}=(x_2x_0, x_1x_0, x_0^2, x_1^3, x_3x_2x_1^2, x_2^2x_1^2) 
\end{array}
$$
Hence, in this case the liftings of $I'$ belong to the union of Gr\"obner strata over the above quasi-stable ideals in the Hilbert scheme $\mathrm{Hilb}_{t^2+4t+1}^4$. Now, we apply the construction described in the proof of Theorem \ref{th:parametrizzazione} to these monomial ideals and obtain the following families of Gr\"obner bases for the ideals of type $H$, modulo the defining ideal of the Gr\"obner stratum over $J^{(k)}$:

$J^{(1)}$: $G^{(1)}=\{x_2x_0, x_0x_1+x_1^2, x_3x_0^2, x_2x_1^2, x_1^3, x_0^3 \}$,

$J^{(2)}$: $G^{(2)}(a)=\{x_2x_0, 
x_0^2 + ax_0x_1 + ax_1^2, 
x_0x_1x_3+ x_1^2x_3,
x_2x_1^2,
x_1^3, 
x_1^2x_0\}$ where $a\in A$,

$J^{(3)}$: $G^{(3)}(b,c)=\{ x_0x_1+ x_1^2 + bx_0x_2,  
x_0^2 + cx_0x_2, 
x_3x_2x_0,
x_0x_2^2, 
x_2x_1^2,
x_1^3\}$ where $b,c\in A$,

$J^{(4)}$: $G^{(4)}=\{x_2x_0, 
x_0x_1+x_1^2,
x_0^2,
x_2x_1^2,
x_3x_1^3,
x_1^4\}$,

$J^{(5)}$:  $G^{(5)}(d)=\{x_0x_2, 
x_0x_1+x_1^2, 
x_0^2, 
x_1^3+dx_1^2x_2,  
x_1^2x_2x_3,
x_1^2x_2^2\}$ where $d\in A$.
 
\noindent In conclusion, for every $k=1,\dots,5$, the liftings of $I'$ in $\St_{J^{(k)}}$ are the $x_4$-liftings of the ideals generated by the corresponding Gr\"obner bases ${(G^{(k)}(a,b,c,d))}$, modulo the defining ideal of the Gr\"obner stratum over $J^{(k)}$.
\end{example}

\begin{example}\label{ex:lCM} 
Consider a field $K$ with $\mathrm{char}(K)=0$ and a $K$-algebra $A$. In the present example we test our methods in order to characterize the locally Cohen-Macaulay liftings in $\mathbb P_A^3$ of the double point $Y\subset \mathbb P^2_K$ defined by the ideal $I'=(x_0,x_1^2)\subset K[x_0,x_1,x_2]$, where $J'=\In(I')=I'$ is quasi-stable. For every Hilbert polynomial $p(t)$, a lifting $W$ of $Y$ with Hilbert polynomial $p(t)$ has embedded or isolated points if and only if $W$ contains a lifting with Hilbert polynomial $p(t)-1$. 

For $p(t)=2t+1$ we find a unique monomial quasi-stable lifting $J=(x_0,x_1^2)\subset A[x_0,x_1,x_2,x_3]$ of $J'$.  Furthermore, $H=I'\subset K[x_0,x_1,x_2]$ is the unique ideal such that $H^\sat=I'$ and $\mathrm{in}(H)=J$.
Hence, $\underline{\mathrm{L}}_Y^{2t+1}(A)$ consists of the $x_3$-liftings of the ideal $(x_0,x_1^2)$. These liftings are generated by the following Gr\"obner basis, which depends on the free parameters $\alpha, \beta, \gamma, \delta \in A$:
\[
G(\alpha,\beta,\gamma,\delta)=\{x_{{0}}+\alpha\,x_{{3}},x_{{1}}^{2}+\beta\,x_{{1}}x_{{3}}+
\gamma\,x_{{2}}x_{{3}}+\delta\,x_{{3}}^{2}\}.
\]

For $p(t)=2t+2$, we find two monomial quasi-stable liftings $J^{(1)}=(x_0,x_1^3,x_1^2x_2)$ and $J^{(2)}=(x_0^2,x_0x_1,x_1^2,x_0x_2)$ of $J'$. By direct computation we see that $J^{(1)}$ (resp.~$J^{(2)}$) is the unique ideal in $K[x_0,x_1,x_2]$ such that its saturation is $I'$ and its initial ideal is $J^{(1)}$ (resp.~$J^{(2)}$). 
Hence, $\underline{\mathrm{L}}_Y^{2t+2}(A)$ consists of the $x_3$-liftings of the ideals $J^{(1)}$ and $J^{(2)}$. We now study them in order to find the locally Cohen-Macaulay liftings of $Y$ with Hilbert polynomial $2t+2$.

Every $x_3$-lifting of $J^{(1)}$ 
defines a plane curve with isolated or embedded points and hence is not locally Cohen-Macaulay. 

Every $x_3$-lifting of $J^{(2)}$ is generated by polynomials of the following type
\begin{equation}\label{eq:ex75groebner}
\begin{array}{c}
x_{{0}}^{2}+c_{{1}}x_{{0}}x_{{3}}+ \left( c_{{1}}c_{{8}}-c_{{8}}^{2} \right) x
_{{3}}^{2},\quad
x_{{0}}x_{{1}}+c_{{2}}c_{{8}}x_{{3}}^{2}+c_{{2}}x_{{0}}x_
{{3}}+c_{{8}}x_{{1}}x_{{3}},\\
x_{{1}}^{2}+c_{{3}}x_{{0}}x_{{3}}+c_{{4
}}x_{{1}}x_{{3}}+c_{{5}}x_{{2}}x_{{3}}+ \left( c_{{1}}c_{{3}}-c_{{2}
}^{2}+c_{{2}}c_{{4}}-c_{{3}}c_{{8}}+c_{{5}}c_{{6}} \right) x_{{3}}^{
2},\\
x_{{0}}x_{{2}}+c_{{6}}x_{{0}}x_{{3}}+c_{{7}}x_{{1}}x_{{3}}+c_{{8}}x_
{{2}}x_{{3}}+ \left(\frac{c_{{4}}c_{{7}}}{2}+c_{{6}}c_{{8}} \right) {x_{{3
}}}^{2}
\end{array}
\end{equation}
with $c_1,\dots,c_8\in A$ such that $c_{{5}}c_{{7}}=c_{{3}}c_{{7}}=2\,c_{{2}}c_{{7}}-c_{{4}}c_{{7}}=c_
{{1}}c_{{7}}-2\,c_{{7}}c_{{8}}=0$.
The assumptions on the $c_i$'s ensure that the polynomials in \eqref{eq:ex75groebner} form a Gr\"obner basis.

A $x_3$-lifting of $J^{(2)}$ is contained in the ideal generated by $G(\alpha, \beta, \gamma, \delta)$, with $\alpha, \beta, \gamma, \delta \in A$, if and only if
\begin{equation}\label{eq:ex75nonlCM}
 c_{{1}}=c_8=\alpha,c_{{4}}=\beta,c_{{5}}=\gamma,c_{{2}}=c_{{3}}=c_{{6}}=c_{{7}}=0. 
 \end{equation}
Observe that $\delta=0$. Then,  the conditions \eqref{eq:ex75nonlCM} characterize the liftings of the double point $Y$ which are not locally Cohen-Macaulay.

Summing up, the locally Cohen-Macaulay liftings of $I'$ with Hilbert polynomial $2t+2$ are $x_3$-liftings of $J^{(2)}$ having Gr\"obner basis as in \eqref{eq:ex75groebner} and such that the $c_i$'s do not satisfy equations~\eqref{eq:ex75nonlCM}.
\end{example}


\section{Backgroud II: marked functor over a truncation ideal}\label{sec:backgroundMarkedBases}

The results of Section \ref{sec:quasi-stable} are preliminary for the proof of the representability of $\underline{\mathrm{L}}_Y^{p(t)}$, which is described in Section \ref{sec:reprLYp} and uses the notion of marked functor. First, we need to recall the notion of Pommaret basis.

\begin{definition}\citep{Seiler2009I}\label{def:Pommaret}
Given a term $x^\alpha$ and $x_j=min(x^\alpha)$, the set $\mathcal C_P(x^\alpha):=\{x^\delta x^\alpha \ \vert \ \delta_i=0 \ \forall i<j\}$ is the \emph{Pommaret cone} of $x^\alpha$. Given a finite set $M$ of terms, its \emph{Pommaret span} is $\cup_{x^\alpha\in M} \mathcal C_{\mathcal P}(x^\alpha)$. The finite set of terms $M$ is a \emph{Pommaret basis $\mathcal P((M))$} of the ideal $(M)$ if the Pommaret span of $M$ coincides with the set of terms in $(M)$ and the Pommaret cones of the terms in $M$ are pairwise disjoint. 
\end{definition}

A monomial ideal $J$ is quasi-stable if and only if it has a Pommaret basis $\mathcal P(J)$ (see \cite[Definition 4.3 and Proposition 4.4]{Seiler2009II}). A {\em marked polynomial} is a polynomial $F$ together with a specified term of $\supp(F)$ that will be called {\em head term of $F$} and denoted by $\Ht(F)$ \citep{RStu}. 

\begin{definition}\label{def:cose marcate} \cite[Definition 5.1]{CMR13}
Let $J\subset \Aox$ be a quasi-stable ideal.

A {\em $\mathcal P(J)$-marked set} (or marked set over $\mathcal P(J)$) $\mathcal G$ is a set of homogeneous monic marked polynomials $F_\alpha$ in $\Aox$ such that  the head terms $Ht(F_\alpha)=x^\alpha$ are pairwise different and form the Pommaret basis $\mathcal P(J)$ of $J$, and $\supp(F_\alpha-x^\alpha)\subset \mathcal N(J)$.

A $\mathcal P(J)$\emph{-marked basis} (or marked basis over $\mathcal P(J)$) $\mathcal G$ is a $\mathcal P(J)$-marked set such that $\mathcal N(J)$ is a basis of $\Aox/(\mathcal G)$ as an $A$-module, i.e.~$\Aox=(\mathcal G)\oplus \langle \cN (J) \rangle$ as an $A$-module.
%
\end{definition} 

Let $J\subset A[\mathbf x,x_n]$ be a saturated quasi-stable ideal and $m$ a non-negative integer. The ideal $J_{\geq m}$ is called {\em $m$-truncation} of $J$ and is quasi-stable, because $J$ is.
Referring to \citep{CMR13,LR2,BCR2}, we now recall the definition of marked functor over the $m$-truncation of a saturated quasi-stable ideal. 

\begin{definition}\label{def:markedscheme}\citep{BCR2,LR2} 
The {\em marked functor} \ $\underline{\mathrm{Mf}}_{\mathcal P(J_{\geq m})}:  \underline{\text{Noeth-}K\text{-Alg}}\rightarrow \underline{\mathrm{Sets}}$ \ 
associates to every Noetherian $K$-algebra $A$ the set
$$\underline{\mathrm{Mf}}_{\mathcal P(J_{\geq m})} (A) :=\{I\text{ saturated ideal in }A[\mathbf x,x_n]\  : \ A[\mathbf x,x_n]=I_{\geq m}\oplus\langle \mathcal N(J_{\geq m})\rangle\} =$$
$$= \{I\text{ saturated ideal in }A[\mathbf x,x_n]\  : I_{\geq m} \text{ is generated by a } \mathcal P(J_{\geq m})\text{-marked basis}\}$$
and to every $K$-algebra morphism  $\phi : A\rightarrow B$ the function  
$${\phi} \colon \underline{\mathrm{Mf}}_{\mathcal P(J_{\geq m})}(A) \rightarrow \underline{\mathrm{Mf}}_{\mathcal P(J_{\geq m})}(B) \ \hbox{ given by } \  {\phi} (I )= I\otimes_A B.$$
\end{definition}

The marked functor $\underline{\mathrm{Mf}}_{\mathcal P(J_{\geq m})}$ is a {\em representable subfunctor} of the Hilbert functor $\underline{\mathrm{Hilb}}^n_{p(t)}$, where $p(t)$ is the Hilbert polynomial of $A[\mathbf x,x_n]/J$, and we denote by $\mathrm{Mf}_{\mathcal P(J_{\geq m})}$ its representing scheme \cite[Theorem 6.6]{BCR2}.

\begin{remark} 
The fact that $I$ belongs to $\underline{\mathrm{Mf}}_{\mathcal P(J_{\geq m})} (A)$ {\em does not imply} that $I$ belongs to $\underline{\mathrm{Mf}}_{\mathcal P(J)} (A)$ (e.g. \cite[Example 3.8]{BCR2}). If $I$ belongs to $\underline{\mathrm{Mf}}_{\mathcal P(J_{\geq m})} (A)$, then the Hilbert function of $\Aox/I_{\geq m}$ is equal to the Hilbert function of $\Aox/J_{\geq m}$.
\end{remark}

\begin{remark} 
The original definition of marked functor over a $m$-truncation of a quasi-stable ideal introduced in \citep{LR2} stated: 
$$\underline{\mathrm{Mf}}_{\mathcal P(J_{\geq m})} (A) :=\{M\text{ homogeneous ideal in }A[\mathbf x,x_n]\  : \ A[\mathbf x,x_n]=M\oplus\langle \mathcal N(J_{\geq m})\rangle\}.$$
Our presentation is equivalent to the original one because a homogeneous ideal $M\subset A[\mathbf x,x_n]$ such that $A[\mathbf x,x_n]=M\oplus\langle \mathcal N(J_{\geq m})\rangle$ is exactly of type $I_{\geq m}$, where $I=M^{\sat}$ (see \cite[Corollary 3.7]{BCR2}).
\end{remark}

\section{The functor $\underline{\mathrm{L}}_Y^{p(t)}$ is representable}\label{sec:reprLYp}

We can now constructively prove that the functor $\underline{\mathrm{L}}_Y^{p(t)}$ is representable. 

From now, $I'\subset K[\mathbf x]$ is a homogeneous saturated ideal with $J'=\mathrm{in}(I')$ quasi-stable, $J\subset A[\mathbf x,x_n]$ is a monomial lifting of $J'$ and $Y$ is the projective subscheme defined by $I'$.

\begin{definition}\label{def:sottofuntori aperti}
We denote by $\F$ the subfunctor $\underline{\mathrm{L}}_Y^{p(t)}\cap \underline{\mathrm{Mf}}_{\mathcal P(J_{\geq m})}:  \underline{\text{Noeth-}K\text{-Alg}}\rightarrow \underline{\mathrm{Sets}}$ \ of $\underline{\mathrm{Hilb}}^n_{p(t)}$ associating to every Noetherian $K$-algebra $A$ the set
$\F(A) := \underline{\mathrm{L}}_Y^{p(t)}(A)~\cap~\underline{\mathrm{Mf}}_{\mathcal P(J_{\geq m})}~(A)$ 
and to every $K$-algebra morphism  $\phi : A\rightarrow B$ the function  
${\phi} \colon \F(A) \rightarrow \F(B)$ given by ${\phi}(I )= I\otimes_A B$.
\end{definition}

\begin{lemma} \label{lemma:sottofuntore aperto}
Let $I'\subset K[\mathbf x]$ be a homogeneous saturated ideal with $J'=\mathrm{in}(I')$ quasi-stable, $J\subset A[\mathbf x,x_n]$ a monomial lifting of $J'$ and $\rho$ the satiety of $(J,x_n)/(x_n)\subset A[\mathbf x]$. If $m\geq \rho$ then:
\begin{enumerate}[(i)]
\item A saturated ideal $I\subset A[\mathbf x,x_n]$ belongs to $\underline{\mathrm{L}}_{Y,J_{\geq m-1}}^{p(t)}(A)$ if and only if $I$ is a lifting of $I'$ and $I_{\geq m-1}$ has a $\mathcal P(J_{\geq m-1})$-marked basis.
\item\label{lemma:sottofuntore aperto_ii} $\underline{\mathrm{L}}_{Y,J_{\geq m-1}}^{p(t)}$ is an open subfunctor of $\underline{\mathrm{L}}_Y^{p(t)}$. 
\item \label{lemma:sottofuntore aperto_iii} $\underline{\mathrm{L}}_{Y,J_{\geq m}}^{p(t)}$ is isomorphic to $\underline{\mathrm{L}}_{Y,J_{\geq m-1}}^{p(t)}$.
\end{enumerate}
\end{lemma}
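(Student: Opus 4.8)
The plan is to prove the three statements in order, exploiting the explicit description of liftings via $x_n$-liftings (Proposition~\ref{prop:geometric lifting}) and Theorem~\ref{th:dove}, together with the properties of marked functors recalled in Section~\ref{sec:backgroundMarkedBases}. The running hypothesis $m \geq \rho$, where $\rho = \sat\big((J,x_n)/(x_n)\big)$, is what guarantees that the $m$-truncation level is high enough to reconstruct the correct saturated ideal in the hyperplane section; the slightly stronger-looking hypothesis $m \geq \rho$ (rather than $m-1 \geq \rho$) is used so that both $J_{\geq m}$ and $J_{\geq m-1}$ behave well. Throughout, Corollary~\ref{cor:lifting} tells us that for a lifting $I$ of $I'$ the initial ideal $\In(I)$ is a quasi-stable lifting of $J'$, so after a change of coordinates we may assume $\In(I)=J$ and it makes sense to intersect with $\underline{\mathrm{Mf}}_{\mathcal P(J_{\geq m-1})}$.

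For part (i), the forward implication is immediate from Definition~\ref{def:sottofuntori aperti}: membership in $\underline{\mathrm{L}}_{Y,J_{\geq m-1}}^{p(t)}(A)$ means $I$ is a lifting of $I'$ with Hilbert polynomial $p(t)$ and $I \in \underline{\mathrm{Mf}}_{\mathcal P(J_{\geq m-1})}(A)$, and the latter says precisely that $I_{\geq m-1}$ has a $\mathcal P(J_{\geq m-1})$-marked basis. For the converse, I would start from a lifting $I$ of $I'$ such that $I_{\geq m-1}$ carries a $\mathcal P(J_{\geq m-1})$-marked basis and must check two things: first, that the Hilbert polynomial of $I$ is indeed $p(t)$ — this follows because the marked basis structure forces the Hilbert function of $A[\mathbf x,x_n]/I_{\geq m-1}$ to agree with that of $A[\mathbf x,x_n]/J_{\geq m-1}$, hence the Hilbert polynomials coincide with $p(t)$, the Hilbert polynomial of $A[\mathbf x,x_n]/J$; second, that $\In(I)$ is precisely $J$ (not merely some lifting of $J'$), which is where the condition $m \geq \rho$ enters via Theorem~\ref{th:dove} and equation~\eqref{eq:saturation}: the truncation $J_{\geq m-1}$ together with the hyperplane-section data determines $J$ uniquely. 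I would invoke Remark on marked functors (the Hilbert function equality) and the characterization of liftings through $x_n$-liftings to close this.

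For part (\ref{lemma:sottofuntore aperto_ii}), I would argue that $\underline{\mathrm{L}}_{Y,J_{\geq m-1}}^{p(t)} = \underline{\mathrm{L}}_Y^{p(t)} \cap \underline{\mathrm{Mf}}_{\mathcal P(J_{\geq m-1})}$ as subfunctors of $\underline{\mathrm{Hilb}}^n_{p(t)}$, and then use that $\underline{\mathrm{Mf}}_{\mathcal P(J_{\geq m-1})}$ is an open subfunctor of $\underline{\mathrm{Hilb}}^n_{p(t)}$ (this is exactly the content of \cite[Theorem 6.6]{BCR2}, recalled after Definition~\ref{def:markedscheme}): the intersection of an open subfunctor with $\underline{\mathrm{L}}_Y^{p(t)}$ is open in $\underline{\mathrm{L}}_Y^{p(t)}$. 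The only subtlety is that, a priori, the marked scheme $\mathrm{Mf}_{\mathcal P(J_{\geq m-1})}$ is an open subscheme of the Hilbert scheme only for the specific monomial ideal $J$; but since we range over all monomial liftings $J$ of $J'$ and any lifting of $I'$ has, up to coordinates, its initial ideal among them (Corollary~\ref{cor:lifting}), openness is preserved. I expect this to be the most delicate point: making precise that "lifting of $I'$ lying in this particular marked scheme" is an \emph{open} condition inside the functor of \emph{all} liftings, rather than just a locally closed one — this is exactly the improvement over the Gröbner-stratum description emphasized in the introduction, and it hinges on the representability and openness of the marked functor.

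For part (\ref{lemma:sottofuntore aperto_iii}), I would exhibit a natural transformation $\underline{\mathrm{L}}_{Y,J_{\geq m}}^{p(t)} \to \underline{\mathrm{L}}_{Y,J_{\geq m-1}}^{p(t)}$ and show it is an isomorphism on each $A$. The map sends a lifting $I$ (which is saturated) to itself; one must check that if $I_{\geq m}$ has a $\mathcal P(J_{\geq m})$-marked basis then $I_{\geq m-1}$ has a $\mathcal P(J_{\geq m-1})$-marked basis, and conversely. Since $m \geq \rho \geq \sat\big((J,x_n)/(x_n)\big)$, and $I$ is a genuine lifting so that $x_n$ is generic for $I$ and $I$ is saturated, the degree-$(m-1)$ piece of $I$ is already forced by the lower truncation data: both $I_{\geq m}$ and $I_{\geq m-1}$ generate the same saturated ideal $I$, and the Hilbert functions of $A[\mathbf x,x_n]/J_{\geq m}$ and $A[\mathbf x,x_n]/J_{\geq m-1}$ agree in degrees $\geq m$ and the difference in degree $m-1$ is accounted for by the saturation hypothesis. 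Concretely, I would use \cite[Corollary 3.7]{BCR2} (recalled in the final Remark of Section~\ref{sec:backgroundMarkedBases}), which identifies ideals $M$ with $A[\mathbf x,x_n] = M \oplus \langle \mathcal N(J_{\geq m})\rangle$ with truncations $I_{\geq m}$ of the saturated ideal $I = M^{\sat}$: this gives a canonical bijection between the two marked-functor descriptions once we know $m$ is above the satiety, and the equidimensionality/Hilbert-polynomial data is unchanged under passing between $I_{\geq m}$ and $I_{\geq m-1}$. The naturality of this identification in $A$ is then routine, since extension of scalars commutes with truncation and with taking marked bases (Proposition~\ref{prop:funtore}).
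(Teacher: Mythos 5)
Your plan matches the paper's (terse) proof, which cites Definition~\ref{def:sottofuntori aperti} together with openness and the truncation-isomorphism property of marked functors. However, your part (i) contains a genuine error: you claim one must also verify that $\In(I)=J$. This is neither required nor true in general. By Definition~\ref{def:sottofuntori aperti}, $\underline{\mathrm{L}}_{Y,J_{\geq m-1}}^{p(t)}(A)=\underline{\mathrm{L}}_Y^{p(t)}(A)\cap\underline{\mathrm{Mf}}_{\mathcal P(J_{\geq m-1})}(A)$, and the marked-functor condition only asks that $I_{\geq m-1}$ have a $\mathcal P(J_{\geq m-1})$-marked basis --- strictly weaker than $\In(I)=J$, since a marked basis need not be a Gr\"obner basis for any term order. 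The paper's own Example~\ref{ex:esempio bello} exhibits an ideal $I\in\underline{\mathrm{L}}_{Y,J^{(2)}_{\geq m-1}}^{p(t)}(K)$ with $\In(I)=J^{(3)}\ne J^{(2)}$. This strict enlargement of the Gr\"obner stratum inside the marked scheme is precisely what makes the marked covering \emph{open} rather than merely locally closed; your sketch, pursued literally, would attempt to prove the false statement $\In(I)=J$ and stall. The only nontrivial content of (i) is that the $\mathcal P(J_{\geq m-1})$-marked basis condition forces the Hilbert function of $A[\mathbf x,x_n]/I$ to agree with that of $A[\mathbf x,x_n]/J$ in degrees $\geq m-1$, hence $p_{A[\mathbf x,x_n]/I}=p(t)$ automatically.

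The ``subtlety'' you raise in (ii) about a fixed $J$ versus ranging over all liftings is misplaced here; it belongs to the covering argument of Theorem~\ref{th:funtore di punti}, not this lemma. For a single fixed $J$, openness of $\underline{\mathrm{L}}_{Y,J_{\geq m-1}}^{p(t)}$ in $\underline{\mathrm{L}}_Y^{p(t)}$ is an immediate consequence of the openness of $\underline{\mathrm{Mf}}_{\mathcal P(J_{\geq m-1})}$ in $\underline{\mathrm{Hilb}}^n_{p(t)}$, and nothing more is being asserted. Your treatment of (iii) is on the right track and matches the paper's appeal to the cited results on marked functors over truncations.
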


\begin{proof}
The statements are direct consequences of Definition \ref{def:sottofuntori aperti} and of properties of marked functors. In particular, item \eqref{lemma:sottofuntore aperto_ii} is a consequence of the openness of  $\underline{\mathrm{Mf}}_{\mathcal P(J_{\geq m})}$ in $\underline{\mathrm{Hilb}}^n_{p(t)}$ (see \citep{LR2} and \cite[Proposition 6.13]{BCR2}); item \eqref{lemma:sottofuntore aperto_iii} follows from the analogous property of marked functors (see \cite[Theorem 5.7]{BCLR}, \cite[Theorem 3.4]{LR2} if $J$ is strongly stable and \cite[Section 6]{BCR2} in the more general case $J$ is quasi-stable).
\end{proof}

\begin{lemma}\label{lemma:Pommaret}
Let $J'\subset K[\mathbf x]$ be a saturated quasi-stable ideal, $J\subset A[\mathbf x,x_n]$ a monomial lifting of $J'$ and $\rho$ the satiety of $(J,x_n)/(x_n)\subset A[\mathbf x]$. If $m\geq \rho$ then: 
\begin{itemize}
\item[(i)]$\left(\frac{(J,x_n)}{(x_n)}\right)_{\geq m}={J'}_{\geq m}\otimes A$, and 
\item[(ii)] $\mathcal P({J'}_{\geq m})= \{ x^\alpha \in \mathcal P(J_{\geq m}) \ : \  x^\alpha \hbox{ is not divisible by } x_n\}$.
\end{itemize}
\end{lemma}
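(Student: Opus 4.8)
\textbf{Proof plan for Lemma~\ref{lemma:Pommaret}.}
The plan is to deduce both items from the description of liftings already available, principally Corollary~\ref{cor:lifting} and the structure theory of quasi-stable ideals under truncation. First I would establish item (i). Since $J$ is a monomial lifting of $J'$, by definition $\left(\frac{(J,x_n)}{(x_n)}\right)^\sat = J'\otimes_K A$, so the ideal $L:=\frac{(J,x_n)}{(x_n)}\subset A[\mathbf x]$ satisfies $L^\sat = J'\otimes_K A$ and $\rho=\sat(L)$. By definition of satiety this means $L_t = (L^\sat)_t$ for every $t\geq \rho$, hence $L_{\geq m} = (L^\sat)_{\geq m} = (J'\otimes_K A)_{\geq m} = {J'}_{\geq m}\otimes_K A$ whenever $m\geq\rho$; I would note that the last equality uses flatness of $A$ over $K$ (Remark~\ref{rem:generic and degrevlex}\eqref{item:estensioni e contrazioni}), so that truncation commutes with the extension of scalars. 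This gives (i) directly.

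For item (ii) I would argue on the level of Pommaret bases, which by Definition~\ref{def:Pommaret} and the cited characterization (\cite[Proposition 4.4]{Seiler2009II}) exist because ${J'}_{\geq m}$ and $J_{\geq m}$ are quasi-stable: $J_{\geq m}$ is quasi-stable by Theorem~\ref{th:lifting} and Lemma~\ref{lemma:ss tagliato}, and ${J'}_{\geq m}$ is quasi-stable because $J'$ is. The heart of the matter is that $\frac{(J_{\geq m},x_n)}{(x_n)} = \left(\frac{(J,x_n)}{(x_n)}\right)_{\geq m}$ as monomial ideals (this is immediate for monomial ideals: quotienting by $x_n$ just drops the generators divisible by $x_n$, and this commutes with truncation), so by part (i) this equals ${J'}_{\geq m}\otimes_K A$. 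Since $J$ is a monomial lifting, Theorem~\ref{FM} (or rather the monomial analogue exploited throughout) tells us the minimal generators of $J$ split into those not divisible by $x_n$, which are exactly the minimal generators of $L$, and those divisible by $x_n$; the same splitting persists after truncation. I would then invoke the fact that the Pommaret basis of a quasi-stable monomial ideal in $A[\mathbf x,x_n]$ which is \emph{saturated up to degree $m$} and whose passage to $A[\mathbf x]$ (setting $x_n=0$) is again quasi-stable is compatible with this decomposition: the terms of $\mathcal P(J_{\geq m})$ not divisible by $x_n$ are precisely a Pommaret basis of $\frac{(J_{\geq m},x_n)}{(x_n)} = {J'}_{\geq m}\otimes_K A$, because $\min(x^\alpha)$ is unchanged by killing $x_n$ exactly when $x_n\nmid x^\alpha$, and the Pommaret cones of $x_n$-free terms inside $K[\mathbf x]$ coincide with their cones inside $K[\mathbf x,x_n]$ intersected with $K[\mathbf x]$. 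Finally, since ${J'}_{\geq m}\otimes_K A$ and ${J'}_{\geq m}$ have the same minimal generators and the Pommaret basis depends only on the semigroup structure, $\mathcal P({J'}_{\geq m}\otimes_K A)=\mathcal P({J'}_{\geq m})$, which yields (ii).

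The main obstacle I anticipate is making the last paragraph rigorous: one must verify carefully that the $x_n$-free part of the Pommaret basis of $J_{\geq m}$ really is the \emph{whole} Pommaret basis of $L_{\geq m}$, not merely a subset of the monomial ideal it generates. The subtlety is that Pommaret bases, unlike minimal monomial generating sets, can contain redundant-looking terms forced by the disjointness-of-cones requirement, so I would need to check that (a) every term of $\mathcal P(J_{\geq m})$ not divisible by $x_n$ lies in $L_{\geq m}$ (clear, since setting $x_n=0$ maps $J_{\geq m}$ onto $L_{\geq m}$ and fixes $x_n$-free terms), (b) their Pommaret cones within $K[\mathbf x]$ cover all of $L_{\geq m}$, and (c) these cones are pairwise disjoint. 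Points (a) and (c) are straightforward, but (b) requires the observation that any $x^\gamma\in L_{\geq m}$, viewed in $J_{\geq m}$, lies in the Pommaret cone of some $x^\alpha\in\mathcal P(J_{\geq m})$; since $x^\gamma$ is $x_n$-free and a Pommaret cone containing an $x_n$-free term must be generated by an $x_n$-free term (as $\min(x^\alpha)\geq\min(x^\gamma)$ in the variable order and a variable $x_n$ in $x^\alpha$ would force $x_n\mid x^\gamma$), the generating term $x^\alpha$ is itself $x_n$-free, giving (b). I would present this variable-order bookkeeping as the one genuinely technical point and handle (i) and the reductions in (ii) as routine.
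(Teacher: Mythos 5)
Your proof is correct and follows essentially the same route as the paper's, which is extremely terse (after establishing~(i), the paper simply says ``use the definition of Pommaret basis''). You usefully unpack that step: the three verifications (a)--(c) — that the $x_n$-free terms of $\mathcal P(J_{\geq m})$ lie in $L_{\geq m}$, that their Pommaret cones in $K[\mathbf x]$ cover $L_{\geq m}$, and that these cones remain disjoint — together with the observation that an $x_n$-free term can only lie in the Pommaret cone of an $x_n$-free generator, are precisely what the paper's phrase glosses over; your variable-order bookkeeping is right once you note that it is the \emph{index} of $\min(x^\alpha)$ that is $\leq$ the index of $\min(x^\gamma)$ (equivalently $\min(x^\alpha)\succeq\min(x^\gamma)$ in the $x_0>\cdots>x_n$ ordering). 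Two small citation slips that do not affect correctness: the quasi-stability of $J_{\geq m}$ follows because truncations of quasi-stable ideals are quasi-stable (stated just before Definition~\ref{def:markedscheme}), not from Lemma~\ref{lemma:ss tagliato}, which is the converse implication; and $J$ itself is quasi-stable by Theorem~\ref{th:lifting} directly, not via Corollary~\ref{cor:lifting}.
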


\begin{proof}
Observe that the terms of the Pommaret basis of $J$ are not divisible by $x_n$ \cite[Remark 2.8]{BCR2}. Moreover, recall that in our setting we have $J'\otimes A:=\left(\frac{(J,x_n)}{(x_n)}\right)^\sat$, where the ideal $\frac{(J,x_n)}{(x_n)}=(\mathcal P(J))\cdot A[\mathbf x]$ is quasi-stable and $m$-saturated because $m\geq \rho$. Thus, we obtain ${J'}_{\geq m}\otimes A=\left(\frac{(J,x_n)}{(x_n)}\right)_{\geq m}$ and then use the definition of Pommaret basis. 
\end{proof}

\begin{lemma}\label{lemma:sezioni ideali marcati}
Let $J'\subset K[\mathbf x]$ be a saturated quasi-stable ideal, $J\subset A[\mathbf x,x_n]$ a monomial lifting of $J'$ and $\rho$ the satiety of $(J,x_n)/(x_n)\subset A[\mathbf x]$. For every $m\geq \rho$, if $I\subset A[\mathbf x,x_n]$ belongs to $\underline{\mathrm{Mf}}_{\mathcal P(J_{\geq m-1})}(A)$, then $\left(\frac{(I,x_n)}{(x_n)}\right)_{\geq m}$ belongs to $\underline{\mathrm{Mf}}_{\mathcal P({J'}_{\geq m})}(A)$.
\end{lemma}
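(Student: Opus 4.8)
The plan is to reduce the statement to a computation about supports of polynomials under the substitution $x_n \mapsto 0$, using the description of a marked basis as a direct-sum decomposition. First I would unwind the hypothesis: $I \in \underline{\mathrm{Mf}}_{\mathcal P(J_{\geq m-1})}(A)$ means $I$ is saturated and $A[\mathbf x,x_n] = I_{\geq m-1} \oplus \langle \mathcal N(J_{\geq m-1})\rangle$ as $A$-modules, equivalently $I_{\geq m-1}$ has a $\mathcal P(J_{\geq m-1})$-marked basis $\mathcal G = \{F_\alpha\}$ with head terms forming the Pommaret basis of $J_{\geq m-1}$ and tails supported on $\mathcal N(J_{\geq m-1})$. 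Set $I'' := \frac{(I,x_n)}{(x_n)} \subset A[\mathbf x]$, identified with $\{g(x_0,\dots,x_{n-1},0) : g \in I\}$; I want to show $(I'')_{\geq m}$ is a $\mathcal P({J'}_{\geq m})$-marked basis, i.e.\ $A[\mathbf x] = (I'')_{\geq m} \oplus \langle \mathcal N({J'}_{\geq m})\rangle$.

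The key step is to produce an explicit marked set over $\mathcal P({J'}_{\geq m})$ inside $(I'')_{\geq m}$ from the given $\mathcal G$. By Lemma~\ref{lemma:Pommaret}(ii), the head terms of $\mathcal P({J'}_{\geq m})$ are exactly the terms $x^\alpha \in \mathcal P(J_{\geq m})$ not divisible by $x_n$; since $m \geq \rho > m-1$ is false in general I should be careful, but the truncation/isomorphism machinery of Lemma~\ref{lemma:sottofuntore aperto}(iii) (marked bases over $J_{\geq m-1}$ and $J_{\geq m}$ carry the same data) lets me pass freely between degrees $m-1$ and $m$. For each such $x^\alpha$, take the corresponding $F_\alpha \in \mathcal G$ (in degree $m-1$, or its degree-$m$ multiples) and set $\bar F_\alpha := F_\alpha(x_0,\dots,x_{n-1},0)$. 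Because the terms of the Pommaret basis of $J$ are not divisible by $x_n$ (\cite[Remark 2.8]{BCR2}), the head term $x^\alpha$ survives the substitution, so $\Ht(\bar F_\alpha) = x^\alpha$; and since $\supp(F_\alpha - x^\alpha) \subseteq \mathcal N(J_{\geq m-1})$, killing $x_n$ sends this support into $\mathcal N({J'}_{\geq m-1}) \cdot A[\mathbf x]$, using $\mathcal N(J_{\geq m-1}) \cap K[\mathbf x] \supseteq$ the relevant part of $\mathcal N({J'}_{\geq m-1})$ via Lemma~\ref{lemma:Pommaret}(i). Thus $\{\bar F_\alpha\}$ is a $\mathcal P({J'}_{\geq m})$-marked set contained in $(I'')_{\geq m}$.

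It then remains to upgrade this marked set to a marked basis, i.e.\ to check the direct-sum decomposition $A[\mathbf x] = (I'')_{\geq m} \oplus \langle \mathcal N({J'}_{\geq m})\rangle$. For the sum being everything: any homogeneous $g \in A[\mathbf x]_{\geq m}$ lifts to $\tilde g \in A[\mathbf x,x_n]_{\geq m}$, reduce $\tilde g$ against $\mathcal G$ to land in $\langle \mathcal N(J_{\geq m})\rangle$, then set $x_n = 0$; the image of $\mathcal N(J_{\geq m})$ spans $\langle \mathcal N({J'}_{\geq m})\rangle$ by Lemma~\ref{lemma:Pommaret}. For the intersection being zero: if $g \in (I'')_{\geq m} \cap \langle \mathcal N({J'}_{\geq m})\rangle$, lift a witness to $I_{\geq m}$, subtract a multiple of $x_n$ and reduce mod $\mathcal G$; the surviving part is supported on $\mathcal N(J_{\geq m})$, hence lies in $I_{\geq m} \cap \langle \mathcal N(J_{\geq m})\rangle = 0$, forcing $g = 0$. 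Equivalently — and this is perhaps the cleanest route — one can compute Hilbert functions: $A[\mathbf x,x_n]/I_{\geq m-1}$ is free with basis $\mathcal N(J_{\geq m-1})$, the exact sequence $0 \to (A[\mathbf x,x_n]/(I:x_n))_{t-1} \xrightarrow{\cdot x_n} (A[\mathbf x,x_n]/I)_t \to (A[\mathbf x]/I'')_t \to 0$ together with $x_n$ generic for $I$ (so $(I:x_n)=I$ since $I$ is saturated) gives $h_{A[\mathbf x]/I''}(t) = \Delta h_{A[\mathbf x,x_n]/I}(t) = \Delta h_{A[\mathbf x,x_n]/J}(t) = h_{A[\mathbf x]/{J'}}(t)$ for $t \geq m$, which is exactly $\#\mathcal N({J'}_{\geq m})_t$; a marked set whose quotient has the right Hilbert function over a field-extension is automatically a marked basis, and the argument globalizes by the freeness built into the marked-functor formalism.

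The main obstacle I expect is bookkeeping around the shift between degree $m-1$ and degree $m$ and making the substitution $x_n \mapsto 0$ interact correctly with the Pommaret (not merely monomial) structure — one must verify that $\mathcal P({J'}_{\geq m})$ really is obtained by deleting the $x_n$-divisible generators of $\mathcal P(J_{\geq m})$ and that Pommaret cones are preserved under the substitution, which is where Lemma~\ref{lemma:Pommaret} and \cite[Remark 2.8]{BCR2} do the essential work; the rest is the standard ``marked set $+$ correct Hilbert function $\Rightarrow$ marked basis'' principle applied over the base ring $A$ using the freeness of $A[\mathbf x,x_n]/I_{\geq m-1}$.
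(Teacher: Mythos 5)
Your proposal matches the paper's proof in its essential structure: construct a $\mathcal P(J'_{\geq m})$-marked set inside $\bigl((I,x_n)/(x_n)\bigr)_{\geq m}$ by setting $x_n=0$, compute Hilbert functions via the short exact sequence coming from multiplication by $x_n$ (genericity of $x_n$ giving $(I:x_n)=I$, so $h_{A[\mathbf x]/(I,x_n)}=\Delta h_{A[\mathbf x,x_n]/I}$, which equals $h_{K[\mathbf x]/J'}$ in degrees $\geq m$), and then apply the criterion that a marked set whose quotient has the right Hilbert function is a marked basis (the paper cites \cite[Corollary 4.9]{BCR2}). The one point to tighten is the degree-$m$ piece of the marked set, which your ``pass freely between degrees $m-1$ and $m$'' gestures at but does not pin down: simply setting $x_n=0$ in degree-$(m-1)$ elements of $\mathcal G$, or in raw variable-multiples of them, does not produce polynomials of degree $m$ with tails in $\mathcal N(J'_{\geq m})$; the paper instead takes, for each $x^\alpha\in\mathcal P(J_{\geq m})_m$ with $\alpha_n=0$, the polynomial $g_\alpha=x^\alpha-\Nf(x^\alpha)$ ($J_{\geq m-1}$-normal form modulo $I_{\geq m-1}$) and sets $x_n=0$, combining these with $\{f_\beta\vert_{x_n=0} : f_\beta\in\mathcal G_{\geq m+1}\}$ to get the marked set.
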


\begin{proof}
For every $t\geq m$ we have 
\begin{equation}\label{eq:uguaglianza Hilbert function}
h_{A[\mathbf x,x_n]/(I,x_n)}(t)=h_{A[\mathbf x,x_n]/(J,x_n)}(t)=h_{K[\mathbf x]/J'}(t).
\end{equation}
Indeed, $h_{A[\mathbf x,x_n]/I}(t-1)= h_{A[\mathbf x,x_n]/J}(t-1)$ because $I_{\geq m-1}$ has a $\mathcal P(J_{\geq m-1})$-marked basis by hypothesis; for every $t\geq 0$, we have $h_{A[\mathbf x,x_n]/(I,x_n)}(t)=\Delta h_{A[\mathbf x,x_n]/I}(t)$ and $h_{A[\mathbf x,x_n]/(J,x_n)}(t)=\Delta h_{A[\mathbf x,x_n]/J}(t)$, because $J$ and $I$ are saturated in $A[\mathbf x,x_n]$ and $x_n$ is generic for $J$ and also for $I$, by \cite[Theorem 3.5 and Corollary 3.7]{BCR2}; for every $t\geq m$, we obtain $h_{A[\mathbf x,x_n]/(J,x_n)}(t)=h_{K[\mathbf x]/J'}(t)$, because $J$ is a lifting of $J'$ and then $\left(\frac{(J,x_n)}{(x_n)}\right)_{\geq m}={J'}_{\geq m}\otimes A$ by Lemma \ref{lemma:Pommaret}.

A $\mathcal P(J_{\geq m-1})$-marked set $\mathcal G$ of $I_{\geq m-1}$ is of the following type:
\begin{equation}\label{eq:base marcata Im-1}
\mathcal G =\left \{f_\alpha=x^\alpha +  \sum_{\mathcal N(J)_{\vert \alpha\vert} \ni x^\gamma} C_{\alpha\gamma} x^\gamma \ : \
\Ht(f_\alpha)=x^\alpha\in \mathcal P(J_{\geq m-1})\right\}\subset K[\mathcal C_{J_{\geq m-1}}][\mathbf x,x_n],
\end{equation}
where $\mathcal C_{J_{\geq m-1}}$ is the set of the variables $C_{\alpha\gamma}$. 
Let $\mathcal A_{J_{\geq m-1}}\subseteq K[\mathcal C_{J_{\geq m-1}}]$ be the defining ideal of $\mathrm{Mf}_{\mathcal P(J_{\geq m-1})}$, that is the ideal exactly generated by all the relations which are satisfied by the variables $C_{\alpha\gamma}$ in order to obtain that $\mathcal G$ is a $\mathcal P(J_{\geq m-1})$-marked basis. Indeed, the functor $\underline{\mathrm{Mf}}_{\mathcal P(J_{\geq m-1})}$ is represented by the scheme $\Spec\Bigl({K[\mathcal C_{J_{\geq m-1}}]}/{\mathcal A_{J_{\geq m-1}}}\Bigr)$.
We now construct a $\mathcal P({J'}_{\geq m})$-marked basis for the ideal $\left(\frac{(I,x_n)}{(x_n)}\right)_{\geq m}$ by taking Lemma \ref{lemma:Pommaret}  into account.  

For every $x^\alpha\in \mathcal P(J_{\geq m})_m$ that is not divisible by $x_n$, we consider its $J_{\geq m-1}$-reduced form $\Nf(x^\alpha)$ modulo $I_{\geq m-1}$  and the homogeneous polynomial $g_\alpha:=x^\alpha-\Nf(x^\alpha)\in I_{\geq m-1}$. Observe that $\Nf(x^\alpha)$ exists for properties of marked bases. Then, we take the following set that is a $\mathcal P({J'}_{\geq m})$-marked set by Lemma \ref{lemma:Pommaret} and \cite[Proposition 2.10]{BCR2}
\begin{equation}\label{eq:base marcata Im}
\mathcal G':=\{g_\alpha|_{x_n=0} \ : \ x^\alpha\in \mathcal P(J_{\geq m})_m \text{ and } x^\alpha \text{ not divisible by } x_n \}\cup\{f_\beta|_{x_n=0} \ : \ f_\beta \in \mathcal G_{\geq m+1}\}.
\end{equation}
The polynomials of $\mathcal G'$ belong to $(I,x_n)/(x_n)$. 
By construction, ${\mathcal G'}_m$ is a linearly independent set of polynomials and so $h_{A[\mathbf x,x_n]/(J,x_n)}(m)=h_{A[\mathbf x]/J'}(m)=h_{A[\mathbf x]/(\mathcal G')}(m)$. For every $t\geq m+1$, by induction we suppose $\left((I,x_n)/(x_n)\right)_{t-1}$ is equal to $({\mathcal G'})_{t-1}$. We have $I_{t}=A[\mathbf x, x_n]_1\cdot I_{t-1}+\langle \mathcal G_{t}\rangle$, hence 
\[
\begin{array}{lll}
\left((I,x_n)/(x_n)\right)_{t}&=A[\mathbf x]_1\cdot \left( (I,x_n)/(x_n)\right)_{t-1}+\langle {\mathcal G'}_{t}\rangle & \\
& =A[\mathbf x]_1\cdot (\mathcal G')_{t-1}+\langle {\mathcal G'}_{t}\rangle & =(\mathcal G')_t.
\end{array}
\]
Then, $\mathcal G'$ is a $\mathcal P({J'}_{\geq m})$-marked set and generates $\left(\frac{(I,x_n)}{(x_n)}\right)_{\geq m}$. Moreover, by \cite[Corollary 4.9]{BCR2} $\mathcal G'$ is a $\mathcal P({J'}_{\geq m})$-marked basis because $h_{A[\mathbf x,x_n]/(I,x_n)}(t)=h_{K[\mathbf x]/J'}(t)$ for every $t\geq m$, like observed in \eqref{eq:uguaglianza Hilbert function}. Thus, $\left(\frac{(I,x_n)}{(x_n)}\right)_{\geq m}$ belongs to $\underline{\mathrm{Mf}}_{\mathcal P({J'}_{\geq m})}(A)$.
\end{proof}

\begin{remark}\label{rem:insufficienza marked schemes}
Marked schemes do not give us a characterization of liftings because ${I'}_{\geq m}\in \underline{\mathrm{Mf}}_{\mathcal P({J'}_{\geq m})}(K)$ does not imply $I'\in \underline{\mathrm{Mf}}_{\mathcal P(J')}(K)$ \cite[Example 3.8]{BCR2} \label{ex:BCR2}.
\end{remark}

In the proof of Lemma \ref{lemma:sezioni ideali marcati} we have denoted by $\mathcal A_{J_{\geq m-1}}\subseteq K[\mathcal C_{J_{\geq m-1}}]$ the defining ideal of $\mathrm{Mf}_{\mathcal P(J_{\geq m-1})}$. In the same hypothesis of Lemma \ref{lemma:sezioni ideali marcati}, we can also consider the $\mathcal P({J'}_{\geq m})$-marked set $\mathcal G'$ of $\left(\frac{(I,x_n)}{(x_n)}\right)_{\geq m}$ which is given in formula \eqref{eq:base marcata Im} and which is a marked basis under the conditions posed by the ideal $\mathcal A_{J_{\geq m-1}}$. 

Let $\mathcal F$ be the set of the ${J'}_{\geq m}$-normal forms modulo $I'$ of the polynomials in $\mathcal G'$ (which can be computed by the reduced Gr\"obner basis of $I'$ with respect to degrevlex) and denote by $\mathcal B_{{J'}_{\geq m}}\subseteq K[\mathcal C_{J_{\geq m-1}}]$ the ideal generated by the coefficients of the terms in the polynomials of $\mathcal F$. Thus, $(\mathcal G')=((I,x_n)/(x_n))_{\geq m}$ is contained in $I'$ under the conditions posed by the ideal $\mathcal B_{{J'}_{\geq m}}$. 

In conclusion, the ideal $\mathcal A_{J_{\geq m-1}}+\mathcal B_{{J'}_{\geq m}}$ exactly gives the conditions that $\mathcal G'$ is a marked basis and that $(\mathcal G')=((I,x_n)/(x_n))_{\geq m}$ is contained in $I'$. 

\begin{theorem}\label{th:aperti}
Let $J'\subset K[\mathbf x]$ be a saturated quasi-stable ideal, $J\subset A[\mathbf x,x_n]$ a monomial lifting of $J'$ and $\rho$ the satiety of $(J,x_n)/(x_n)\subset A[\mathbf x]$.  For every $m\geq \rho$, 
the functor $\underline{\mathrm{L}}_{Y,J_{\geq m-1}}^{p(t)}$ is representable by $\Spec\Bigl({K[\mathcal C_{J_{\geq m-1}}]}/{(\mathcal A_{J_{\geq m-1}}+\mathcal B_{{J'}_{\geq m}})}\Bigr)$.
\end{theorem}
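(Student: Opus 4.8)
The plan is to show that the functor $\underline{\mathrm{L}}_{Y,J_{\geq m-1}}^{p(t)}$ is naturally isomorphic to the functor of points of the affine scheme $X:=\Spec\bigl(K[\mathcal C_{J_{\geq m-1}}]/(\mathcal A_{J_{\geq m-1}}+\mathcal B_{{J'}_{\geq m}})\bigr)$, that is, for every Noetherian $K$-algebra $A$ we must give a bijection between $\underline{\mathrm{L}}_{Y,J_{\geq m-1}}^{p(t)}(A)$ and $\mathrm{Hom}_{K\text{-alg}}\bigl(K[\mathcal C_{J_{\geq m-1}}]/(\mathcal A_{J_{\geq m-1}}+\mathcal B_{{J'}_{\geq m}}),A\bigr)$, functorial in $A$. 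First I would recall, via Lemma \ref{lemma:sottofuntore aperto}(i), that an element of $\underline{\mathrm{L}}_{Y,J_{\geq m-1}}^{p(t)}(A)$ is precisely a saturated ideal $I\subset A[\mathbf x,x_n]$ that is a lifting of $I'$ and such that $I_{\geq m-1}$ has a $\mathcal P(J_{\geq m-1})$-marked basis; the latter condition means $I$ corresponds to a point of $\mathrm{Mf}_{\mathcal P(J_{\geq m-1})}$, i.e.\ to a $K$-algebra homomorphism $K[\mathcal C_{J_{\geq m-1}}]/\mathcal A_{J_{\geq m-1}}\to A$ sending the coordinates $C_{\alpha\gamma}$ to the actual coefficients $c_{\alpha\gamma}\in A$ appearing in the marked basis $\mathcal G$ of $I_{\geq m-1}$ as in \eqref{eq:base marcata Im-1}.

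The core of the argument is then to prove that, among such marked-basis ideals $I$, the extra condition ``$I$ is a lifting of $I'$'' is equivalent to the vanishing in $A$ of (the images of) the generators of $\mathcal B_{{J'}_{\geq m}}$. For this I would use Lemma \ref{lemma:sezioni ideali marcati}: the ideal $\bigl((I,x_n)/(x_n)\bigr)_{\geq m}$ automatically belongs to $\underline{\mathrm{Mf}}_{\mathcal P({J'}_{\geq m})}(A)$ and is generated by the explicit $\mathcal P({J'}_{\geq m})$-marked basis $\mathcal G'$ of \eqref{eq:base marcata Im}, whose coefficients are polynomial expressions in the $C_{\alpha\gamma}$. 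By Definition \ref{def:geometric lifting per gli ideali} and Lemma \ref{lemma:sezione}, $I$ is a lifting of $I'$ exactly when $\bigl((I,x_n)/(x_n)\bigr)^{\sat}=I'\otimes_K A$; since both sides are $m$-saturated (the left side by Lemma \ref{lemma:Pommaret}(i) applied to $\mathrm{in}$, the right by $m\geq\rho$) this is equivalent to $\bigl((I,x_n)/(x_n)\bigr)_{\geq m}=\bigl(I'\bigr)_{\geq m}\otimes_K A$, and because the first member is a marked ideal whose Hilbert function already matches that of ${J'}_{\geq m}\otimes_K A$ by \eqref{eq:uguaglianza Hilbert function}, this equality holds if and only if $(\mathcal G')\subseteq I'\otimes_K A$, i.e.\ if and only if the ${J'}_{\geq m}$-normal forms modulo $I'$ of the polynomials of $\mathcal G'$ vanish. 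Those normal forms are exactly the polynomials in $\mathcal F$, so the condition is precisely that the images in $A$ of the generators of $\mathcal B_{{J'}_{\geq m}}$ are zero; equivalently, the homomorphism $K[\mathcal C_{J_{\geq m-1}}]\to A$ factors through $\mathcal A_{J_{\geq m-1}}+\mathcal B_{{J'}_{\geq m}}$.

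Putting these together, the assignment $I\mapsto (C_{\alpha\gamma}\mapsto c_{\alpha\gamma})$ sets up the desired bijection, with inverse sending a homomorphism $\psi$ to the saturation of the ideal generated by $\psi(\mathcal G)$; naturality in $A$ follows since base change $I\mapsto I\otimes_A B$ corresponds, by Proposition \ref{prop:funtore} together with the behaviour of monic marked bases under $K$-algebra morphisms recalled in Section \ref{sec:backgroundMarkedBases} (and the fact that the $C_{\alpha\gamma}$-coordinates transform covariantly), to composing $\psi$ with $\phi\colon A\to B$. The main obstacle I anticipate is the careful bookkeeping needed to guarantee that the equality of graded pieces in degree $\geq m$ really upgrades to equality of saturations — i.e.\ verifying that $m\geq\rho$ forces both $\bigl((I,x_n)/(x_n)\bigr)$ and $I'\otimes_K A$ to be $m$-saturated, so that ``contained in $I'$ in all degrees $\geq m$'' plus ``correct Hilbert function'' yields ``saturation equals $I'$''. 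This is where Lemmas \ref{lemma:Pommaret} and \ref{lemma:sezioni ideali marcati} and Corollary \ref{cor:lifting} (quasi-stability of $\mathrm{in}(I)$, hence genericity of $x_n$ and agreement of the relevant satieties) must be invoked with precision; the remaining steps are essentially the formal unwinding of the definitions of the ideals $\mathcal A_{J_{\geq m-1}}$ and $\mathcal B_{{J'}_{\geq m}}$.
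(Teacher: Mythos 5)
Your proposal is correct and follows essentially the same route as the paper: reduce via Lemma~\ref{lemma:sottofuntore aperto}(i) to ``marked over $\mathcal P(J_{\geq m-1})$ and a lifting of $I'$,'' encode the first condition by $\mathcal A_{J_{\geq m-1}}$, and use Lemma~\ref{lemma:sezioni ideali marcati} together with the explicit family $\mathcal G'$ of~\eqref{eq:base marcata Im} to translate the lifting condition into the vanishing of the generators of $\mathcal B_{J'_{\geq m}}$, with the Hilbert-function equality~\eqref{eq:uguaglianza Hilbert function} upgrading containment to equality. One small inaccuracy: the $m$-saturation of $(I,x_n)/(x_n)$ is not a consequence of Lemma~\ref{lemma:Pommaret}(i) (which only concerns the monomial ideal $J$); it is exactly what Lemma~\ref{lemma:sezioni ideali marcati} together with \cite[Corollary 3.7]{BCR2} provides, which is how the paper phrases it — but since you already invoke Lemma~\ref{lemma:sezioni ideali marcati}, the ingredient is in your argument and this is only a misattribution, not a gap.
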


\begin{proof}
By Lemma \ref{lemma:sottofuntore aperto}(i), a saturated ideal $I$ belongs to $\underline{\mathrm{L}}_{Y,J_{\geq m-1}}^{p(t)}(A)$ if and only if it is a lifting of $I'$ and $I_{\geq m-1}$ has a $\mathcal P(J_{\geq m-1})$-marked basis. 
By Lemma \ref{lemma:sezioni ideali marcati}, if $I\subset A[\mathbf x,x_n]$ is a saturated ideal such that $I_{\geq m-1}$ has a $\mathcal P(J_{\geq m-1})$-marked basis $\mathcal G$, we can assume that $\mathcal G$ is of type  \eqref{eq:base marcata Im-1} under the conditions posed by the ideal $\mathcal A_{J_{\geq m-1}}$. 
Then, the ideal $\mathcal A_{J_{\geq m-1}}+\mathcal B_{{J'}_{\geq m}}$ gives the conditions that the ideal $\left(\frac{(I,x_n)}{(x_n)}\right)_{\geq m}$ has a $\mathcal P({J'}_{\geq m})$-marked basis $\mathcal G'$, like in \eqref{eq:base marcata Im}, so that $(\mathcal G')=((I,x_n)/(x_n))_{\geq m}$ is contained in $I'$. Hence, $\left(\frac{(I,x_n)}{(x_n)}\right)_{\geq m}$ is $m$-saturated, i.e.~$\left(\frac{(I,x_n)}{(x_n)}\right)_{\geq m}=\left(\left(\frac{(I,x_n)}{(x_n)}\right)^\sat\right)_{\geq m}$, by \cite[Corollary 3.7]{BCR2}. Now, we have $\Bigl((I,x_n)/(x_n)\Bigr)^{\sat}=I'$ by Hilbert polynomial arguments, hence $I$ is a lifting of $I'$ and we can conclude.
\end{proof}

In Algorithm \ref{alg:computation in MS} we collect the main steps of the method applied in the proofs of Lemma \ref{lemma:sezioni ideali marcati} and of Theorem \ref{th:aperti} in order to construct the parameter schemes $\mathrm{L}_{Y,J_{\geq m-1}}^{p(t)}$.

\begin{algorithm}[!ht]
\caption{\label{alg:computation in MS} Algorithm for computing the parameter schemes $\mathrm{L}_{Y,J_{\geq m-1}}^{p(t)}$ for the liftings of a saturated homogeneous ideal $I'\subset K[\mathbf x]$ over a Noetherian $K$-algebra $A$ in $\mathrm{Hilb}_{p(t)}^n$.}
\begin{algorithmic}[1]
\STATE $\textsc{LiftingMS}\big(I',p(t)\big)$
\REQUIRE $I'\subset K[\mathbf x]$ a saturated polynomial ideal with $\In(I')$  quasi-stable.
\REQUIRE  $p(t)$ a Hilbert polynomial such that $\Delta p(t)= p_Y(t)$, where $p_Y(t)$ is the Hilbert polynomial of the scheme $Y$ defined by $I'$.
\ENSURE  A set $\mathfrak B$ containing parameter schemes for the liftings of $I'$ in $\mathrm{L}_{Y,J_{\geq m-1}}^{p(t)}$, for every quasi-stable lifting $J\subset \Aox$ of $\In(I')$ with Hilbert polynomial $p(t)$.
\STATE $\mathcal L:=\{ J \subset \Aox \ \vert \ J \text{ quasi-stable lifting of } \In(I') \text{ with Hilbert polynomial } p(t)\}$; 
\STATE $\mathfrak B=\emptyset$;
\FOR{$J \in \mathcal L$}
\STATE $m:=\varrho=\sat((J,x_n)/(x_n))$;
\STATE let $\mathcal A_{J_{\geq m-1}}\subseteq K[\mathcal C_{J_{\geq m}}]$ be the defining ideal of $\mathrm{Mf}_{\mathcal P(J_{\geq m-1})}$;
\STATE let $\mathcal G'$ be the set of formula \eqref{eq:base marcata Im};
\STATE $\mathcal B_{{J'}_{\geq m}}:=(0)$;
\FOR{$g\in \mathcal G'$}
\STATE $\mathcal B_{{J'}_{\geq m}}:=\mathcal B_{{J'}_{\geq m}}+(\mathrm{coeff}(\overline{g}))$, where $\overline{g}$ is the ${J'}_{\geq m}$-normal form of $g$ modulo $I'$; 
\ENDFOR
\STATE $\mathfrak B:=\mathfrak B\cup \{\mathcal A_{J_{\geq m-1}}+\mathcal B_{{J'}_{\geq m}}\}$;
\ENDFOR
\end{algorithmic}
\end{algorithm}

\begin{remark}
Unlike 
the ideal $\mathfrak b_{J\cap K[\mathbf x]}$ in the proof of Theorem \ref{th:parametrizzazione}, the ideal $\mathcal B_{{J'}_{\geq m}}\subseteq K[\mathcal C_{J_{\geq m-1}}]$ 
is generated by polynomials which are not necessarily linear. In order to have linear generators, we could repeat the proof of Theorem \ref{th:parametrizzazione} starting from the $\mathcal P(J_{\geq m})$-marked basis of $I_{\geq m}$ instead of  the $\mathcal P(J_{\geq m-1})$-marked basis of $I_{\geq m-1}$.
However, the number of variables $\mathcal C_{J_{\geq m}}$ is much higher than the number of the variables $\mathcal C_{J_{\geq m-1}}$: indeed, the ideal $\mathcal A_{J_{\geq m-1}}$ can be obtained from $\mathcal A_{J_{\geq m}}$ by an elimination of variables (which is a time-consuming process). This elimination of variables applied on the linear generators of $\mathcal B_{J'_{\geq m}}\subseteq K[\mathcal C_{J_{\geq m-1}}]$ gives generators of higher degree in a smaller number of variables.  If we use the process of the proof of Theorem \ref{th:aperti}, the eliminable variables do not appear from the very beginning, allowing the embedding of $\underline{\mathrm{L}}_{Y,J_{\geq m-1}}^{p(t)}$ in a rather small affine space, without an expensive elimination of variables (see \cite[Section 5.1]{BCLR} for details).
\end{remark}

\begin{theorem}\label{th:funtore di punti}
$\underline{\mathrm{L}}_Y^{p(t)}$ is representable.
\end{theorem}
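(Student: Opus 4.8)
The plan is to represent $\underline{\mathrm{L}}_Y^{p(t)}$ by gluing together the affine schemes $\mathrm{L}_{Y,J_{\geq m-1}}^{p(t)}$ produced in Theorem \ref{th:aperti}, as $J$ ranges over the (finitely many) monomial liftings of $J'=\mathrm{in}(I')$ with Hilbert polynomial $p(t)$ and $m=\rho(J)$ is the associated satiety. The first step is to check that these open subfunctors cover $\underline{\mathrm{L}}_Y^{p(t)}$: given a $K$-algebra $A$ and a lifting $I\in\underline{\mathrm{L}}_Y^{p(t)}(A)$, Corollary \ref{cor:lifting} gives that $\mathrm{in}(I)$ is a quasi-stable lifting of $J'$, and the theory of marked bases over truncations of quasi-stable ideals (see \cite{BCR2,LR2}) guarantees that $I_{\geq m-1}$ admits a $\mathcal P(\mathrm{in}(I)_{\geq m-1})$-marked basis for $m-1$ large enough; since the $\mathrm{Mf}_{\mathcal P(\widetilde J_{\geq m-1})}$ cover $\underline{\mathrm{Hilb}}^n_{p(t)}$ as $\widetilde J$ ranges over quasi-stable ideals with the right Hilbert polynomial (the open cover of the Hilbert scheme by marked schemes), one of the $\underline{\mathrm{L}}_{Y,J_{\geq m-1}}^{p(t)}$ contains $I$. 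By Lemma \ref{lemma:sottofuntore aperto}\eqref{lemma:sottofuntore aperto_ii} each such subfunctor is open in $\underline{\mathrm{L}}_Y^{p(t)}$, so this is genuinely an open covering in the functorial sense.

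The second step is the gluing itself. Since $\underline{\mathrm{L}}_Y^{p(t)}$ is a Zariski sheaf by Proposition \ref{prop:Zariski sheaf}, and it is covered by representable open subfunctors $\underline{\mathrm{L}}_{Y,J_{\geq m-1}}^{p(t)}\cong h_{\mathrm{L}_{Y,J_{\geq m-1}}^{p(t)}}$, the standard machinery for gluing functors of points applies: the pairwise intersections $\underline{\mathrm{L}}_{Y,J_{\geq m-1}}^{p(t)}\cap\underline{\mathrm{L}}_{Y,J'_{\geq m'-1}}^{p(t)}$ are again open subfunctors of each, hence (being open subfunctors of representable sheaves) representable by open subschemes of $\mathrm{L}_{Y,J_{\geq m-1}}^{p(t)}$ and of $\mathrm{L}_{Y,J'_{\geq m'-1}}^{p(t)}$, and the resulting isomorphisms between these open subschemes satisfy the cocycle condition automatically because they all come from the ambient functor. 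Gluing the finitely many affine schemes $\mathrm{L}_{Y,J_{\geq m-1}}^{p(t)}$ along these isomorphisms produces a scheme $\mathrm{L}_Y^{p(t)}$, and the sheaf property of $\underline{\mathrm{L}}_Y^{p(t)}$ yields a natural isomorphism $\underline{\mathrm{L}}_Y^{p(t)}\cong h_{\mathrm{L}_Y^{p(t)}}$ on Noetherian $K$-algebras.

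I would present this mostly by invoking the general gluing lemma for functors (the $\mathbf{Sch}$-valued version of the statement that a Zariski sheaf with an open cover by representable functors is representable; one can cite a reference such as the discussion around \cite[Section VI]{EH} or the marked-scheme gluing already used for the Hilbert scheme in \cite{BCR2,LR2}), so that the content of the proof is: (a) the cover is by open subfunctors — already Lemma \ref{lemma:sottofuntore aperto}; (b) it is a cover — the marked-basis argument via Corollary \ref{cor:lifting}; (c) the pieces are representable — Theorem \ref{th:aperti}; (d) $\underline{\mathrm{L}}_Y^{p(t)}$ is a sheaf — Proposition \ref{prop:Zariski sheaf}. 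One should also remark, as the paper does in Remark \ref{rem:stromme}, that $\mathrm{L}_Y^{p(t)}$ is a subscheme of $\mathrm{Hilb}_{p(t)}^n$ (not necessarily locally closed): each chart sits inside the corresponding marked scheme $\mathrm{Mf}_{\mathcal P(J_{\geq m-1})}$, an open subscheme of the Hilbert scheme, and the charts are compatible there.

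The main obstacle is step (b): verifying that every lifting $I$ over an arbitrary (not necessarily local, not necessarily reduced) Noetherian $K$-algebra $A$ actually lands in one of the finitely many marked-scheme charts. Over a field this is just the statement that $\mathrm{in}(I)$ is one of the listed quasi-stable ideals; over a general $A$ one must know that the marked schemes $\mathrm{Mf}_{\mathcal P(\widetilde J_{\geq m-1})}$ truly cover $\underline{\mathrm{Hilb}}^n_{p(t)}$ as an $A$-valued functor, which is exactly the content of the marked-scheme open-cover theorem for quasi-stable ideals in \cite{BCR2} — and then that an $I$ which is simultaneously a lifting of $I'$ and lies in such a chart lies in the \emph{lifting} subfunctor of that chart, which is Lemma \ref{lemma:sottofuntore aperto}\eqref{lemma:sottofuntore aperto_ii} together with the identification $\underline{\mathrm{L}}_{Y,J_{\geq m-1}}^{p(t)}=\underline{\mathrm{L}}_Y^{p(t)}\cap\underline{\mathrm{Mf}}_{\mathcal P(J_{\geq m-1})}$ from Definition \ref{def:sottofuntori aperti}. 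Once the cover is in hand, the gluing is formal.
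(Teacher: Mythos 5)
Your proof takes essentially the same route as the paper: establish the Zariski sheaf property (Proposition \ref{prop:Zariski sheaf}), openness (Lemma \ref{lemma:sottofuntore aperto}) and representability (Theorem \ref{th:aperti}) of the subfunctors $\underline{\mathrm{L}}_{Y,J_{\geq m-1}}^{p(t)}$, prove they cover, and glue via the criterion of \cite[Theorem VI-14]{EH}. But the way you resolve your ``main obstacle'' (the covering over a general $A$) is off. The general marked-scheme cover of $\underline{\mathrm{Hilb}}^n_{p(t)}$ gives you some quasi-stable $\widetilde J$ with $I\in\underline{\mathrm{Mf}}_{\mathcal P(\widetilde J_{\geq m-1})}(A)$, but nothing forces this $\widetilde J$ to be a lifting of $J'$: the marked scheme over $\widetilde J_{\geq m-1}$ contains ideals with many different initial ideals, so the fact that $\In(I)$ is a lifting of $J'$ (Corollary \ref{cor:lifting}) does not mean $\widetilde J$ is one, and Theorem \ref{th:aperti} only provides representability for the charts indexed by liftings of $J'$. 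The correct argument — already implicit in the first sentence of your step (b), before you switch to the Hilbert-scheme cover — is the paper's: by Theorem \ref{th:dove} the lifting $I$ lies in $\underline{\St}_J(A)$ for a monomial lifting $J$ of $J'$, this $J$ is quasi-stable by Theorem \ref{th:lifting}, and the chain $\underline{\St}_{J}(A)\simeq\underline{\St}_{J_{\geq m-1}}(A)\subseteq\underline{\mathrm{Mf}}_{\mathcal P(J_{\geq m-1})}(A)$ from \eqref{eq:contenuti funtori} then places $I$ in the chart $\underline{\mathrm{L}}_{Y,J_{\geq m-1}}^{p(t)}(A)$ indexed by this same $J$. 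This works uniformly over arbitrary $A$, which is what the paper verifies. (You should also note that \cite[Theorem VI-14]{EH} only requires checking the covering over fields $\mathbb K\supseteq K$, so your worry about non-local, non-reduced $A$ is not actually forced on you by the gluing criterion; the paper proves the stronger statement because the argument via Theorem \ref{th:dove} delivers it for free.)
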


\begin{proof}
We check that $\underline{\mathrm{L}}_Y^{p(t)}$ is a Zariski sheaf and that there are rings $R_i$ and open subfunctors $\alpha_i:h_{R_i}\rightarrow \underline{\mathrm{L}}_Y^{p(t)}$ such that, for every field $\mathbb K\supseteq K$, $\underline{\mathrm{L}}_Y^{p(t)}(\mathbb K)$ is the union of the images of $h_{R_i}(\mathbb K)$ under $\alpha_i$. Indeed, these two conditions imply that $\underline{\mathrm{L}}_Y^{p(t)}$ is isomorphic to the functor of points of a scheme, by \cite[Theorem VI-14]{EH}.

We already observed that $\underline{\mathrm{L}}_Y^{p(t)}$ is a Zariski sheaf (Proposition \ref{prop:Zariski sheaf}). 
For the other condition, due to Theorems \ref{th:aperti} and \ref{th:lifting}, we can consider the finite set of the open subfunctors $\underline{\mathrm{L}}_{Y,J_{\geq m-1}}^{p(t)}$ of the functor $\underline{\mathrm{L}}_Y^{p(t)}$, where $J$ varies among all the quasi-stable ideals in $K[\mathbf x,x_n]$ that are liftings of $J'$ with $p(t)$ as Hilbert polynomial of $A[\mathbf x,x_n]/J$.

By Theorem \ref{th:aperti}, for every of these ideals $J$, the open functor $\underline{\mathrm{L}}_{Y,J_{\geq m-1}}^{p(t)}$ is the functor of points of the ring $R_J:=K[\mathcal C_{J_{\geq m}}]/(\mathcal A_{J_{\geq m-1}}+\mathcal B_{{J'}_{\geq m}})$.

As recalled in Theorem \ref{th:main features}(ii), for every non-negative integer $m$ and every $K$-algebra $A$ we have $\underline{\St}_{J}(A) \ \simeq \ \underline{\St}_{{J}_{\geq m}}(A)$, with the degrevelex order, and hence, 
\begin{equation}\label{eq:contenuti funtori}
\underline{\St}_{J}(A) \ \simeq \ \underline{\St}_{{J}_{\geq m}}(A) \ \subseteq \ \underline{\mathrm{Mf}}_{\mathcal P({J}_{\geq m})}(A).
\end{equation}
Thus, thanks to Theorem \ref{th:dove}, for every $K$-algebra $A$ we obtain $\underline{\mathrm{L}}_Y^{p(t)}(A)=\cup_J \ \underline{\mathrm{L}}_{Y,J_{\geq m-1}}^{p(t)}(A)$. This holds in particular for $A=\mathbb K\supseteq K$.
\end{proof}

\begin{remark}\label{rem:stromme} 
Concerning the proof of Theorem \ref{th:funtore di punti}, we can observe that the open subfunctors $\underline{\mathrm{L}}_{Y,J_{\geq m-1}}^{p(t)}$ form an open covering of the functor $\underline{\mathrm{L}}_Y^{p(t)}$ (e.g.~\cite[Definition 2.15]{Stromme}). Thus, \cite[Proposition 2.16]{Stromme}  clarifies that a scheme defining a functor of points isomorphic to $\underline{\mathrm{L}}_Y^{p(t)}$ can be constructed by means of fiber products of the subfunctors $\underline{\mathrm{L}}_{Y,J_{\geq m-1}}^{p(t)}$.
\end{remark}

In next example we show that there are liftings in a marked scheme over a quasi-stable ideal that do not belong to the Gr\"obner stratum over the same ideal.

\begin{example}\label{ex:esempio bello}
Consider the scheme $Y$ defined by the ideal $I'\subset K[x_0,\dots,x_3]=K[\mathbf x ]$ of Example \ref{ex:un primo calcolo} and the quasi-stable ideal $J^{(2)}=(x_2x_0, x_0^2, x_3x_1x_0, x_2x_1^2, x_1^3, x_1^2x_0)$. 
Following the proof of Lemma \ref{lemma:sezioni ideali marcati} and the proof of Theorem \ref{th:aperti}, we can construct the Noetherian $K$-algebra $K[\mathcal C_{J^{(2)} _{\geq m-1}}]/(\mathcal A_{J^{(2)}_{\geq m-1}}+\mathcal B_{{J'}_{\geq m}})$ defining the functor $\underline{\mathrm{L}}_{Y,J^{(2)}_{\geq m-1}}^{p(t)}$, with $p(t)=t^2+4t+1$, and $m=3$. 
Then, we observe that there are some liftings of $Y$ which belong to $\underline{\mathrm{L}}_{Y,{J^{(2)}}_{\geq m-1}}^{p(t)}(K)$ but do not belong to $\underline{\St}_{{J^{(2)}}_{\geq m-1}}(K)$. For instance, for every $e\in K, e\neq 0$, the ideal
\[
I=(x_2x_0-ex_1^{2}-ex_1x_0,x_0^{2},x_3
x_1^{2}+x_3x_1x_0,x_2x_1^{2},x_1^{3},
x_1^{2}x_0,x_0x_1x_2)
\]
is a lifting of $I'$, it has a marked basis over ${J^{(2)}}_{\geq 2}$, hence it belongs to $\underline{\mathrm{L}}_{Y,{J^{(2)}}_{\geq m-1}}^{p(t)}(K)$, but $\In(I)=J^{(3)}$: this means that $I$ does not belong to $\underline{\St}_{{J^{(2)}}_{\geq m-1}}(K)$. Moreover we observe that the marked basis of $I$ cannot be a Gr\"obner basis with respect to some term order, because  the term $x_1x_0$ is higher than $x_2x_0$ with respect to any term order.
For more information about the computations performed in this example, at the url http://wpage.unina.it/cioffifr/MaterialeCoCoALiftingGeometrico and the interested reader can find lists of generators for the ideals $\mathcal A_{J^{(2)}_{\geq m-1}}$ and $\mathcal B_{{J'}_{\geq m}}$ and a code written for CoCoA-4.7.5 \citep{CoCoA} in order to construct a marked family.
\end{example}

\section*{Acknowledgements}

The investigations that gave rise to this paper began with a study shared with Margherita Roggero, to whom we are grateful for useful suggestions and stimulating discussions. We also thank the anonymous referees for useful comments and suggestions.

The first and second authors are members of GNSAGA (INdAM, Italy). The second author is partially supported by Prin 2015 (2015EYPTSB\_011 - {\em Geometry of Algebraic varieties}, Unit\`{a} locale Tor Vergata, CUP  E82F16003030006).

\bibliographystyle{elsarticle-harv}

\begin{thebibliography}{38}
\expandafter\ifx\csname natexlab\endcsname\relax\def\natexlab#1{#1}\fi
\expandafter\ifx\csname url\endcsname\relax
  \def\url#1{\texttt{#1}}\fi
\expandafter\ifx\csname urlprefix\endcsname\relax\def\urlprefix{URL }\fi

\bibitem[{Abbott et~al.(2009)Abbott, Bigatti, and Robbiano}]{CoCoA}
Abbott, J., Bigatti, A.~M., Robbiano, L., 2009. {CoCoA}: a system for doing
  {C}omputations in {C}ommutative {A}lgebra. Available at
  \texttt{http://cocoa.dima.unige.it}.

\bibitem[{Albert et~al.(2016)Albert, Bertone, Roggero, and Seiler}]{ABRS}
Albert, M., Bertone, C., Roggero, M., Seiler, W.~M., 2016. Marked bases over
  quasi-stable modules. Available at arXiv:1511.03547.

\bibitem[{Bayer et~al.(1993)Bayer, Galligo, and Stillman}]{BGS91}
Bayer, D., Galligo, A., Stillman, M., 1993. Gr\"obner bases and extension of
  scalars. In: Computational algebraic geometry and commutative algebra
  ({C}ortona, 1991). Sympos. Math., XXXIV. Cambridge Univ. Press, Cambridge,
  pp. 198--215.

\bibitem[{Bayer and Stillman(1987)}]{BS}
Bayer, D., Stillman, M., 1987. A criterion for detecting {$m$}-regularity.
  Invent. Math. 87~(1), 1--11.

\bibitem[{Bertone(2015)}]{Bertone}
Bertone, C., 2015. Quasi-stable ideals and {B}orel-fixed ideals with a given
  {H}ilbert polynomial. Appl. Algebra Engrg. Comm. Comput. 26~(6), 507--525.

\bibitem[{Bertone et~al.(2016)Bertone, Cioffi, Guida, and Roggero}]{BCGR}
Bertone, C., Cioffi, F., Guida, M., Roggero, M., 2016. The scheme of liftings
  and applications. J. Pure Appl. Algebra 220~(1), 34--54.

\bibitem[{Bertone et~al.(2013)Bertone, Cioffi, Lella, and Roggero}]{BCLR}
Bertone, C., Cioffi, F., Lella, P., Roggero, M., 2013. Upgraded methods for the
  effective computation of marked schemes on a strongly stable ideal. Journal
  of Symbolic Computation 50, 263--290.

\bibitem[{Bertone et~al.(2017)Bertone, Cioffi, and Roggero}]{BCR2}
Bertone, C., Cioffi, F., Roggero, M., 2017. Macaulay-like marked bases. J.
  Algebra Appl. 16~(5), 1750100, 36.

\bibitem[{Buchsbaum and Eisenbud(1973)}]{BuchEis}
Buchsbaum, D.~A., Eisenbud, D., 1973. On a problem in linear algebra. In:
  Conference on {C}ommutative {A}lgebra ({U}niv. {K}ansas, {L}awrence, {K}an.,
  1972). Springer, Berlin, pp. 50--56. Lecture Notes in Math., Vol. 311.

\bibitem[{Carr{\`a}~Ferro and Robbiano(1990)}]{CFRo}
Carr{\`a}~Ferro, G., Robbiano, L., 1990. On super {G}-bases. J. Pure Appl.
  Algebra 68~(3), 279--292.

\bibitem[{Ceria et~al.(2015)Ceria, Mora, and Roggero}]{CMR13}
Ceria, M., Mora, T., Roggero, M., 2015. Term-ordering free involutive bases. J.
  Symbolic Comput. 68~(part 2), 87--108.

\bibitem[{Ceria and Mora(2017)}]{CM}
Ceria, M., Mora, T., 2017. Buchberger--Weispfenning theory for effective associative rings.
J. Symbolic Comput. 83, 112--146.

\bibitem[{Eisenbud and Harris(2000)}]{EH}
Eisenbud, D., Harris, J., 2000. The {G}eometry of {S}chemes. Vol. 197 of Graduate
  Texts in Mathematics. Springer-Verlag, New York.

\bibitem[{Erd{\"o}s(1956)}]{erdos}
Erd{\"o}s, J., 1956. On the structure of ordered real vector spaces. Publ. Math. Debrecen 4, 334--343.

\bibitem[{Geramita et~al.(1986)Geramita, Gregory, and Roberts}]{GGR}
Geramita, A.~V., Gregory, D., Roberts, L., 1986. Monomial ideals and points in
  projective space. J. Pure Appl. Algebra 40~(1), 33--62.

\bibitem[{Grothendieck(1965)}]{Gro2}
Grothendieck, A., 1965. \'{E}l\'ements de g\'eom\'etrie alg\'ebrique. {IV}.
  \'{E}tude locale des sch\'emas et des morphismes de sch\'emas. {II}. Inst.
  Hautes \'Etudes Sci. Publ. Math.~(24), 231.

\bibitem[{Grothendieck(1966)}]{Gro3}
Grothendieck, A., 1966. \'{E}l\'ements de g\'eom\'etrie alg\'ebrique. {IV}.
  \'{E}tude locale des sch\'emas et des morphismes de sch\'emas. {III}. Inst.
  Hautes \'Etudes Sci. Publ. Math.~(28), 5--255.

\bibitem[{Grothendieck(1995)}]{GroHilb}
Grothendieck, A., 1995. Techniques de construction et th\'eor\`emes d'existence
  en g\'eom\'etrie alg\'ebrique. {IV}. {L}es sch\'emas de {H}ilbert. In:
  S\'eminaire {B}ourbaki, {V}ol.\ 6. Soc. Math. France, Paris, pp. Exp.\ No.\
  221, 249--276.

\bibitem[{Harris(1992)}]{Harris92}
Harris, J., 1992. Algebraic geometry. Vol. 133 of Graduate Texts in
  Mathematics. Springer-Verlag, New York, a first course.

\bibitem[{Hartshorne(1966)}]{H66}
Hartshorne, R., 1966. Connectedness of the {H}ilbert scheme. Inst. Hautes
  \'Etudes Sci. Publ. Math.~(29), 5--48.

\bibitem[{Hartshorne(1977)}]{H}
Hartshorne, R., 1977. Algebraic geometry. Springer-Verlag, New York, graduate
  Texts in Mathematics, No. 52.


\bibitem[{Kleppe(1981)}]{Kleppe}
Kleppe, J.~O., 1981. The {H}ilbert-flag scheme, its properties and its
  connection with the {H}ilbert scheme. {A}pplications to curves in 3-space.
  Ph.D. thesis, University of Oslo.

\bibitem[{Kreuzer and Robbiano(2005)}]{KR2}
Kreuzer, M., Robbiano, L., 2005. Computational commutative algebra. 2.
  Springer-Verlag, Berlin.

\bibitem[{Laudal(1978)}]{Laudal}
Laudal, O.~A., 1978. A generalized trisecant lemma. In: Algebraic geometry
  ({P}roc. {S}ympos., {U}niv. {T}roms\o , {T}roms\o , 1977). Vol. 687 of
  Lecture Notes in Math. Springer, Berlin, pp. 112--149.

\bibitem[{Lederer(2011)}]{Lederer2011}
Lederer, M., 2011. {G}r\"obner strata in the {H}ilbert scheme of points. J. Commut. Algebra 3, 349--404.

\bibitem[{Lella and Roggero(2011)}]{LR}
Lella, P., Roggero, M., 2011. Rational components of {H}ilbert schemes. Rend.
  Semin. Mat. Univ. Padova 126, 11--45.

\bibitem[{Lella and Roggero(2016)}]{LR2}
Lella, P., Roggero, M., 2016. On the functoriality of marked families. J.
  Commut. Algebra 8~(3), 367--410.

\bibitem[{Macaulay(1927)}]{Macaulay}
Macaulay, F.~S., 1927. Some {P}roperties of {E}numeration in the {T}heory of
  {M}odular {S}ystems. Proc. London Math. Soc. (2) 26, 531--555.

\bibitem[{Migliore and Nagel(2000)}]{MiNa}
Migliore, J., Nagel, U., 2000. Lifting monomial ideals. Comm. Algebra 28~(12),
  5679--5701, special issue in honor of Robin Hartshorne.

\bibitem[{M{\"o}ller(1988)}]{M}
M{\"o}ller, H.~M., 1988. On the construction of Gr{\"o}bner bases using syzygies. J. Symbolic Comput. 6~(2-3), 345--359.

\bibitem[{Mumford(1988)}]{Mumford}
Mumford, D., 1988. The red book of varieties and schemes. Vol. 1358 of Lecture
  Notes in Mathematics. Springer-Verlag, Berlin.

\bibitem[{Nitsure(2005)}]{Nit}
Nitsure, N., 2005. Construction of {H}ilbert and {Q}uot schemes. In:
  Fundamental algebraic geometry. Vol. 123 of Math. Surveys Monogr. Amer. Math.
  Soc., Providence, RI, pp. 105--137.

\bibitem[{Reeves(1995)}]{RA}
Reeves, A., 1995. The radius of the {H}ilbert scheme. J. Algebraic Geom. 4~(4),
  639--657.

\bibitem[{Reeves and Sturmfels(1993)}]{RStu}
Reeves, A., Sturmfels, B., 1993. A note on polynomial reduction. J. Symbolic
  Comput. 16~(3), 273--277.

\bibitem[{Roitman(1988)}]{Roitman}
Roitman, M., 1988. On the lifting problem for homogeneous ideals in polynomial
  rings. J. Pure Appl. Algebra 51~(1-2), 205--215.

\bibitem[{Seiler(2009{\natexlab{a}})}]{Seiler2009I}
Seiler, W.~M., 2009{\natexlab{a}}. A combinatorial approach to involution and
  {$\delta$}-regularity. {I}. {I}nvolutive bases in polynomial algebras of
  solvable type. Appl. Algebra Engrg. Comm. Comput. 20~(3-4), 207--259.

\bibitem[{Seiler(2009{\natexlab{b}})}]{Seiler2009II}
Seiler, W.~M., 2009{\natexlab{b}}. A combinatorial approach to involution and
  {$\delta$}-regularity. {II}. {S}tructure analysis of polynomial modules with
  {P}ommaret bases. Appl. Algebra Engrg. Comm. Comput. 20~(3-4), 261--338.

\bibitem[{Strano(1988)}]{Strano}
Strano, R., 1988. A characterization of complete intersection curves in {${\bf
  P}^3$}. Proc. Amer. Math. Soc. 104~(3), 711--715.

\bibitem[{Str{\o}mme(1996)}]{Stromme}
Str{\o}mme, S.~A., 1996. Elementary introduction to representable functors and
  {H}ilbert schemes. In: Parameter spaces ({W}arsaw, 1994). Vol.~36 of Banach
  Center Publ. Polish Acad. Sci. Inst. Math., Warsaw, pp. 179--198.
  
 
\bibitem[{Vistoli(2005)}]{Vistoli}
Vistoli, A., 2005. Grothendieck topologies, fibered categories and descent
              theory. In:
  Fundamental algebraic geometry. Vol. 123 of Math. Surveys Monogr. Amer. Math.
  Soc., Providence, RI, pp. 105--137.

\bibitem[{Zacharias(1978)}]{Z}
Zacharias, G., 1978. Generalized Gr{\"o}bner bases in commutative polynomial rings. 
Bachelor thesis, MIT.

\end{thebibliography}

\end{document}